\newtheorem{thm}{Theorem}[section]
\newtheorem{lem}[thm]{Lemma}
\newtheorem{cor}[thm]{Corollary}
\newtheorem{prop}[thm]{Proposition}
\newtheorem{rem}[thm]{Remark}
\newtheorem{ques}[thm]{Question}
\newtheorem{defn}[thm]{Definition}
\newcommand{\CP}{{\mathbb{CP}}{}^{2}}
\newcommand{\Z}{\mathbb{Z}}
\newcommand{\Mod}{{\rm Mod}}
\newcommand{\red}{\textcolor{red}}
\newcommand{\blue}{\textcolor{blue}}
\begin{document}

\title[Lefschetz pencils and finitely presented groups]
{Lefschetz pencils and finitely presented groups}

\author[R. Kobayashi]{Ryoma Kobayashi}
\address{Department of Mathematics, Faculty of Science and Technology, Tokyo University of Science, Noda, Chiba, 278-8510, Japan}
\email{kobayashi\_ryoma@ma.noda.tus.ac.jp}
\author[N. Monden]{Naoyuki Monden}
\address{Department of Engineering Science, Osaka Electro-Communication University, Hatsu-cho 18-8, Neyagawa, 572-8530, Japan}
\email{monden@isc.osakac.ac.jp}

\begin{abstract}
In this paper, given a finitely presented group $\Gamma$, we provide the explicit monodromy of a Lefschetz fibration with $(-1)$-sections whose total space has fundamental group $\Gamma$ by applying ``twisted substitutions" %(introduced in this paper and \cite{HKM}) 
to that of the Lefschetz fibration constructed by Cadavid and independently Korkmaz. 
Consequently, we obtain an upper bound for the minimum $g$ such that there exists a genus-$g$ Lefschetz pencil on a smooth 4-manifold whose fundamental group is isomorphic to $\Gamma$. 
%In the proof, we give an alternative construction of the monodromy of Gurtas' fibration and a lift of that to the mapping class group of a surface with two boundary components. 
%%For any finitely presented group $\Gamma$, 
%%we provide the explicit monodromy of a Lefschetz fibration with $(-1)$-sections whose total space has fundamental group $\Gamma$ by applying ``twisted substitutions" (introduced in this paper and \cite{HKM}) to that of the Lefschetz fibration constructed by Cadavid and independently Korkmaz. 
%%Consequently, we obtain an upper bound for the minimum $g$ such that there exists a genus-$g$ Lefschetz pencil on a smooth 4-manifold whose fundamental group is isomorphic to $\Gamma$. 
%%%In the proof, we give an alternative construction of the monodromy of Gurtas' fibration and a lift of that to the mapping class group of a surface with two boundary components. 
\end{abstract}

\maketitle

\setcounter{secnumdepth}{2}
\setcounter{section}{0}
\section{Introduction}
From the remarkable works of \cite{Do} and \cite{GS}, a $4$-manifold admits a symplectic structure if and only if it admits a genus-$g$ 
Lefschetz pencil for some $g$. 
This fact naturally raises the following basic question which remains open: 
\begin{ques}[cf. \cite{KS}]\label{Pencil1} 
Given a symplectic 4-manifold, what is the minimal genus $g$ for which it has a genus-$g$ Lefschetz pencil ? 
\end{ques}
%Question~\ref{Pencil1} remains open. 
%It is well known that by the work of Gompf \cite{Gompf}, every finitely presented group is realized as the fundamental group of some closed symplectic 4-manifold. 
Gompf \cite{Gompf} showed that every finitely presented group is realized as the fundamental group of some closed symplectic 4-manifold. 
It immediately follows that for any finitely presented group $\Gamma$, there exists a genus-$g$ Lefschetz pencil on a symplectic 4-manifold with fundamental group $\Gamma$ for some $g$. 
Therefore, it is natural to ask the following weaker version of Question~\ref{Pencil1}: 
\begin{ques}\label{Pencil2} 
%For a finitely presented group $\Gamma$, let $g_P(\Gamma)$ be the minimal genus $g$ for which it has a genus-$g$ Lefschetz pencil on a symplectic 4-manifold with fundamental group $\Gamma$.  
%Given a finitely presented group $\Gamma$, what is $g_P(\Gamma)$?
Given a finitely presentaed group $\Gamma$, what is the minimal genus, denoted by $g_P(\Gamma)$, for which it has a genus-$g$ Lefschetz pencil on a symplectic 4-manifold with fundamental group $\Gamma$? 
\end{ques}
%For a finitely presented group $\Gamma$, we denote by $g_P(\Gamma)$ the minimal genus such that a genus-$g$ Lefschetz pencil on a symplectic 4-manifold $X$ has $\pi_1(X)\cong \Gamma$. 
%This work was intended as an attempt to motivate Question~\ref{Pencil2}. 
Our purpose is to give an upper bound for $g_P(\Gamma)$. %such minimal genus $g$. 
To state our main result, we need to introduce some notation.

\begin{defn}\label{syllable}\rm
Let $\Gamma=\langle x_1,x_2,\ldots,x_n\mid r_1,r_2\ldots,r_k \rangle$ be a finitely presented group with $n$ generators and $k$ relations. 
For $w\in \Gamma$, we define $l(w)$, called the \textit{syllable length} of $w$, to be 
\begin{align*}
l(w)=\min\{s\mid w=x_{i_1}^{m_1}x_{i_2}^{m_2}\cdots x_{i_s}^{m_s}, \ 1\leq i_j\leq n, \ m_j\in\mathbb{Z}\}.
\end{align*}
Define $l=\max\{l(r_i)\mid 1\leq i\leq k\}$. 
If $k=0$, we define $l=1$. 
\end{defn}
Note that $l$ depends on the presentation and that our definition of $l$ differs from that of \cite{Kor2}. 
We always assume that the relators $r_i$ are cyclically reduced. 
%\begin{defn}
%For a finitely presented group $\Gamma$, we define the genus $g_P(\Gamma)$ of $\Gamma$ to be 
%the minimum $g$ such that there exists a genus-$g$ Lefschetz pencil on a smooth 4-manifold $X$ such that $\pi_1(X)$ is isomorphic to $\Gamma$. 
%\end{defn}
Our main result is the following: 
\begin{cor}\label{pencil}
Let $\Gamma$ be a finitely presented group with a presentation in Definition~\ref{syllable}. 
Then, for $g\geq 4(n+l-1)+k$, there exists a genus-$g$ Lefschetz pencil on a closed symplectic 4-manifold $X$ such that $\pi_1(X)$ is isomorphic to $\Gamma$. 
Therefore, $g_P(\Gamma)\leq 4(n+l-1)+k$. 
\end{cor}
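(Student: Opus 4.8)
The plan is to deduce the corollary from a monodromy construction: for each $g \geq 4(n+l-1)+k$ I would produce a genus-$g$ Lefschetz fibration $f\colon Y \to S^2$ whose total space satisfies $\pi_1(Y) \cong \Gamma$ and which carries a $(-1)$-section, and then pass to a pencil by blowing that section down. First I would take as a starting point the genus-$g$ Lefschetz fibration of Cadavid and Korkmaz, whose monodromy is an explicit positive factorization of the identity in the mapping class group $\Mod(\Sigma_g)$ and which is known to admit $(-1)$-sections. The role of the twisted substitutions is to replace certain Dehn twists in this factorization by conjugated ones so that the word stays a product of right-handed Dehn twists equal to the identity --- hence still defines an honest Lefschetz fibration --- while the new vanishing cycles are tailored to encode the generators and relators of $\Gamma$.

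The fundamental-group computation is the heart of the argument. For a Lefschetz fibration over $S^2$, $\pi_1(Y) \cong \pi_1(\Sigma_g)/N$, where $N$ is the normal closure of the vanishing cycles and $\pi_1(\Sigma_g) = \langle a_1,b_1,\ldots,a_g,b_g \mid \prod_{i=1}^g [a_i,b_i]\rangle$. I would arrange the twisted substitutions so that, after killing the vanishing cycles, $n$ of the standard generators survive and become $x_1,\ldots,x_n$, the relators $r_1,\ldots,r_k$ are read off from $k$ further vanishing cycles, and all remaining handles are trivialised. Tracking how many handles each of these tasks consumes --- roughly four per generator, an extra contribution governed by the maximal syllable length $l$ of the relators, and one per relation --- is exactly what produces the threshold $g \geq 4(n+l-1)+k$, and one must check that the construction can be carried out for every $g$ above it, by absorbing the excess genus into trivially cancelling twists.

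Granting such a fibration, the corollary follows by standard $4$-manifold topology. By the Gompf--Thurston theorem, since $f$ has a section the fiber class is homologically essential ($[F]\cdot[S]=1\neq 0$), so $Y$ admits a symplectic form for which the fibers and the $(-1)$-section are symplectic; the section is then a symplectic $(-1)$-sphere. Blowing it down yields a closed symplectic $4$-manifold $X$ together with a genus-$g$ Lefschetz pencil whose base point is the image of the section. Because a blow-up is a connected sum with the simply connected $\CPb$, we have $\pi_1(X) \cong \pi_1(Y) \cong \Gamma$. Hence $X$ is a closed symplectic $4$-manifold carrying a genus-$g$ pencil with $\pi_1(X)\cong\Gamma$, which gives $g_P(\Gamma) \leq 4(n+l-1)+k$.

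The main obstacle is the monodromy construction itself rather than the blow-down. Two requirements must be controlled simultaneously and are in tension: the word must remain a positive factorization of the identity (so that twisted substitution is a legal move and the fibration genuinely exists), and the normal closure of the resulting vanishing cycles must be \emph{exactly} the right subgroup, producing $\Gamma$ on the nose and not merely a quotient or an extension of it. I expect the delicate bookkeeping to be (i) verifying that the twisted substitutions preserve a $(-1)$-section --- which is what makes the blow-down to a pencil possible --- and (ii) confirming the sharp genus count $4(n+l-1)+k$, in particular the appearance of the syllable length $l$, which reflects the cost of realising each relator as a product of vanishing cycles on the fiber.
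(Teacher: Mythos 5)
Your proposal is correct and follows essentially the same route as the paper: the corollary is deduced from the genus-$g$ Lefschetz fibration with disjoint $(-1)$-sections and $\pi_1\cong\Gamma$ of Theorem~\ref{main} (built by twisted substitutions on the Cadavid--Korkmaz word $W_2^g$), by blowing down one $(-1)$-section to obtain the pencil. Your added remarks on symplecticity via Gompf--Thurston and the invariance of $\pi_1$ under blow-down are standard and consistent with the paper's (terser) argument.
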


By blowing up the base locus of a genus-$g$ Lefschetz pencil, we obtain a genus-$g$ Lefschetz fibration over $S^2$, especially with $(-1)$-sections. 
Therefore, the above Gompf's result can be expressed in terms of Lefschetz fibrations. 
%Since blowing up has no effect on the fundamental groups of 4-manifolds, Gompf's result can be expressed in terms of Lefschetz fibrations. 
That is, for any finitely presented group $\Gamma$, there exists a Lefschetz fibration over $S^2$ (with $(-1)$-sections) such that the fundamental group of the total space is $\Gamma$. 
Note that, for $g\geq 2$, the total space of a genus-$g$ Lefschetz fibration %, which is not necessary to have $(-1)$-sections,
is symplectic (cf. \cite{GS}).

The article \cite{ABKP} gave another construction of a Lefschetz fibration over $S^2$ whose total space has the given fundamental group. 
In the construction, the genus and the monodromy of the Lefschetz fibration are implicit. 
%The explicit monodromies of Lefschetz fibrations with a prescribed fundamental group were given by Korkmaz \cite{Kor2} by twisted fiber summing of some copies of the genus-$g$ Lefschetz fibration constructed by Matsumoto \cite{Ma} $(g=2)$, Cadavid \cite{Ca} and independently Korkmaz \cite{Kor} $(g\geq 3)$. 
%The explicit monodromies of Lefschetz fibrations with a prescribed fundamental group were given by Korkmaz \cite{Kor2} by using twisted fiber sum operations. %ming of some copies of the genus-$g$ Lefschetz fibration constructed in \cite{Ma} $(g=2)$, \cite{Ca} and independently Korkmaz \cite{Kor} $(g\geq 3)$. 
The explicit monodromies of such Lefschetz fibrations were given by Korkmaz \cite{Kor2} by using twisted fiber sum operations. %ming of some copies of the genus-$g$ Lefschetz fibration constructed in \cite{Ma} $(g=2)$, \cite{Ca} and independently Korkmaz \cite{Kor} $(g\geq 3)$. 
%Recently, Akhemdov and Ozbagci \cite{AO} gave new construction of such Lefschetz fibrations 
%The article 
Akhemdov and Ozbagci \cite{AO} gave new construction of such Lefschetz fibrations 
by performing Luttinger surgeries and knot surgeries on the symplectic sum of certain symplectic $4$-manifolds. 
The first author \cite{Kob} improved the result of \cite{Kor2} by using twisted fiber sum operations.

The examples in \cite{Kor2}, \cite{AO} and \cite{Kob} have no $(-1)$-sections. 
Another purpose of this paper is to provide explicit monodromies of Lefschetz fibrations admitting a $(-1)$-section with a prescribed fundamental group. 
%In this paper, we give the monodromy of such a Lefschetz fibration %over $S^2$ with $(-1)$-sections with a prescribed fundamental group 
%In the notation of Definition~\ref{syllable}, our main result is the following: 
We prove the following: 
\begin{thm}\label{main}
Let $\Gamma$ be a finitely presented group with a presentation in Definition~\ref{syllable}. 
Then, for $g\geq 4(n+l-1)+k$, there exists a genus-$g$ Lefschetz fibration $X\rightarrow S^2$ with two disjoint $(-1)$-sections such that $\pi_1(X)$ is isomorphic to $\Gamma$.
\end{thm}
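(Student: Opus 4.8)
The plan is to begin from the genus-$g$ Lefschetz fibration of Cadavid and Korkmaz and to deform its monodromy by a sequence of \emph{twisted substitutions}, keeping simultaneous control of the fundamental group of the total space and of the two sections. First I would record the starting data. The Cadavid--Korkmaz fibration arises by blowing up the base locus of a genus-$g$ Lefschetz pencil, so its monodromy is an explicit positive factorization $W=1$ in $\Mod(\Sigma_g)$ whose vanishing cycles are the curves of a chain $c_1,\dots,c_{2g+1}$ together with a bounded number of auxiliary curves, and it carries (at least) two disjoint $(-1)$-sections coming from base points of the pencil. Since $\pi_1(X)=\pi_1(\Sigma_g)/N$, where $N$ is the normal closure of the vanishing cycles inside $\pi_1(\Sigma_g)=\langle a_1,b_1,\dots,a_g,b_g\mid \prod_i[a_i,b_i]\rangle$, and the chain curves normally generate $\pi_1(\Sigma_g)$, the starting total space is simply connected. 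I would lift the whole discussion to $\Mod(\Sigma_{g,2})$, the mapping class group of a genus-$g$ surface with two boundary components, so that the two $(-1)$-sections are encoded by the two boundary Dehn twists and can be monitored at every step.

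The engine of the argument is the twisted substitution mechanism: whenever a product of right-handed Dehn twists $P$ appearing as a consecutive subword of the factorization equals, in $\Mod(\Sigma_{g,2})$, another such product $Q$ (for instance via a lantern or chain relation), and $\phi$ is any mapping class, one may replace the subword $\phi(P)$ by $\phi(Q)$ without changing the total product, hence obtain a new genus-$g$ Lefschetz fibration. I would choose every relation $P=Q$ and every conjugating element $\phi$ to be supported in a subsurface disjoint from collar neighborhoods of the two boundary components; this guarantees that both boundary twists survive untouched, so the two disjoint $(-1)$-sections persist through the entire deformation. The effect on $\pi_1$ is completely controlled, since each substitution merely exchanges one finite set of homotopy classes of vanishing cycles for another.

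With this tool in hand I would prescribe the substitutions so that the final collection of vanishing cycles, read in $\pi_1(\Sigma_g)$, does exactly three things: it kills all the standard generators except $n$ distinguished ones $a_{i_1},\dots,a_{i_n}$, which become the generators $x_1,\dots,x_n$ of $\Gamma$; it produces, for each relator $r_j$, a vanishing cycle whose class equals $r_j(x_1,\dots,x_n)$; and it imposes no further relations, after which a Van Kampen computation gives $\pi_1(X)\cong\Gamma$. In this accounting the $n$ surviving and killed generators are what cost the $4n$ part of the bound, the maximal syllable length $l$ governs how many handles are needed to ``draw'' the relator words (the $4(l-1)$ part), and each additional relator contributes one further critical point and handle (the $+k$ part), so the total genus stays at $4(n+l-1)+k$.

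I expect the main obstacle to be precisely this bookkeeping: engineering twisted substitutions whose vanishing cycles spell out the relators $r_j$ on the nose while simultaneously (i) remaining inside subsurfaces that avoid the two sections and (ii) not overspending handles, so that a single band of roughly $4l$ handles can be reused for all relators and only $k$ extra handles are added in total. The final checks are that the two $(-1)$-sections are genuinely disjoint after all substitutions, and that blowing them down recovers a genus-$g$ pencil on a symplectic $4$-manifold, which is what yields Corollary~\ref{pencil}.
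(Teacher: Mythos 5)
Your overall strategy---start from the Cadavid--Korkmaz relator lifted to $\Mod_g^2$, apply twisted substitutions, keep the two boundary twists untouched so the $(-1)$-sections persist, and arrange the vanishing cycles so that Lemma~\ref{lem2} yields $\Gamma$---is exactly the paper's, and your genus accounting ($4n$ for generators, $4(l-1)$ for drawing relators, $+k$ for the relators) matches. But there are genuine gaps. First, you misidentify the starting fibration: the vanishing cycles of $W_2^g$ are the curves $B_0,\ldots,B_g$ and $c_r$ (resp.\ $a_{r+1},a_{r+1}'$), not a chain $c_1,\ldots,c_{2g+1}$, and the total space is \emph{not} simply connected --- its fundamental group is (essentially) the genus-$r$ surface group. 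This is not a cosmetic error: the substitutions used here replace the two occurrences of $t_{c_r}$ by longer positive words while $c_r=1$ already follows from $B_i=1$, so the normal closure of the vanishing cycles only grows and $\pi_1$ of the new total space is necessarily a \emph{quotient} of $\pi_1$ of the old one. Starting from a simply connected total space, no such sequence of substitutions could ever produce a nontrivial $\Gamma$; the construction works precisely because $\pi_1(\Sigma_r)$ surjects onto any $n$-generated group once $r\ge n$.

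Second, the proposal never produces the objects that make the bookkeeping possible, and these are the actual content of the proof. (i) You need a relator in $\Mod_g^2$ whose negative part is $t_{c_r}$ (resp.\ $t_{a_{r+1}}t_{a_{r+1}'}$) and whose positive part consists of curves ($D_j$, $A_j$, $E_{h_1}$, $a_r$, $a_r'$) with computable images in $\pi_1(\Sigma_g)$; this is the lift of Gurtas' positive relator to the two-holed surface (Propositions~\ref{gurtasrelation} and~\ref{gurtasx}, proved via Hamada's four-holed relation and chain relations), and without it there is nothing to substitute. Note also that the condition on the twisting class $\phi$ is that it fix the curves being replaced ($\phi(c_r)=c_r$), not that it be supported away from the boundary. (ii) The relators are not realized by vanishing cycles ``spelling out $r_j$ on the nose''; they arise as the \emph{difference} between a twisted and an untwisted vanishing cycle, e.g.\ $\psi_1(a_{2h_1+h_2-1})=a_{2h_1+h_2-1}R_1^{\pm1}$, so that $R_1=1$ once $a_{2h_1+h_2-1}=1$ is already known. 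This requires the explicit simple loops $R_i$ of Proposition~\ref{prop5.1}, realizing each $r_i$ on a surface whose genus is controlled by the syllable length and meeting the relevant curves in exactly one point --- a construction you defer but which is where the parameter $l$ actually enters. (iii) Finally, one must verify that no unwanted relations survive, which in the paper is the explicit presentation computation of Lemma~\ref{pi1} and Proposition~\ref{free} using the formulas \eqref{B2k-1}--\eqref{E}; ``imposes no further relations'' is an assertion, not an argument.
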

We give the monodoromies of our fibrations by using ``twisted substitution" technique introduced in this paper and \cite{HKM}. 
In order to use the technique, we construct a certain relation in the mapping class group of a surface with two boundary components. 
This gives a lift of the monodromy of Gurtas's fibration (see \cite{Gur}) to the mapping class group of a surface with two boundary components. 
%As a corollary to Theorem~\ref{main}, 
By blowing down one of the disjoint $(-1)$-sections of our fibrations, we obtain Corollary~\ref{pencil}.

We would like to emphasize that fiber sum operations and symplectic sum operations can not be effectively utilized to construct Lefschetz fibrations with $(-1)$-sections. 
In fact, it is well-known that the fiber sum of Lefschetz fibrations has no $(-1)$-sections from the work of \cite{St3} (see also \cite{Sm4}). 
More strongly, Usher \cite{Us} (and see also \cite{Sato2}) showed that the symplectic sum of symplectic $4$-manifolds is minimal, that is, it does not contain any $(-1)$-spheres. 
Moreover, Luttinger surgery and knot surgery preserve minimality of symplectic 4-manifolds from Usher's result. 
For this reason, the fibrations in \cite{Kor2}, \cite{AO} and \cite{Kob} have no $(-1)$-sections. 
We note that, in general, it is difficult to compute the genus of a Lefschetz pencil on the total space of a given Lefschetz fibration without a $(-1)$-section.

%Since for every finitely presented group $\Gamma$, there is a Lefschetz pencil a smooth 4-manifold $X$ such that $\pi_1(X)=\Gamma$, 
%the number $g_P(\Gamma)$ always exists and is a nonnegative integer. 

Here is an outline of this paper. 
In Section~\ref{notation}, we fix the notations. 
In Section~\ref{mapping}, we introduce a twisted substitution technique for constructing a new word in mapping class groups and the relator in mapping class groups constructed by Korkmaz. 
Section~\ref{Lefschetz} reviews some standard facts on Lefschetz fibrations and pencils. 
In Section~\ref{proofmain}, we prove the main results. 
In the last section, we give an alternative construction of the monodromy of Gurtas' fibration and provide a lift of that to the mapping class group of a surface with two boundary components. 
In Appendix A, we introduce the construction of a loop which is needed for the proof of Theorem~\ref{main}.

\noindent \textit{Acknowledgments.} 
The authors would like to thank Susumu Hirose for his comments on this paper. 
The second author was supported by Grant-in-Aid for Young Scientists (B) (No. 13276356), Japan Society for the Promotion of Science.

% ==========================================================================================================

\section{Notation}\label{notation}

\

Let $\Sigma_g$ be the closed oriented surface of genus $g$ standardly embedded in the 3-space as shown in Figure~\ref{fundamental}. 
We will use the symbols $a_1,b_1,\ldots,a_g,b_g$ to denote the standard generators of the fundamental group $\pi_1(\Sigma_g)$ of $\Sigma_g$ as shown in Figure~\ref{fundamental}. 
For $a$ and $b$ in $\pi_1(\Sigma_g)$, the notation $ab$ means that we first apply $a$ then $b$. 
\begin{figure}[hbt]
 \centering
     \includegraphics[width=9cm]{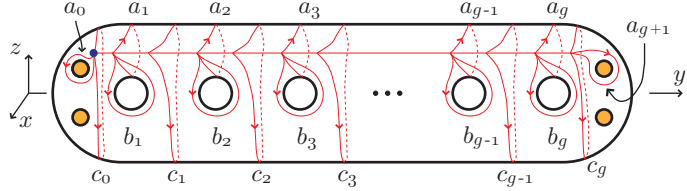}
     \caption{Generators $a_j,b_j$ of the fundamental group and loops $c_j$.}
     \label{fundamental}
\end{figure}

Let $c_0,c_1,c_2\ldots,c_g,a_0,a_{g+1}$ be the simple loops in $\Sigma_g$ as shown in Figure~\ref{fundamental}, and note that in $\pi_1(\Sigma_g)$, up to conjugation, 
\begin{align}
&c_i=b_i^{-1}\cdots b_1^{-1}(a_1b_1a_1^{-1})\cdots (a_ib_ia_i^{-1}); \label{c_i}\\
&c_0=c_g=1;\label{c_g}\\
&a_0=a_{g+1}=1\label{a_{g+1}}
\end{align}
for each $1\leq i\leq g$. 
Then, the fundamental group $\pi_1(\Sigma_g)$ has the following presentation:
\begin{align*}
\pi_1(\Sigma_g)=\langle a_1,b_1,\ldots, a_g,b_g \mid c_g\rangle.  
\end{align*}

Let $B_0,B_1,B_2,\ldots, B_g$, $a_1^\prime,\ldots,a_g^\prime$ be the simple closed curves in $\Sigma_g$ as shown in Figure~\ref{CK}. 
Suppose that $g=2r$. 
Then, it is easy to check that up to conjugation, the following equalities hold in $\pi_1(\Sigma_g)$: 
\begin{align}
%&B_0=b_1b_2\cdots b_g, \label{B0}\\
&B_{2k-1}=a_kb_kb_{k+1} \cdots b_{g+1-k}c_{g+1-k}a_{g+1-k} &&\mathrm{for} \ \ 1\leq k\leq r; \label{B2k-1}\\
&B_{2k}=a_kb_{k+1}b_{k+2} \cdots b_{g-k}c_{g-k}a_{g+1-k} &&\mathrm{for} \ \ 0\leq k\leq r; \label{B2k}\\
&a_{k+1}^\prime=c_ka_{k+1} &&\mathrm{for} \ 0\leq k\leq g-1, \label{aprime}
\end{align}
If $g=2r+1$, then $B_{2k-1}$ satisfies the equality (\ref{B2k-1}) for $1\leq k\leq r+1$.

Let $A_1\ldots,A_{2g+1}$ be the simple closed curves on $\Sigma_g$ as shown in Figure~\ref{gurtas1curve}. 
It is easily seen that, up to conjugation, the following equalities hold in $\pi_1(\Sigma_g)$:
\begin{align}
&A_{2k}=b_k &&\mathrm{for} \ \ 1\leq k\leq g; \label{E2k} \\
&A_{2k+1}=a_ka_{k+1}^{-1} &&\mathrm{for} \ \ 0\leq k\leq g. \label{E2k-1}
\end{align}

\begin{figure}[hbt]
 \centering
      \includegraphics[width=8cm]{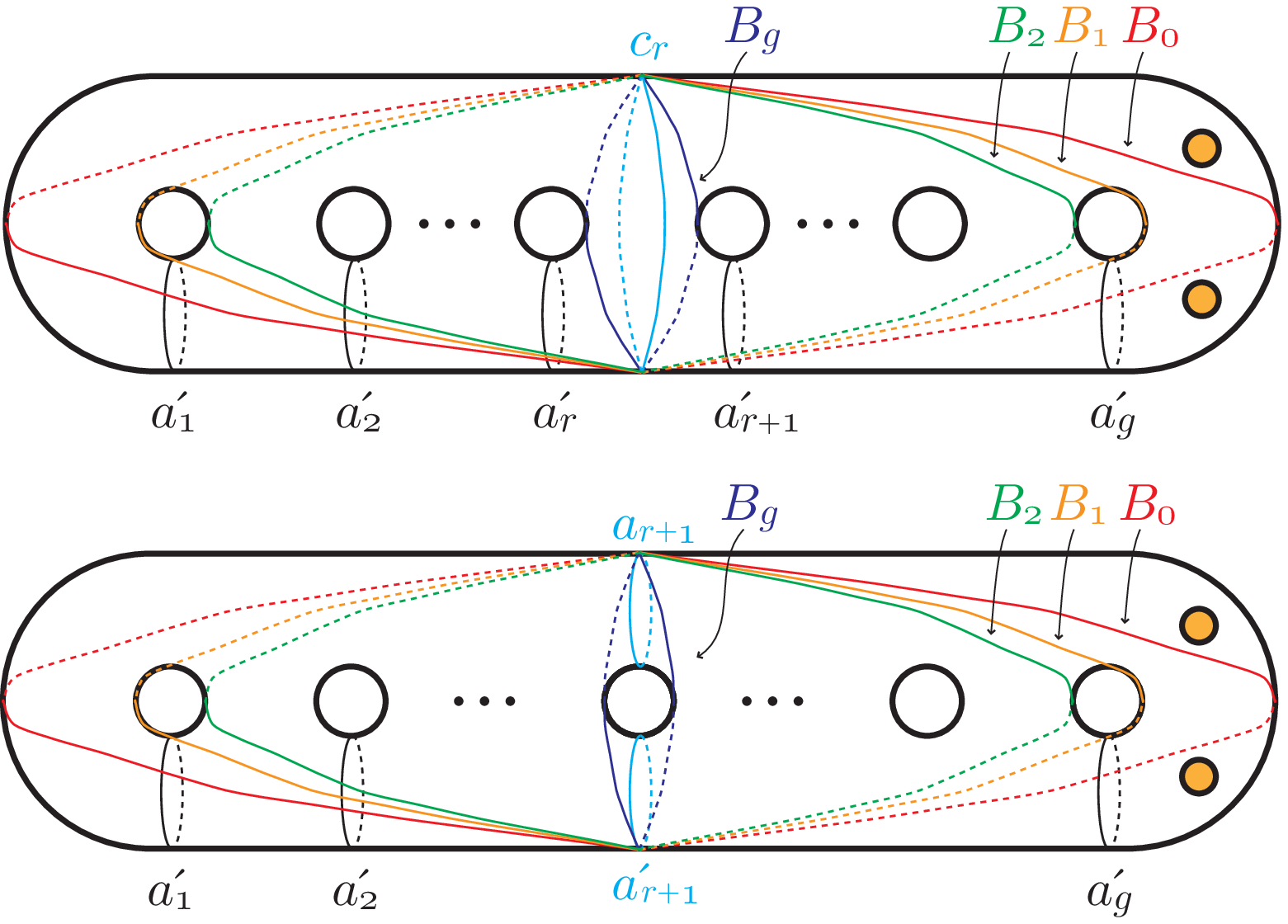}
      \caption{The curves $B_0, B_1, B_2, \ldots, B_g, a_1^\prime, \ldots, a_g^\prime$.}
      \label{CK}
 \end{figure}

Moreover, when we denote by $D_0,D_1,D_2,\ldots,D_{2h_1}$ and $E_{h_1}$ the simple closed curves on $\Sigma_g$ as shown in Figure~\ref{gurtas1curve}, 
it is immediate that, up to conjugation, the following equalities hold in $\pi_1(\Sigma_g)$:
\begin{align}
&D_0=b_1b_2\cdots b_{2h_1}a_{2h_1+1}^{-1}; \label{D0}\\
&D_{2k-1}=a_kb_kb_{k+1} \cdots b_{2h_1+1-k}c_{2h_1+1-k}a_{2h_1+1-k}a_{2h_1+1}^{-1} &&\mathrm{for} \ \ 1\leq k\leq h_1; \label{D2k-1}\\
&D_{2k}=a_kb_{k+1}b_{k+2} \cdots b_{2h_1-k}c_{2h_1-k}a_{2h_1+1-k}a_{2h_1+1}^{-1} &&\mathrm{for} \ \ 1\leq k\leq h_1; \label{D2k}\\
&E_{h_1}=c_{h_1}a_{2h_1+1}. \label{E}
\end{align}
Note that we can modify $\Sigma_g$ and $D_0,D_1,D_2,\ldots,D_{2h_1}$ and $E_{h_1}$ by isotopy as shown in Figure~\ref{gurtas2curve}. 
\begin{figure}[hbt]
 \centering
      \includegraphics[width=12cm]{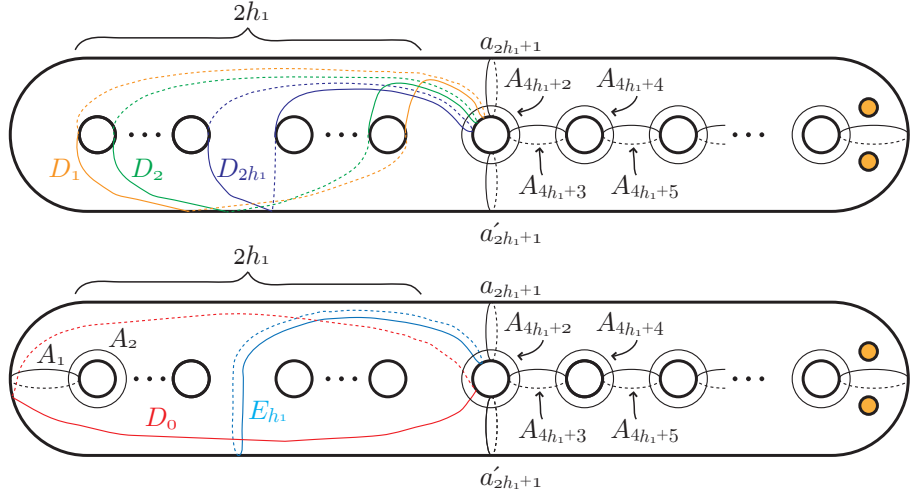}
      \caption{The curves $A_1,A_2,\ldots,A_{2g+1}$, $D_0, D_1, \ldots, D_{2h_1}$ and $E_{h_1}$.}
      \label{gurtas1curve}
 \end{figure}
\begin{figure}[hbt]
 \centering
      \includegraphics[width=8cm]{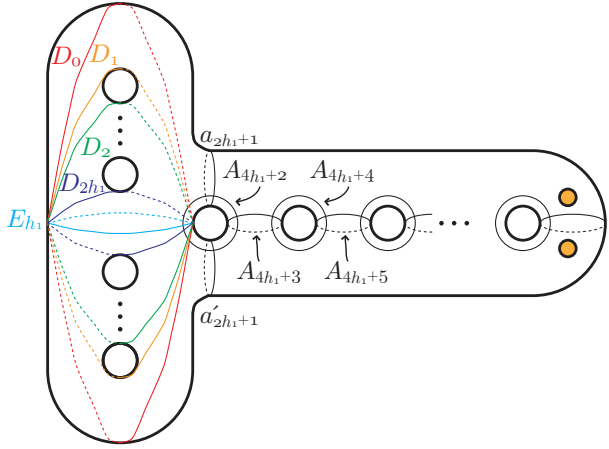}
      \caption{Modified surface $\Sigma_g$ and modified curves $D_0, D_1, \ldots, D_{2h_1}$ and $E_{h_1}$.}
      \label{gurtas2curve}
 \end{figure}

Throughout this paper, we use the same symbol for a loop and its homotopy class. 
Similarly, we use the same symbol for a diffeomorphism and its isotopy class, or a simple closed curve and its isotopy class. 
A simple loop and a simple closed curve will even be denoted by the same symbol. 
It will cause no confusion as it will be clear from the context which one we mean.

\section{Mapping class groups}\label{mapping}
\subsection{Twisted substitutions}\label{MCG}

\

Let $\Sigma_g^b$ be a compact oriented surface of genus $g$ with $b$ boundary components. 
The \textit{mapping class group} of $\Sigma_g^b$, denoted by $\mathrm{Mod}_g^b$, is 
the group of isotopy classes of orientation preserving self-diffeomorphisms of $\Sigma_g^b$. 
We assume that diffeomorphisms and isotopies fix the points of the boundary. 
To simplify notation, we write $\Sigma_g = \Sigma_g^0$ and $\mathrm{Mod}_g = \mathrm{Mod}_g^0$. 
For $\phi_1$ and $\phi_2$ in $\mathrm{Mod}_g^b$, the notation $\phi_1\phi_2$ means that we first apply $\phi_2$ then $\phi_1$ (Our notation differs from that of \cite{Kor2}).

For a simple closed curve $c$ on $\Sigma_g^b$, by cutting $\Sigma_g^b$ along $c$ and gluing the two boundary curves back after twisting of 
the sides to the right by $360^{\circ}$, we obtain a diffeomorphism $\Sigma_g^b\to\Sigma_g^b$. 
The isotopy class of such a diffeomorphism is called the \textit{right-handed Dehn twist} along $c$, denoted by $t_c$. 
Note that $t_{\phi(c)}=\phi t_c\phi^{-1}$ for an element $\phi$ in $\Mod_g^b$ and $t_ct_d=t_dt_c$ if $c$ is disjoint from $d$.

\begin{defn}\rm
A word $\varrho : = t_{c_1}^{\epsilon_1} t_{c_2}^{\epsilon_2} \cdots t_{c_n}^{\epsilon_n}$ in $\mathrm{Mod}_g^b$ is a \textit{relator} 
if $\varrho$ satisfies $\varrho=1$, where $\epsilon_i=\pm 1$. 
If $\epsilon_1=\cdots=\epsilon_n=1$, then $\varrho$ is called the \textit{positive relator}. 
\end{defn}

We introduce a main technique to construct new product of right-handed Dehn twists in $\mathrm{Mod}_g^b$ from old ones, called a \textit{twisted substitution}. 
This technique is a generalization of a substitution technique introduced by Fuller and is also introduced in \cite{HKM}.

\begin{defn}\rm
Let $\eta$ be the following relator in $\mathrm{Mod}_g^b$:
\begin{align*}
\eta : = \ t_{c_1} t_{c_2} \cdots t_{c_k} \cdot t_{d_l}^{-1} \cdots t_{d_2}^{-1} t_{d_1}^{-1} \ (=1). 
\end{align*}
For an element $\phi$ in $\mathrm{Mod}_g^b$ satisfying $\phi(d_i)=d_i$, by the relation $t_{\phi(c)}=\phi t_c\phi^{-1}$, 
we obtain the following relator in $\mathrm{Mod}_g^b$, denoted by $\eta^\phi$:
\begin{align*}
\eta^\phi : = \ t_{\phi(c_1)} t_{\phi(c_2)} \cdots t_{\phi(c_k)} \cdot t_{d_l}^{-1} \cdots t_{d_2}^{-1} t_{d_1}^{-1} \ (=1). 
\end{align*}
Let us denote by $\varrho$ a product of right-handed Dehn twists which includes $t_{d_1} \cdots t_{d_l}$ as a subword: 
\begin{align*}
\varrho = U \cdot t_{d_1} t_{d_2} \cdots t_{d_l} \cdot V, 
\end{align*}
where $U$ and $V$ are products of right handed Dehn twists. 
Then, we get a new product $\varrho^\prime$ of right handed Dehn twists 
\begin{align*}
\varrho^\prime := U \cdot t_{\phi(c_1)} t_{\phi(c_2)} \cdots t_{\phi(c_k)} \cdot V \ ( = U \cdot \eta^\phi \cdot t_{d_1} t_{d_2} \cdots t_{d_l} \cdot V). 
\end{align*}
Then, $\varrho^\prime$ is said to be obtained by applying a \textit{$\phi$-twisted $\eta$-substitution} to $\varrho$. 
We call an $\mathrm{id}$-twisted $\eta$-substitution a \textit{trivial} $\eta$-substitution. 
\end{defn}

\subsection{The relator $W_2^g$}\label{MCK-relator}

\

In this section, we introduce a relator $W_2^g$ in $\mathrm{Mod}_g^2$ that was introduced by Korkmaz \cite{Kor2}.

We denote by $\Sigma_g^2$ the surface of genus $g$ with two boundary components obtained from $\Sigma_g$ by removing two disjoint open disks (cf. Figure~\ref{CK}). 
Let $a_{g+1}$ be one of the boundary curves of $\Sigma_g^2$ as shown in Figure~\ref{fundamental}, and let $a_{g+1}^\prime$ be the other boundary curve defined by $a_{g+1}^\prime=c_ga_{g+1}$. 
Korkmaz gave the following relator $W_2^g$ in $\mathrm{Mod}_g^2$: 
\begin{align*}
&W_2^g:=
  \left\{ \begin{array}{ll}
      \displaystyle ( t_{B_0} t_{B_1} t_{B_2} \cdots t_{B_g} t_{c_r} )^2 t_{a_{g+1}}^{-1} t_{a^\prime_{g+1}}^{-1} & \ \ (g=2r) \\[3mm]
      \displaystyle ( t_{B_0} t_{B_1} t_{B_2} \cdots t_{B_g} t_{a_{r+1}}^2 t_{a_{r+1}^\prime}^2 )^2 t_{a_{g+1}}^{-1} t_{a_{g+1}^\prime}^{-1} & \ \ (g=2r+1).
      \end{array} \right.
\end{align*}

Since the simple closed curves $a_{g+1}$ and $a_{g+1}^\prime$ are null-homotopic in $\Sigma_g$, $t_{a_{g+1}}=t_{a_{g+1}^\prime}=1$ in $\mathrm{Mod}_g$. 
Therefore, in $\mathrm{Mod}_g$, the word $W_2^g$ is a positive relator. 
This positive relator was discovered by Matsumoto \cite{Ma} in $\mathrm{Mod}_2$ and a generalization of that constructed independently by Cadavid \cite{Ca} and Korkmaz \cite{Kor}.

%Ozbagci and Stipsicz \cite{OS} proved that the word $W_1^g$ is a relator in $\mathrm{Mod}_g^1$. 
Although in \cite{Kor2} Korkmaz does not prove the words $W_2^g$ to be relator in $\mathrm{Mod}_g^2$, 
we can prove it by applying the same argument in Section 2 of \cite{Kor}. 
In Section~\ref{Gurtas}, we give a very short outline of the proof (see Lemma~\ref{korkmaz}).

\section{Lefschetz pencils and fibrations}\label{Lefschetz}
\subsection{Basics on Lefschetz pencils and fibrations}

\

We recall the definition and basic properties of Lefschetz pencils and fibrations. 
More details can be found in \cite{GS}.

\begin{defn}\rm
Let $X$ be a closed, connected, oriented smooth $4$-manifold, 
and let $B=\{b_1,\ldots,b_m\}$ and $C=\{p_1,\ldots,p_n\}$ be finite, disjoint subsets of $X$. 

Let $f:X\setminus B\to S^2$ be a smooth map satisfies the following three conditions:
\begin{itemize}
\item[(a)] For each point $b_i\in B$, there are orientation-preserving complex coordinate charts on which $f$ is of the form $f(z_1, z_2) = z_1/z_2$, 
\item[(b)] $C$ is the set of critical points of $f$, and for each $p_i$ and $f(p_i)$, there are complex local coordinate charts agreeing with 
the orientations of $X$ and $S^2$ on which $f$ is of the form $f(z_{1},z_{2})=z_{1}z_{2}$, 
\item[(c)] For $q\in S^2-f(C)$, $f^{-1}(q)\cup B \subset X$ is diffeomorphic to $\Sigma_g$. 
\end{itemize}
Then, $f$ is called a genus-$g$ \textit{Lefschetz pencil} if $B$ is the non-empty set, and 
$f$ is called a genus-$g$ \textit{Lefschetz fibration} if $B$ is the empty set. 
\end{defn}

The set $B$ is called the \textit{base locus}, and for each $q\in S^2$, $f(q)^{-1}\cup B$ is called the \textit{fiber} of $f$. 
We assume that $f$ is injective on $C$ and that $f$ is relatively minimal (i.e. no fiber contains a sphere with self-intersection number $-1$). 
A fiber containing a critical point is called a \textit{singular fiber}. 
Each singular fiber is obtained by collapsing a simple closed curve, called the \textit{vanishing cycle}, in the regular fiber to a point.

Once we fix an identification of $\Sigma_{g}$ with the fiber over a base point of $S^2-f(C)$, 
we can characterize the Lefschetz fibration $f:X\rightarrow S^2$ by its \textit{monodromy representation} 
$\pi_1(S^2-f(C))\rightarrow \mathrm{Mod}_{g}$. 
Note that in this paper, this map is an anti-homomorphism. 
Let $\gamma_1,\ldots,\gamma_n$ be an ordered system of generating loops for $\pi_1(S^2-f(C))$, 
such that each $\gamma_i$ encircles only $f(p_i)$ and $\gamma_1\gamma_2\cdots\gamma_n$ is homotopically trivial. 
Thus, since the monodromy of the fibration along each of the loops $\gamma_i$ is a right-handed Dehn twist along the corresponding vanishing cycle, 
the monodromy of $f$ comprises a positive relator 
\begin{eqnarray*}
t_{v_n} \cdots t_{v_2} t_{v_1} = 1 \in \mathrm{Mod}_g, 
\end{eqnarray*}
where $v_i$ are the corresponding vanishing cycles of the singular fibers. 
Conversely, for any positive relator $\varrho\in\mathrm{Mod}_g$, we can construct a genus-$g$ Lefschetz fibration over $S^2$ whose monodromy is $\varrho$. 
Therefore, we denote a genus-$g$ Lefschetz fibration associated to a positive relator $\varrho$ in $\mathrm{Mod}_g$ by $f_\varrho:X_\varrho \rightarrow S^2$.

According to theorems of Kas \cite{Kas} and Matsumoto \cite{Ma}, 
if $g\geq 2$, then the isomorphism class of a Lefschetz fibration is determined by a positive relator modulo \textit{simultaneous conjugations} 
\begin{align*}
t_{v_n}\cdots t_{v_2}t_{v_1} \sim t_{\phi(v_n)}\cdots t_{\phi(v_2)}t_{\phi(v_1)} \ \ {\rm for \ any} \ \phi \in \Gamma_g
\end{align*}
and \textit{elementary transformations} 
\begin{align*}
&t_{v_n}\cdots t_{v_{i+2}}t_{v_{i+1}}t_{v_i}t_{v_{i-1}}t_{v_{i-2}}\cdots t_{v_1}& &\sim& &t_{v_n}\cdots t_{v_{i+2}}t_{v_i}t_{t_{v_i}^{-1}(v_{i+1})}t_{v_{i-1}}t_{v_{i-2}}\cdots t_{v_1},&\\
&t_{v_n}\cdots t_{v_{i+2}}t_{v_{i+1}}t_{v_i}t_{v_{i-1}}t_{v_{i-2}}\cdots t_{v_1}& &\sim& &t_{v_n}\cdots t_{v_{i+2}}t_{v_{i+1}}t_{t_{v_i}(v_{i-1})}t_{v_i}t_{v_{i-2}}\cdots t_{v_1}.&
\end{align*}

\subsection{Sections of Lefschetz fibrations}\label{sections}

\

\begin{defn}\rm
For a Lefschetz fibration $f:X\rightarrow S^2$, a map $\sigma:S^2\rightarrow X$ is called a \textit{$k$-section} of $f$ 
if $f\circ \sigma={\rm id}_{S^2}$ and the self-intersection number of the homology class $[\sigma(S^2)]$ in $H_2(X;\Z)$ is equal to $k$. 
\end{defn}

Let $d_1,d_2,\ldots,d_b$ be $b$ boundary curves of $\Sigma_g^b$. %parallel to $b$ boundary components. 
Then, a lift of a positive relator $\varrho = t_{v_n} \cdots t_{v_2} t_{v_1} = 1$ in $\mathrm{Mod}_g$ to $\mathrm{Mod}_g^b$ as 
\begin{align*}
t_{v_n^\prime} \cdots t_{v_2^\prime} t_{v_1^\prime} \cdot  t_{d_b}^{-n_b} \cdots t_{d_2}^{-n_2} t_{d_1}^{-n_1} = 1 
\end{align*}
shows that
the existence of $b$ disjoint sections $\sigma_1,\ldots,\sigma_b$ of $f_\varrho$ such that for each $1\leq i\leq b$, the self-intersection of $\sigma_i$ is equal to $-n_i$. 
Here, $v_i^\prime$ is a simple closed curve mapped to $v_i$ under $\Sigma_g^b \to \Sigma_g$. 
Conversely, if a genus-$g$ Lefschetz fibration admits disjoint $b$ sections $\sigma_1,\ldots,\sigma_b$ such that for each $1\leq i\leq b$, the self-intersection of $\sigma_i$ is equal to $-n_i$, 
then we obtain such a relator in $\mathrm{Mod}_g^b$.

From the definitions of Lefschetz fibrations and pencils, the blow-up all points of $B=\{q_1,\ldots,q_m\}$ of a genus-$g$ Lefschetz pencil 
yields a genus-$g$ Lefschetz fibration with $m$ disjoint $(-1)$-sections. 
From the above fact, this gives a relation 
\begin{align*}
t_{v_n^\prime} \cdots t_{v_2^\prime} t_{v_1^\prime} \cdot  t_{d_m}^{-1} \cdots t_{d_2}^{-1} t_{d_1}^{-1} = 1 
\end{align*}
in $\mathrm{Mod}_g^m$. 
Conversely, such a relator determines a genus-$g$ Lefschetz fibration with $m$ disjoint $(-1)$-sections, and 
by blowing these sections down, we can obtain a genus-$g$ Lefschetz pencil.

When a Lefschetz fibration $X\to S^2$ admit a section, we can compute the fundamental group of $X$ as follows.

\begin{lem}[cf.\cite{GS}]\label{lem2}\rm
Let $\varrho$ be a positive relator $\varrho = t_{v_n} \cdots t_{v_2} t_{v_1}$ of $\mathrm{Mod}_g$. 
Suppose that a genus-$g$ Lefschetz fibration $f:X_\varrho\to S^2$ admits a section $\sigma$. 
Then, the fundamental group $\pi_1(X)$ is isomorphic to the quotient of $\pi_1(\Sigma_g)$ by the normal subgroup generated by $v_1,\ldots,v_n$. 
\end{lem}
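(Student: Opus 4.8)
The plan is to exhibit an explicit presentation of $\pi_1(X_\varrho)$ using the standard handlebody decomposition of a Lefschetz fibration and then identify it with the asserted quotient. First I would recall that a genus-$g$ Lefschetz fibration $X_\varrho \to S^2$ is built from the trivial bundle $\Sigma_g \times D^2$ over a disk (containing all critical values) by attaching one $2$-handle along each vanishing cycle $v_i$ with the appropriate framing, and then capping off over the complementary disk of $S^2$ with $\Sigma_g \times D^2$. The existence of the section $\sigma$ is what lets me control the contribution of this second piece: $\sigma$ meets each fiber in a single point, so after removing a tubular neighborhood of $\sigma$ the capping region deformation retracts in a way that adds no new generators to the fundamental group (equivalently, the section kills the $\pi_1(S^2)$-type contribution and no relation is imposed by the $\Sigma_g \times D^2$ glued at the far disk).

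The main computation is a van Kampen argument. Over the disk containing the critical values, the total space deformation retracts onto $\Sigma_g$ (the reference fiber) with $n$ two-cells attached, one for each singular fiber. Attaching the $2$-handle over the critical value $f(p_i)$ has the effect, on $\pi_1$, of killing the class of the vanishing cycle $v_i$, since the handle is glued along a loop homotopic to $v_i$ in the fiber. Thus this piece has fundamental group $\pi_1(\Sigma_g)/\langle\langle v_1,\ldots,v_n\rangle\rangle$, where $\langle\langle\cdot\rangle\rangle$ denotes the normal closure. Then I would glue in the capping piece along a torus-like boundary $\Sigma_g \times S^1$ (the boundary of the regular neighborhood of the far fiber union a fiber over the boundary circle). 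Here the presence of the section is essential: the $S^1$-direction is killed because $\sigma$ provides a disk bounding the $S^1$-factor, so van Kampen imposes no further relations and introduces no new generators. Combining the two pieces yields $\pi_1(X_\varrho) \cong \pi_1(\Sigma_g)/\langle\langle v_1,\ldots,v_n\rangle\rangle$.

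The hard part will be handling the gluing over the second disk of $S^2$ cleanly — that is, verifying precisely that the section $\sigma$ forces the loop encircling the far critical-value-free region, together with the generator coming from $\pi_1(S^1)$ in the collar, to be trivial in $\pi_1(X_\varrho)$, and that no relation among the $a_j, b_j$ beyond the normal closure of the $v_i$ survives. I expect this to be the delicate point because it is exactly where the hypothesis of a section is used; without it one would pick up an extra generator and a relation from the global monodromy. Since this is the standard argument (cf. \cite{GS}), I would invoke the handle picture and the van Kampen theorem and present the section-induced triviality as the one step requiring care, rather than re-deriving the full handlebody description from scratch.
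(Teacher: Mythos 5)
The paper offers no proof of this lemma at all --- it is quoted directly from \cite{GS} --- and your handlebody/van Kampen argument is exactly the standard proof given there: $f^{-1}(D^2)\simeq \Sigma_g$ with a $2$-cell attached along each $v_i$, then the cap $\Sigma_g\times D^2$ glued along $\Sigma_g\times S^1$. You also correctly isolate the one point where the section is genuinely used, namely that $\sigma$ provides a disk in $f^{-1}(D^2)$ bounding the $S^1$-factor of the boundary, so that the extra relation $\mu=1$ coming from the van Kampen gluing is automatic and the quotient is exactly by the normal closure of $v_1,\ldots,v_n$; the proposal is sound.
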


\section{Proof of Theorem~\ref{main}}\label{proofmain}

For a finitely presented group
$\Gamma=\langle{x_1,x_2,\dots,x_n\mid{r_1,r_2,\dots,r_k}}\rangle$ with
$n$ generators and $k$ relators, let
$l=\max\{l(r_i)\mid1\leq{i}\leq{k}\}$, where $l(r_i)$ is the syllable
length of $r_i$.
In this section, we suppose $h_1\geq n+l-1$ and $2(h_2-1)\geq k$.

\subsection{Construction of a word $W^g(1,\phi)$}

\

In this subsection, we construct key relator in $\Mod_g^2$. 
Let us consider $\Sigma_g^2$ obtained from $\Sigma_g$ by removing two disjoint open disks (see Figure~\ref{fundamental}, \ref{CK} and~\ref{gurtas1curve}). 
Write $r=2h_1+h_2-1$.

\begin{prop}\label{gurtasrelation}
In $\Mod_g^2$, the following relations hold, where $g\geq r$.
\begin{align*}
V_1:&=t_{c_r}^{-1} \cdot t_{E_{h_1}} t_{A_{4h_1+2}} \cdots t_{A_{2r}} t_{a_r} t_{a_r^\prime} t_{A_{2r}} \cdots t_{A_{4h_1+2}} t_{E_{h_1}} \\
& \ \cdot t_{a_r^\prime} t_{A_{2r}} \cdots t_{A_{4h_1+2}} \cdot (t_{D_0} t_{D_1} \cdots t_{D_{2h_1}})^2 \cdot t_{A_{4h_1+2}} \cdots t_{A_{2r}} t_{a_r^\prime}, \\[3pt]
V_2:&= t_{a_{r+1}^\prime}^{-1}t_{a_{r+1}}^{-1} \cdot t_{E_{h_1}} t_{A_{4h_1+2}} \cdots t_{A_{2r}} t_{a_r} t_{a_r^\prime} t_{A_{2r}} \cdots t_{A_{4h_1+2}} t_{E_{h_1}} \\
& \ \cdot t_{A_{2r+1}} \cdots t_{A_{4h_1+2}} \cdot (t_{D_0} t_{D_1} \cdots t_{D_{2h_1}})^2 \cdot t_{A_{4h_1+2}} \cdots t_{A_{2r+1}}. 
\end{align*}
\end{prop}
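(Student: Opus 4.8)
The assertion is that $V_1$ and $V_2$ are relators, i.e. $V_1=V_2=1$ in $\Mod_g^2$; equivalently, the long positive words on the right express $t_{c_r}$ (in the case of $V_1$) and $t_{a_{r+1}}t_{a_{r+1}'}$ (in the case of $V_2$) as products of right-handed Dehn twists. My plan is to obtain both identities as the lift, to the genus-$g$ surface with two boundary components, of the single relation underlying Gurtas's fibration \cite{Gur}, and then to massage that lift into the stated palindromic shape by conjugation. The two cases differ only in the parity-dependent choice of which boundary-adjacent curve is produced, matching the two branches of $W_2^g$, so I would prove them in parallel.

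First I would isolate the core factor $(t_{D_0}t_{D_1}\cdots t_{D_{2h_1}})^2$. Using the isotoped picture of Figure~\ref{gurtas2curve}, the curves $D_0,\dots,D_{2h_1}$ and $E_{h_1}$ are supported on a subsurface carrying Gurtas's involution, and on that subsurface the square of the chain product $t_{D_0}\cdots t_{D_{2h_1}}$ is controlled by the hyperelliptic-type relation attached to that involution (the exponent $2$, rather than a chain-relation exponent, is exactly the signature of an order-$2$ class). I would first establish, on that subsurface together with its boundary, an exact expression for $(t_{D_0}\cdots t_{D_{2h_1}})^2$ in terms of $t_{E_{h_1}}$, the twists $t_{A_i},t_{a_r},t_{a_r'}$, and the relevant boundary twists. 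This is precisely the \emph{alternative construction of the monodromy of Gurtas's fibration} that the paper carries out in its final section, so I would either invoke that construction or reprove the needed square relation directly, by induction on $h_1$ from a genus-one model.

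Next I would conjugate. The flanking words $t_{a_r'}t_{A_{2r}}\cdots t_{A_{4h_1+2}}$ and its reverse (respectively the $A_{2r+1}$-versions occurring in $V_2$), together with the palindrome $t_{E_{h_1}}t_{A_{4h_1+2}}\cdots t_{A_{2r}}t_{a_r}t_{a_r'}t_{A_{2r}}\cdots t_{A_{4h_1+2}}t_{E_{h_1}}$, are engineered so that conjugating the core relation by them transports its vanishing cycles onto $c_r$, respectively onto $a_{r+1}$ and $a_{r+1}'$. Applying $t_{\phi(c)}=\phi\,t_c\,\phi^{-1}$ repeatedly, I would verify the required curve images one at a time, reading them off from Figures~\ref{CK} and~\ref{gurtas1curve} and checking the index bookkeeping forced by $r=2h_1+h_2-1$. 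To cover all $g\geq r$ at once I would note that every curve occurring in $V_1,V_2$ lives in a subsurface that does not grow with $g$, so the general case follows from a fixed minimal case by pushing the relation forward along the inclusion-induced homomorphism of mapping class groups, under which each $t_c$ maps to the twist about the same curve.

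The step I expect to be the real obstacle is the \emph{exactness} of the lift in $\Mod_g^2$: a relation valid in $\Mod_g$ typically lifts only up to unknown powers of the two boundary twists $t_{a_{g+1}}$ and $t_{a_{g+1}'}$, and the whole force of the proposition is that for these two particular words no residual boundary twist survives. Rather than prove the relation in $\Mod_g$ and lift blindly, I would work on $\Sigma_g^2$ from the outset, carrying both boundary components through every instance of the square relation and every conjugation, and then certify that each boundary exponent vanishes. A clean way to pin those exponents is a framing/winding-number computation against each boundary component, which is insensitive to the interior rewriting and so isolates exactly the boundary multiplicities. This bookkeeping, not any individual relation, is where the care is needed, and it is the portion I would ultimately route through the explicit surface-with-boundary construction of the last section.
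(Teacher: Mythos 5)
Your reading of the statement is right, and so is your instinct to work in the bounded mapping class group throughout and to reduce to a subsurface whose genus does not grow with $g$; but the central mechanism of your argument --- isolate $(t_{D_0}\cdots t_{D_{2h_1}})^2$ and then \emph{conjugate} --- cannot produce these relations. In $V_1$ the word following $t_{c_r}^{-1}$ is a product $P\cdot Q$ of two blocks: $Q=t_{a_r'}t_{A_{2r}}\cdots t_{A_{4h_1+2}}\,(t_{D_0}\cdots t_{D_{2h_1}})^2\,t_{A_{4h_1+2}}\cdots t_{A_{2r}}t_{a_r'}$ flanks the $D$-square by a word and its \emph{reverse}, not its inverse, so $Q$ is not a conjugate of the $D$-square; and the palindrome $P=t_{E_{h_1}}t_{A_{4h_1+2}}\cdots t_{a_r}t_{a_r'}\cdots t_{A_{4h_1+2}}t_{E_{h_1}}$ is an additional positive factor supported essentially on the complementary region, not a conjugator at all. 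Repeated use of $t_{\phi(c)}=\phi t_c\phi^{-1}$ turns one relator into another with the same number of twist factors and the same signs, whereas $V_1$ contains many positive factors ($t_{E_{h_1}}$ twice, each relevant $t_{A_i}$ four times, $t_{a_r}$, three copies of $t_{a_r'}$) that no conjugate of the $D$-square relation can supply. Relatedly, your ``core'' step as stated is not available: $(t_{D_0}\cdots t_{D_{2h_1}})^2$ is supported on a genus-$2h_1$ subsurface disjoint from $E_{h_1},A_{4h_1+2},\ldots,a_r,a_r'$, so it cannot be written in terms of those twists; what is true is that it equals $t_{c_{h_1}'}^{-1}t_{c_{h_1}}^{-1}t_{a_{2h_1+1}}t_{a_{2h_1+1}'}$, a product of twists about the boundary curves of its own supporting subsurface (the two-boundary lift of the Cadavid--Korkmaz relation, Lemma~\ref{korkmazx}).

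The missing ingredient is a second, independent relation on the complementary subsurface with the four boundary components $c_{h_1},c_{h_1}',a_{g+1},a_{g+1}'$: the paper uses Hamada's generalization of the Korkmaz--Ozbagci four-holed torus relation (Proposition~\ref{hyper4b}), proved by induction on genus, which expresses $t_{c_{h_1}}t_{c_{h_1}'}t_{a_{g+1}}t_{a_{g+1}'}$ as exactly the palindromic blocks appearing in $V_1,V_2$ with $t_{a_{2h_1+1}}t_{a_{2h_1+1}'}$ left in the middle. Splicing this with the $D$-square relation along $c_{h_1},c_{h_1}'$ yields Proposition~\ref{gurtasx}, and $V_1,V_2$ are then its one- and two-boundary instances pushed forward from the genus-$r$ subsurface bounded by $c_r$, respectively by $a_{r+1}\cup a_{r+1}'$. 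Your worry about residual boundary twists is legitimate, but in the paper it is resolved by proving both constituent relations in bounded mapping class groups from the outset via lifted chain relations, not by a separate framing audit; in any case such an audit would not repair the structural gap above.
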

We postpone the proof of Proposition~\ref{gurtasrelation} until Section~\ref{Gurtas} (see Proposition~\ref{gurtasx}).

Suppose that $g=2r$, 
Then, we can find two $t_{c_r}$ in the relator $W_2^g$. 
Therefore, by Proposition~\ref{gurtasrelation}, we can apply once trivial $V_1$-substitution and once $\phi$-twisted $V_1$-substitution to $W_2^g$ for some mapping class $\phi$ in $\Mod_g^2$ satisfying $\phi(c_r)=c_r$.

Suppose that $g=2r+1$.
Since then $t_{a_{r+1}}^2t_{a_{r+1}^\prime}^2=(t_{a_{r+1}}t_{a_{r+1}^\prime})^2$, 
we can find four $t_{a_{r+1}}t_{t_{a_{r+1}^\prime}}$ in the relator $W_2^g$. 
Therefore, by Proposition~\ref{gurtasrelation}, we can apply once trivial $V_2$-substitution and once $\phi$-twisted $V_2$-substitution to $W_2^g$ for some mapping class $\phi$ in $\Mod_g^2$ satisfying $\phi(a_{r+1})=a_{r+1}$ and $\phi(a_{r+1}^\prime)=a_{r+1}^\prime$. 
Note that in this case, we may choose any two $t_{a_{r+1}}t_{a_{r+1}^\prime}$ from among these and apply only twice (twisted) $V_2$-substitutions to $W_2^g$.

Then, we denote by 
\begin{align*}
W_2^g(1,\phi)
\end{align*}
the relator in $\Mod_g^2$ which is obtained by applying once trivial $V_1$-substitution and once $\phi$-twisted $V_1$-substitution to $W_2^g$ if $g=2r$, 
and once trivial $V_2$-substitution and once $\phi$-twisted $V_2$-substitution to $W_2^g$ if $g=2r+1$.

Since $t_{a_{g+1}}=1$ and $t_{a_{g+1}^\prime}=1$ in $\Mod_g$, the relator $W_2^g(1,\phi)$ in $\Mod_g$ is a positive relator. 
Therefore, we obtain a genus-$g$ Lefschetz fibration $f_{W_2^g(1,\phi)}$ with two disjoint $(-1)$-sections. 
The vanishing cycles of this fibration are 
\begin{align*}
&B_0, B_1, \ldots, B_g, \\
&D_0, D_1\ldots, D_{2h_1}, E_{h_1}, A_{4h_1+2}, \ldots, A_{2r}, a_r, a_r^\prime, \\
&\phi(D_0), \phi(D_1), \ldots, \phi(D_{2h_1}), \phi(E_2), \phi(A_{4h_1+2}), \ldots, \phi(A_{2r}), \phi(a_r), \phi(a_r^\prime) 
\end{align*}
if $g=2r$, and 
\begin{align*}
&B_0, B_1, \ldots, B_g, a_{r+1}, a_{r+1}^\prime, \\
&D_0, D_1, \ldots, D_{2h_1}, E_{h_1}, A_{4h_1+2}, \ldots, A_{2r+1}, a_r, a_r^\prime, \\
&\phi(D_0), \phi(D_1), \ldots, \phi(D_{2h_1}), \phi(E_{h_1}), \phi(A_{4h_1+2}),\ldots, \phi(A_{2r+1}), \phi(a_r), \phi(a_r^\prime) 
\end{align*}
if $g=2r+1$.

\subsection{Proof for free groups}\label{freegroup}

\

We will construct Lefschetz fibrations with two disjoint $(-1)$-sections whose fundamental groups are free groups in Section~\ref{freegroup}. 
In order to prove this result, we prepare the following Lemma.

\begin{lem}\label{pi1}
Let $r=2h_1+h_2-1$. 
Let $\langle S \rangle$ be the normal closure of the elements of the set $S$ of the following simple closed curves on $\Sigma_g$:
\begin{align*}
&S=\{B_0, B_1, \ldots, B_g, D_0, D_1, \ldots, D_{2h_1}, E_{h_1}, A_{4h_1+2}, \ldots, A_{2r}, a_r, a_r^\prime \}
\end{align*}
if $g=2r$, and 
\begin{align*}
&S= \{B_0, B_1, \ldots, B_g, a_{r+1}, a_{r+1}^\prime, D_0, D_1, \ldots, D_{2h_1}, E_{h_1}, A_{4h_1+2}, \ldots, A_{2r+1}, a_r, a_r^\prime \}
\end{align*}
if $g=2r+1$. 
Then, $\pi_1(\Sigma_g)/\langle S \rangle$ has a presentation with generators $a_1,b_1,\ldots, a_g, b_g$ and with relations 
\begin{align*}
&a_ia_{g+1-i}=b_ia_{g+1-i}b_{g+1-i}a_{g+1-i}^{-1}=1 &&\mathrm{for} \ \ 1\leq i\leq r; \\
&a_{2h_1+k}=b_{2h_1+k}=1 &&\mathrm{for} \ \ 1\leq k\leq h_2-1; \\
&a_ja_{2h_1+1-j}=b_ja_{2h_1+1-j}b_{2h_1+1-j}a_{2h_1+1-j}^{-1}=1 &&\mathrm{for} \ \ 1\leq j\leq h_1; \\ 
&c_{h_1}=1. &&
\end{align*}
if $g=2r$, and 
\begin{align*}
&a_ia_{g+1-i}=b_ia_{g+1-i}b_{g+1-i}a_{g+1-i}^{-1}=1 &&\mathrm{for} \ \ 1\leq i\leq r; \\
&a_{2h_1+k}=b_{2h_1+k}=1 &&\mathrm{for} \ \ 1\leq k\leq h_2-1; \\
&a_ja_{2h_1+1-j}=b_ja_{2h_1+1-j}b_{2h_1+1-j}a_{2h_1+1-j}^{-1}=1 &&\mathrm{for} \ \ 1\leq j\leq h_1; \\
&a_{r+1}=c_{h_1}=1. &&
\end{align*}
if $g=2r+1$. 
\end{lem}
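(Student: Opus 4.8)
The plan is to compute the quotient $\pi_1(\Sigma_g)/\langle S\rangle$ directly by feeding the homotopy-class formulas for each curve in $S$ (equations~\eqref{c_i}--\eqref{E2k-1} and~\eqref{D0}--\eqref{E}) into the presentation $\pi_1(\Sigma_g)=\langle a_1,b_1,\dots,a_g,b_g\mid c_g\rangle$, and then simplifying. Since killing a normal subgroup generated by a set of elements amounts to adding those elements as relators, I would start from the generators $a_1,b_1,\dots,a_g,b_g$ and impose, for each curve $w\in S$, the relation $w=1$ using its expression in terms of the $a_i,b_i$. The target is to show these relations are equivalent to the four families listed in the statement.

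First I would handle the curves $a_{2k+1}'=a_ka_{k+1}^{-1}$ (from $A_{2k+1}$, via~\eqref{E2k-1}) and $A_{2k}=b_k$ (via~\eqref{E2k}); the ranges of the indices $4h_1+2,\dots,2r$ that appear in $S$ give exactly the relations $a_{2h_1+k}=1$ and $b_{2h_1+k}=1$ for $1\le k\le h_2-1$, after noting $r=2h_1+h_2-1$. Next I would extract the symmetric relations from the $B_j$ curves: substituting~\eqref{B2k-1} and~\eqref{B2k} and using the already-derived vanishing of the middle generators to telescope the products $b_kb_{k+1}\cdots$, so that $B_{2k-1}=1$ and $B_{2k}=1$ collapse to $a_ka_{g+1-k}=1$ and $b_ka_{g+1-k}b_{g+1-k}a_{g+1-k}^{-1}=1$ for $1\le k\le r$. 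The same mechanism applied to $D_0,D_{2k-1},D_{2k}$ (equations~\eqref{D0}--\eqref{D2k}), combined with $a_{2h_1+1}=1$ (which follows once $a_{2h_1+k}=1$ is available, or directly from $A_{4h_1+3}$-type curves / the boundary relations), yields the ``inner'' symmetric family $a_ja_{2h_1+1-j}=1$ and $b_ja_{2h_1+1-j}b_{2h_1+1-j}a_{2h_1+1-j}^{-1}=1$ for $1\le j\le h_1$. Finally, $E_{h_1}=c_{h_1}a_{2h_1+1}=1$ together with $a_{2h_1+1}=1$ gives $c_{h_1}=1$, and the curves $a_r,a_r'$ (and in the odd case $a_{r+1},a_{r+1}'$ via~\eqref{aprime}) give $a_r=1$, $c_r=1$, and in the odd case $a_{r+1}=1$; I would then check that the leftover relation $c_{h_1}=1$ is the surviving one and that $c_g=1$ is implied, so the original surface relator is redundant.

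The main obstacle I anticipate is bookkeeping: verifying that the telescoping of the long products really does reduce each $B_j=1$ and $D_j=1$ to the clean two-term relations claimed, and confirming that no extra relations survive and none are missing. In particular I must be careful that the relations I use to simplify a given $B_j$ or $D_j$ have genuinely already been established (so the argument is not circular), and that the index ranges match up exactly—e.g.\ that the generators killed by the $A$-curves are precisely $a_{2h_1+1},\dots$ and $b_{2h_1+1},\dots$ up to the endpoints forced by $r=2h_1+h_2-1$. I would organize this by first listing, for each family of curves in $S$, the raw relation it imposes, then performing the substitutions in a fixed order (kill middle generators first, then exploit them in the telescoping products) so that the simplifications are manifestly non-circular.

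I would carry this out in the two cases $g=2r$ and $g=2r+1$ in parallel, since the only genuine difference is the presence of the boundary-adjacent curves $a_{r+1},a_{r+1}'$ in the odd case, which contribute the single extra relation $a_{r+1}=1$ (the relation $a_{r+1}'=c_ra_{r+1}$ then being automatic given $c_r=1$). After assembling all relations, I would perform a final Tietze-style check that the listed presentation is exactly the set obtained, discarding duplicates such as $a_{r+1-i}a_{\cdots}$ that arise symmetrically, and confirming that the generator set remains $a_1,b_1,\dots,a_g,b_g$ as stated.
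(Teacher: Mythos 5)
Your plan is essentially the paper's proof: impose the $\pi_1$-expressions of the curves in $S$ as relators, kill the middle generators $a_{2h_1+k},b_{2h_1+k}$ using the $A$-curves together with $a_r=1$, and telescope the $B$- and $D$-families down to the two-term relations, with $E_{h_1}$ and (in the odd case) $a_{r+1},a_{r+1}^\prime$ supplying $c_{h_1}=1$ and the extra relation $a_{r+1}=1$. One index to watch in the bookkeeping: $a_r^\prime=c_{r-1}a_r$ gives $c_{r-1}=1$ (hence $c_{2h_1}=1$ once the middle generators are killed), not $c_r=1$ --- and it is $c_{2h_1}=1$ that serves as the base case for telescoping the $D$-relations, while $c_r=1$ comes from the $B$-relations (or from $a_{r+1},a_{r+1}^\prime$ when $g=2r+1$).
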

\begin{proof}
Suppose that $g=2r$. 
From the equalities (\ref{B2k-1}) and (\ref{B2k}) in Section~\ref{notation}, in $\pi_1(\Sigma_g)/\langle S \rangle$, we have 
\begin{align}\label{arelation}
a_ia_{g+1-i}=1. 
\end{align}
This gives 
\begin{align*}
&1=B_{2i-1}=b_ib_{i+1}\cdots b_{g+1-i}c_{g+1-i} &&\mathrm{for} \ \ 1\leq i\leq r; \\
&1=B_{2i}=b_{i+1}b_{i+2}\cdots b_{g-i}c_{g-i} &&\mathrm{for} \ \ 1\leq i\leq r
\end{align*}
in $\pi_1(\Sigma_g)/\langle S \rangle$. 
From these two equalities, we have $b_ic_{g-i}^{-1}b_{g+1-i}c_{g+1-i}=1$ for each $1\leq i\leq r$ and 
\begin{align}\label{crelation}
&c_r=1.
\end{align}
Note that $c_{g+1-i}=b_{g+1-i}^{-1}c_{g-i}(a_{g+1-i}b_{g+1-i}a_{g+1-i}^{-1})$ from the equality (\ref{c_i}). 
Therefore, by $b_ic_{g-i}^{-1}b_{g+1-i}c_{g+1-i}=1$, we obtain 
\begin{align}\label{brelation}
b_ka_{g+1-i}b_{g+1-i}a_{g+1-i}^{-1}=1. 
\end{align}

From $a_r=1$, $A_l=1$ for $4h_1+2 \leq l\leq 2r$ and the equalities (\ref{E2k}) and (\ref{E2k-1}), we obtain 
\begin{align}\label{ab}
a_{2h_1+k}=b_{2h_1+k}=1
\end{align}
for $1\leq k\leq h_2-1$. 
From $a_r^\prime=1$, the equality (\ref{aprime}), (\ref{crelation}), (\ref{c_i}) and (\ref{ab}), we have 
\begin{align}\label{c2relation}
c_{r-1}=c_{2h_1}=1. 
\end{align}
By $a_{2h_1+1}=1$, $c_{2h_1}=1$ and the equalities (\ref{D0}), (\ref{D2k-1}) and (\ref{D2k}), 
a similar argument to the proofs of the relations (\ref{arelation}) and (\ref{brelation}) gives 
\begin{align}\label{abrelation}
&a_ja_{2h_1+1-j}=b_ja_{2h_1+1-j}b_{2h_1+1-j}a_{2h_1+1-j}^{-1}=1& &\mathrm{and}& &c_{h_1}=1&
\end{align}
for $1\leq j\leq 2h_1$.

From the equalities (\ref{arelation}), (\ref{crelation}), (\ref{brelation}), (\ref{ab}), (\ref{crelation}) and (\ref{abrelation}), we see that $\pi_1(\Sigma_g)/\langle S \rangle$ has a presentation with generators $a_1,b_1,\ldots, a_g, b_g$ and with relations 
\begin{align*}
&a_ia_{g+1-i}=b_ia_{g+1-i}b_{g+1-i}a_{g+1-i}^{-1}=1 &&\mathrm{for} \ \ 1\leq i\leq r; \\
&a_{2h_1+k}=b_{2h_1+k}=1 &&\mathrm{for} \ \ 1\leq k\leq h_2-1; \\
&a_ja_{2h_1+1-j}=b_ja_{2h_1+1-j}b_{2h_1+1-j}a_{2h_1+1-j}^{-1}=1 &&\mathrm{for} \ \ 1\leq j\leq h_1; \\ 
&c_g=c_r=c_{r-1}=c_{2h_1}=c_{h_1}=1. &&
\end{align*}
Then, by the equalities (\ref{c_i}), (\ref{ab}) and (\ref{abrelation}), we can delete the relation $c_g=c_r=c_{r-1}=c_{2h_1}=1$. 
This is our claim. 
%Therefore, $\pi_1(\Sigma_g)/\langle S \rangle$ has the presentation. 

Suppose that $g=2r+1$. 
Since $a_{r+1}=a_{r+1}^\prime=1$ and $a_{r+1}^\prime=c_ra_{r+1}$, we have $c_r=1$. 
A similar argument as in the case of $g=2r$ shows that $\pi_1(\Sigma_g)/\langle S \rangle$ has the presentation. 
This completes the proof. 
\end{proof}

We construct Lefschetz fibrations with two disjoint $(-1)$-sections whose fundamental groups are free groups.

Let $h_1\geq 1$ and $h_2-1\geq 1$. 
We define an element $\varphi$ in $\Mod_g^2$ to be 
\begin{align*}
\varphi=t_{a_{n+1}} t_{a_{n+2}} \cdots t_{a_{h_1}} t_{b_{h_1+1}} t_{b_{h_1+2}} \cdots t_{b_{2h_1}}. 
\end{align*}
Then, we see that $\varphi(c_{2h_1+h_2-1})=c_{2h_1+h_2-1}$ if $g=2(2h_1+h_2-1)$, 
and $\varphi(a_{2h_1+h_2})=a_{2h_1+h_2}$ and $\varphi(a_{2h_1+h_2}^\prime)=a_{2h_1+h_2}^\prime$ if $g=2(2h_1+h_2-1)+1$. 
Therefore, we can define the relator $W_2^g(1,\varphi)$ in $\Mod_g^2$.

\begin{prop}\label{free}
Let $\varphi=t_{a_{n+1}} t_{a_{n+2}} \cdots t_{a_{h_1}} t_{b_{h_1+1}} t_{b_{h_1+2}} \cdots t_{b_{2h_1}}$ in $\Mod_g$. 
If $g\geq 2(2n+1)$, 
%We define an element $\phi$ in $\Mod_g$ to be 
%\begin{align*}
%\end{align*}
then we have 
\begin{align*}
\pi_1(X_{W_2^g(1,\varphi)})\cong F_n, 
\end{align*}
where $F_n$ is a free group of rank $n$. 
\end{prop}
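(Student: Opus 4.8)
The plan is to apply Lemma~\ref{lem2} together with Lemma~\ref{pi1}. Since $f_{W_2^g(1,\varphi)}$ admits a section (indeed two disjoint $(-1)$-sections), Lemma~\ref{lem2} tells us that $\pi_1(X_{W_2^g(1,\varphi)})$ is the quotient of $\pi_1(\Sigma_g)$ by the normal closure of all vanishing cycles of the fibration. The vanishing cycles were listed explicitly at the end of the previous subsection: they consist of the curves in the set $S$ of Lemma~\ref{pi1} together with their images under $\varphi$. So the first step is to observe that $\pi_1(X_{W_2^g(1,\varphi)}) \cong \pi_1(\Sigma_g)/\langle S \cup \varphi(S)\rangle$, where $\varphi(S)$ denotes the set of $\varphi$-images of the relevant curves.

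First I would record the presentation of $\pi_1(\Sigma_g)/\langle S\rangle$ supplied by Lemma~\ref{pi1}; this already kills the generators $a_{2h_1+k},b_{2h_1+k}$ (for $1\le k\le h_2-1$) and imposes the palindromic relations $a_ia_{g+1-i}=1$ and $a_ja_{2h_1+1-j}=1$ along with $c_{h_1}=1$, so that the surviving free generators are essentially $a_1,\dots,a_{2h_1}$ subject to those identifications. The next step is to compute $\varphi(S)$ and determine what additional relations the normal closure of $\varphi(S)$ imposes on this quotient. Here I would use that $\varphi = t_{a_{n+1}}\cdots t_{a_{h_1}} t_{b_{h_1+1}}\cdots t_{b_{2h_1}}$ acts on the curves in $S$ by Dehn twists, and track the effect on the homotopy classes listed in Section~\ref{notation} via the formulas (\ref{B2k-1})--(\ref{E}). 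The point of choosing this particular $\varphi$ is that the twisted copies $\varphi(D_j)$, $\varphi(A_l)$, etc., should introduce exactly the relations needed to collapse the generators $a_{n+1},\dots,a_{2h_1}$ and $b_1,\dots,b_{2h_1}$, leaving precisely the free generators $a_1,\dots,a_n$ with no further relations.

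The main obstacle, and the computational heart of the argument, will be step two: showing that the relations from $\langle\varphi(S)\rangle$, combined with those from $\langle S\rangle$, reduce the group to the free group $F_n$ on $a_1,\dots,a_n$ and, crucially, impose \emph{no} relation among $a_1,\dots,a_n$. One must verify that after all the identifications, each $b_i$ is expressible in terms of (and ultimately trivialized by) the generators that have been killed, that $a_{n+1},\dots,a_g$ all become trivial or redundant, and that the remaining $c_{h_1}=1$ type relations are consistent with freeness. I expect the twist $t_{a_i}$ on a curve to conjugate or multiply the relevant homotopy class by a power of $a_i$, so that $\varphi(D_j)=1$ yields relations that trivialize successive $b_{h_1+1},\dots,b_{2h_1}$ and $a_{n+1},\dots,a_{h_1}$; the delicate bookkeeping is to confirm these cascade cleanly and stop exactly at $a_n$.

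To control this I would set up an induction (or a direct Tietze transformation sequence) that eliminates generators one at a time in the order dictated by the structure of $\varphi$, at each stage rewriting a vanishing-cycle relation so that it solves for a single generator. The hypothesis $g\ge 2(2n+1)$ guarantees that $h_1$ and $h_2$ are large enough ($h_1\ge n+l-1$ with $l=1$ for free groups, so $h_1\ge n$, and $2(h_2-1)\ge 0$) for all the required curves to be present and for the indices in the twist formulas to stay in range; I would flag explicitly where this numerical bound is used. Once every generator except $a_1,\dots,a_n$ has been eliminated and no surviving relation involves only $a_1,\dots,a_n$, we conclude $\pi_1(X_{W_2^g(1,\varphi)})\cong F_n$, completing the proof.
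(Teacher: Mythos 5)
Your plan follows the paper's own proof essentially step for step: Lemma~\ref{lem2} reduces the computation to the quotient of $\pi_1(\Sigma_g)$ by the normal closure of the vanishing cycles, Lemma~\ref{pi1} handles the untwisted ones, and the explicit computation of $\varphi(D_j)$ (each twist multiplying the homotopy class by the corresponding $a_i$ or $b_i^{-1}$) supplies the extra relations $a_{n+1}=\cdots=a_{h_1}=1$ and $b_{h_1}=\cdots=b_{2h_1}=1$, while the twisted relations for $a_r$, $a_r^\prime$, $E_{h_1}$, $A_k$ are redundant since $\varphi$ fixes those curves. One small point of care: only the curves introduced by the $V_1$- (or $V_2$-) substitution acquire $\varphi$-twisted copies, not the $B_i$, so the quotient is by $S$ together with the $\varphi$-images of the $D_j$, $E_{h_1}$, $A_k$, $a_r$, $a_r^\prime$ only --- though this does not change the outcome, since those would-be extra elements $\varphi(B_i)$ become redundant once the twisting curves are killed.
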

\begin{proof}
Let $h_1\geq n$ and $h_2-1\geq 1$. 
For simplicity of notation, we write $G$ instead of $\pi_1(X_{W_2^g(1,\varphi)})$. 
%Note that $\phi(E_i)=E_i$ for each $i=1,\ldots, 2h_2-1$ and that $\phi(a_{2h_1+h_2-1})=a_{2h_1+h_2-1}$ and $\phi(a_{2h_1+h_2-1}^\prime)=a_{2h_1+h_2-1}^\prime$. 

Suppose that $g=2(2h_1+h_2-1)$ and let $r=2h_1+h_2-1$. 
Note that $G$ has a presentation 
with generators $a_1, b_1,\ldots, a_g, b_g$ and with relations 
\begin{align*}
&c_g=1; && \\
&B_i=1 &&\mathrm{for} \ \ 0\leq i\leq g; \\ 
&a_r=a_r^\prime=E_{h_1}=1; && \\
&D_j=A_k=1 &&\mathrm{for} \ \ 0\leq j\leq 2h_1, \ \ 4h_1+2\leq k\leq 4h_1+2h_2-2; \\
&\varphi(a_r)=\varphi(a_r^\prime)=\varphi(E_{h_1})=1; && \\
&\varphi(D_j)=\varphi(A_k)=1 &&\mathrm{for} \ \ 0\leq j\leq 2h_1, \ \ 4h_1+2\leq k\leq 4h_1+2h_2-2. 
\end{align*}
%In particular, by Lemma~\ref{pi12}, this presentation is isomorphic to a presentation with generators $a_1,b_1\ldots, a_g,b_g$ and relations 
%\begin{align*}
%&c_g=c_r=1; &&\\
%&a_ia_{g+1-i}=b_ia_{g+1-i}b_{g+1-i}a_{g+1-i}^{-1} &&\mathrm{for} \ 1\leq i\leq r; \\
%&c_{r-1}=c_{h_1}=1; &&\\
%&a_ja_{2h_1+1-j}=b_ja_{2h_1+1-j}b_{2h_1+1-j}a_{2h_1+1-j}^{-1}=1 &&\mathrm{for} \ 1\leq j\leq 2h_1, \\
%&a_{2h_1+k}=b_{2h_1+k}=1 &&\mathrm{for} \ 1\leq k\leq h_2-1, \\
%&\phi_1(a_r)=\phi_1(a_r^\prime)=1; && \\
%&\phi_1(D_j)=\phi_1(E_k)=1 &&\mathrm{for} \ 0\leq j\leq 2h_1, \ \ 2\leq k\leq 2h_2-1. 
%\end{align*}

It is easily seen that we have the following equalities (up to conjugation) in $\pi_1(\Sigma_g)$: 
%For $1\leq l\leq n$, 
\begin{align*}
&\varphi(D_0)=a_{h_1} \cdots a_{n+2}a_{n+1}D_0; \\
&\varphi(D_{2l-1})=b_{2h_1-l+1}^{-1}a_{h_1} \cdots a_{n+2}a_{n+1}D_{2l-1} &&\mathrm{for} \ \ 1\leq l\leq n; \\
&\varphi(D_{2l})=b_{2h_1-l+1}^{-1}a_{h_1} \cdots a_{n+2}a_{n+1}D_{2l} &&\mathrm{for} \ \ 1\leq l\leq n.
\end{align*}
Thus, by $D_0=\varphi(D_0)=D_j=\varphi(D_j)=1$ for $1\leq j\leq 2h_1$, we obtain 
\begin{align*}
b_{2h_1-l+1}=1 \ \ \ \mathrm{for} \ \ \ 1\leq l\leq n. 
\end{align*}

Similarly, we have the following equalities (up to conjugation) in $\pi_1(\Sigma_g)$: 
\begin{align*}
&\varphi(D_{2l-1})=b_{2h_1-l+1}^{-1}a_{h_1} \cdots a_{l+1}a_lD_{2l-1} &&\mathrm{for} \ \  n+1\leq l\leq r-1; \\
&\varphi(D_{2l})=b_{2h_1-l+1}^{-1}a_{h_1} \cdots a_{l+2}a_{l+1}D_{2l-1} &&\mathrm{for} \ \  n+1\leq l\leq r-1; \\
&\varphi(D_{2h_1-1})=b_{h_1+1}^{-1}a_{h_1}D_{2h_1-1}; \\
&\varphi(D_{2h_1})=b_{h_1+1}^{-1}B_{2h_1}. 
\end{align*}
By $D_j=1$ for $1\leq j\leq 2h_1$, $\varphi(D_{2l-1})=\varphi(D_{2l})=1$ for $n+1\leq l\leq h_1$, we obtain 
\begin{align*}
a_l=1 \ \ \ \mathrm{for} \ \ \ n+1\leq l\leq h_1. 
\end{align*}
Moreover, by $\varphi(D_{2l})=\varphi(D_{2l+1})=\varphi(D_{2h_1})=1$ for $n+1\leq l\leq h_1-1$, we have 
\begin{align*}
b_{2h_1-l+1}=1 \ \ \ \mathrm{for} \ \ \ n+1\leq l\leq h_1. 
\end{align*}

Here, since $\varphi(a_r)=a_r$, $\varphi(a_r^\prime)=a_r^\prime$, $\varphi(E_{h_1})=E_{h_1}$ and $\varphi(A_k)=A_k$ in $\pi_1(\Sigma_g)$ for each $4h_1+2\leq k\leq 4h_1+2h_2-2$, 
we can delete the relations $\varphi(a_r)=1, \varphi(a_r^\prime)=1$, $\varphi(E_{h_1})=1$ and $\varphi(A_K)=1$ from the above presentation of $G$.

From the above arguments and Lemma~\ref{pi1}, we see that $G$ has a presentation with generators $a_1,b_1,\ldots,a_g,b_g$ and with relations 
\begin{align*}
%&c_g=c_r=1; &&\\
&a_ia_{g+1-i}=b_ia_{g+1-i}b_{g+1-i}a_{g+1-i}^{-1} &&\mathrm{for} \ \ 1\leq i\leq r; \\
&a_{2h_1+k}=b_{2h_1+k}=1 &&\mathrm{for} \ \ 1\leq k\leq h_2-1; \\
%&c_{2h_1}=c_{h_1}=1; &&\\
&a_ja_{2h_1+1-j}=b_ja_{2h_1+1-j}b_{2h_1+1-j}a_{2h_1+1-j}^{-1}=1 &&\mathrm{for} \ \ 1\leq j\leq h_1; \\
&c_{h_1}=1; &&\\
&a_{n+1}=a_{n+2}=\cdots =a_{h_1}=1;\\ 
&b_{h_1}=b_{h_1+1}=\cdots =b_{2h_1}=1.
\end{align*}
It is easily shown that this is a presentation of he free group of rank $n$, with a free basis $a_1\ldots,a_n$, 
%\begin{align*}
that is, $G$ is isomorphic to $F_n$. 
%\end{align*}

The proof for $g=2r+1$ is similar. 
This completes the proof of Proposition~\ref{free}. 
\end{proof}

%As a corollary to Proposition~\ref{prop4.2}, by considering $W_2^g(1,(\mathrm{id},\phi_2))$, we obtain the following result. 
%\begin{cor}
%For $g\geq 4n$, there is a genus-$g$ Lefschetz fibration with two disjoint $(-1)$-sections whose fundamental group is isomorphic to a free group of rank $n$. 
%\end{cor}

\subsection{Construction of Loops}\label{loop}

\

In order to prove Theorem~\ref{main}, we will use the following result. 
\begin{prop}\label{prop5.1}
Let $F_n$ be the subgroup of $\pi_1(\Sigma_n)$ generated by the generators $a_1,\ldots, a_n$, i.e., $F_n$ is a free group of rank $n$. 
Let $r_1,\ldots,r_k$ be arbitrary $k$ elements in $F_n$ represented as words in $a_1,\ldots,a_n$. 
Let $l=\max_{1\leq i\leq k}\{l(r_i)\}$, where $l(r_i)$ is the syllable lengths of $r_i$. 
Then, there are loops $R_1,\ldots,R_k$ on $\Sigma_{n+l-1}$ with the following property:
For each $1\leq i\leq k$,
\begin{enumerate}
\item[(a)] $R_i$ is a simple loop on $\Sigma_{n+l-1}$. 
\item[(b)] $R_i$ is freely homotopic to a simple closed curve which intersects $a_{n+l-1}$ transversely at only one point. 
\item[(c)] $\Phi([R_i])=r_i$, where $[R_i]\in\pi_1(\Sigma_{n+l-1})$ is the homotopy class of $R_i$, and $\Phi:\pi_1(\Sigma_{n+l-1})\to \pi_1(\Sigma_n)$ is the map defined by $\Phi(a_j)=a_j$ for $1\leq j\leq n$ and $\Phi(\alpha)=1$ 
for $\alpha\in\{a_{n+1},\ldots, a_{n+l-1},b_1,\ldots,b_{n+l-1}\}$. 
\end{enumerate}
\end{prop}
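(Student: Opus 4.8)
The plan is to build each $R_i$ explicitly as an embedded loop on the standardly embedded surface $\Sigma_{n+l-1}$ and then to read off the three properties directly from the construction; this is exactly the content deferred to Appendix~A. First I would fix a syllable-reduced expression $r_i=a_{j_1}^{m_1}a_{j_2}^{m_2}\cdots a_{j_{s}}^{m_s}$ with $s=l(r_i)\le l$ and $1\le j_t\le n$. The idea is to thread a single embedded arc through the handles carrying $a_{j_1},\dots,a_{j_s}$ in order, winding $m_t$ times at the $t$-th handle, and to splice consecutive syllables together using the $l-1$ auxiliary handles carrying $a_{n+1},\dots,a_{n+l-1}$ together with the dual curves $b_j$, finally closing the loop across the last handle so that it meets $a_{n+l-1}$ exactly once.

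The device that makes this work is to use one auxiliary handle per gap between consecutive syllables: in the $t$-th gap ($1\le t\le s-1$) the loop is routed over handle $n+t$, which lets the $|m_t|$ parallel strands passing through handle $j_t$ be pulled apart and kept disjoint even when the same generator $a_j$ occurs in several syllables or when some $|m_t|$ is large. Since a word has $s-1\le l-1$ gaps and there are exactly $l-1$ auxiliary handles available, the handles suffice; this is precisely where the genus $n+l-1$ and the definition of $l$ are used. Conceptually, the resulting $R_i$ can be presented as $\psi(b_{n+l-1})$ for a product $\psi$ of Dehn twists supported in the complement of $a_{n+l-1}$, which makes (a) immediate (a homeomorphic image of a simple closed curve is simple) and (b) transparent (such $\psi$ fixes $a_{n+l-1}$, and $b_{n+l-1}$ meets $a_{n+l-1}$ transversely once).

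For (c) I would compute the homotopy class $[R_i]\in\pi_1(\Sigma_{n+l-1})$ from the construction and apply $\Phi$. Every contribution coming from the splicing arcs and the windings around the auxiliary handles is a word in the $b_j$ and in $a_{n+1},\dots,a_{n+l-1}$, all of which lie in $\ker\Phi$; hence $\Phi([R_i])=a_{j_1}^{m_1}\cdots a_{j_s}^{m_s}=r_i$. The main obstacle is the tension between simplicity and exact control of the homotopy class: a naive loop that winds $m_t$ times around a handle with $|m_t|\ge 2$, or that revisits a handle already used by an earlier syllable, self-intersects, so the whole point of the construction is to spend the auxiliary handles to separate these strands while ensuring that they contribute nothing after applying $\Phi$. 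Verifying simultaneously that no unintended self-intersections are introduced and that the closing arc crosses $a_{n+l-1}$ exactly once (for (b)) is the delicate, diagram-driven part, which I would carry out by an explicit picture in Appendix~A.
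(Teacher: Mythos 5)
Your proposal is correct and follows essentially the same route as the paper's Appendix~A: thread the syllables $a_{j_1}^{m_1}\cdots a_{j_s}^{m_s}$ through the first $n$ handles (the paper realizes the $m_t$-fold winding as $t_{\alpha_t}^{-m_t}$ applied to a subarc), resolve the resulting crossings by routing each of the $s-1\leq l-1$ inter-syllable connectors over its own auxiliary handle, observe that all auxiliary contributions lie in $\ker\Phi$, and finally adjust by a word in the $b_j$ so that $R_i$ meets $a_{n+l-1}$ exactly once. The only cosmetic difference is that you justify (a) and (b) by presenting $R_i$ as $\psi(b_{n+l-1})$ for $\psi$ supported off $a_{n+l-1}$, whereas the paper verifies simplicity and the single intersection directly from the explicit arcs; both come down to the same picture.
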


Actually, Proposition~\ref{prop5.1} was essentially proved by Korkmaz (Proposition 4.3 in \cite{Kor2}). 
In Proposition 4.3 of \cite{Kor2}, he defined $l$ as $l=l(r_1)+\cdots+l(r_k)$. 
However, we find that it is sufficient to consider $l$ as $l=\max_{1\leq i\leq k}\{l(r_i)\}$. 
We introduce the proof of Proposition~\ref{prop5.1} in Appendix~\ref{A}.

Let $h_1\geq n+l-1$ and $2(h_2-1)\geq k$, and let $g=2(2h_1+h_2-1)$ or $g=2(2h_1+h_2-1)+1$. 
%Let $\Sigma_g^2$ be a surface of genus $g$ with two boundary components obtained from $\Sigma_g$ by removing two open disks (Figure~\ref{fundamental}). 
Let us consider a surface $\Sigma_{n+l-1}$ and the loops $R_1,\ldots, R_k$ constructed in Proposition~\ref{prop5.1}. 
We remove a small open disk from $\Sigma_{n+l-1}$ near $a_{n+l-1}$ and disjoint from all $R_i$ (cf. Figure~\ref{Ri} (a)). 
Let us denote by $\Sigma_{n+l-1}^1$ the resulting surface of genus $n+l-1$ with one boundary component. 
We embed $\Sigma_{n+l-1}^1$ into the standard surface $\Sigma_g^2$ in such a way that 
for each $1\leq t\leq n+l-1$, simple loops $a_t,b_t$ on $\Sigma_{n+l-1}^1$ correspond to the simple loops $a_t,b_t$ on $\Sigma_g^2$ (cf. Figure~\ref{Ri} (b)). 
Then, we can modify $R_1,\ldots,R_k$ so that $R_1$ intersects $a_{2h_1+h_2-1}$ at one point and does not intersect $A_{4h_1+2},\ldots,A_{4h_1+2h_2-2}$, 
and $R_i$ intersects $A_{4h_1+2h_2-i}$ at one point for each $i=2,\ldots,2h_2-2$ and does not intersect $a_{2h_1+h_2-1}$ and $A_j$ for any $i\neq j$, where $j=2,\ldots,2h_2-2$. 
%\begin{enumerate}
%\item[(a)] $R_1$ intersects $a_{2h_1+h_2-1}$ at one point, and $R_i$ intersects $E_i$ at one point for each $i=2,\ldots,k$. 
%\item[(b)] $R_1$ does not intersect $E_2,\ldots,E_k$, and $R_i$ intersects $a_{2h_1+h_2-1}$ and $E_j$ for any $i\neq j$, where $i,j=2,\ldots,k$. 
%\end{enumerate}
For example, we replace $R_i$ with a simple representative of $[R_i](b_{2h_1+1}b_{2h_2+2}\cdots b_{2h_1+h_2-i})^{\epsilon}$ if $i$ is odd, and $[R_i]a_{2h_1+h_2-i}^{\epsilon}$ if $i$ is even, where $\epsilon=\pm 1$ (cf. Figure~\ref{Ri} (c)). 
\begin{figure}[hbt]
 \centering
     \includegraphics[width=12.5cm]{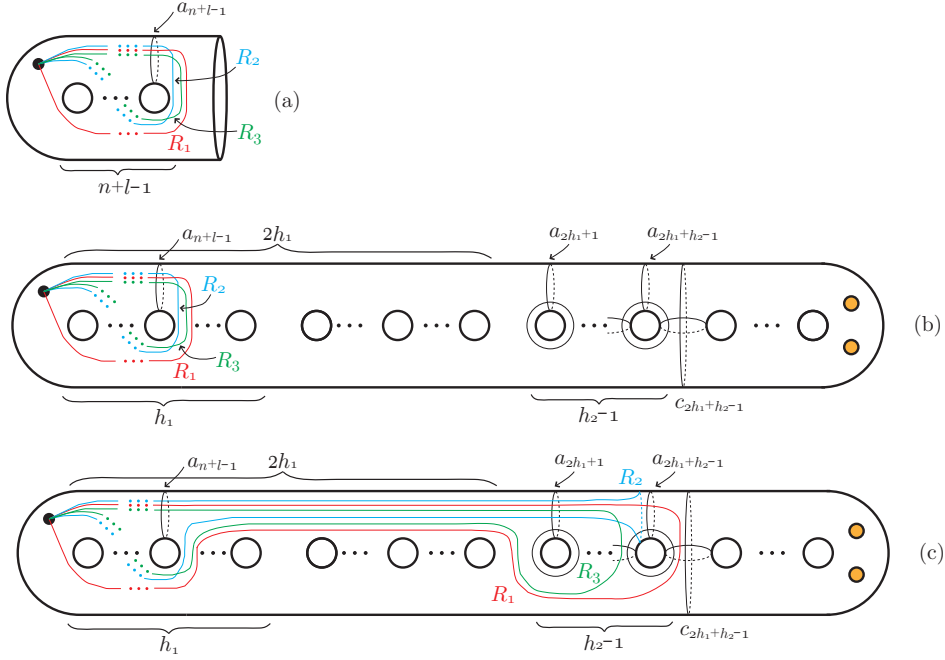}
     \caption{Modified curves $R_1,\ldots,R_k$ on $\Sigma_{g}$ for $g=2(2h_1+h_2-1)$.}
     \label{Ri}
\end{figure}

\subsection{Proof for arbitrary finitely presented group}

\

In this section, we prove Theorem~\ref{main}. 

We define an element $\psi$ in $\Mod_g^2$ to be 
\begin{align*}
\psi_1=t_{R_1} t_{R_2} \cdots t_{R_k} t_{a_{n+1}} t_{a_{n+2}} \cdots t_{a_{h_1}} t_{b_{h_1+1}} t_{b_{h_1+2}} \cdots t_{b_{2h_1}}, 
\end{align*}
where each $R_i$ is the loop in $\Sigma_g^2$ obtained by the construction in Section~\ref{loop}. 
Then, we see that $\psi_1(c_{2h_1+h_2-1})=c_{2h_1+h_2-1}$ if $g=2(2h_1+h_2-1)$, 
and $\psi_1(a_{2h_1+h_2})=a_{2h_1+h_2}$ and $\psi_1(a_{2h_1+h_2}^\prime)=a_{2h_1+h_2}^\prime$ if $g=2(2h_1+h_2-1)+1$. 
Therefore, we can define the relator $W_2^g(1,\psi_1)$ in $\Mod_g^2$. 
Note that in $\Mod_g$, $W_2^g(1,\psi_1)$ is a positive relator.

We now prove Theorem~\ref{main}. 
The proof is inspired by \cite{Kor2} and that of Proposition 13 in \cite{AO}. 

\begin{proof}[Proof of Theorem~\ref{main}]
Let $h_1\geq n+l-1$ and $2(h_2-1)\geq k$.
We will show that 
\begin{align*}
\pi_1(X_{W_2^g(1,\psi_1)})\cong \Gamma,
\end{align*}
where $\psi_1=t_{R_1} t_{R_2} \cdots t_{R_k} t_{a_{n+1}} t_{a_{n+2}} \cdots t_{a_{h_1}} t_{b_{h_1+1}} t_{b_{h_1+2}} \cdots t_{b_{2h_1}}$ in $\Mod_g$ 
and each $R_i$ is the loop in $\Sigma_g$ obtained by the construction in Section~\ref{loop}. 
For simplicity, we write $G^\prime$ instead of $\pi_1(X_{W_2^g(1,\psi_1)})$.

Suppose that $g=2(2h_1+h_2-1)$. 
Let 
\begin{align*}
&\varphi=t_{a_{n+1}} t_{a_{n+2}} \cdots t_{a_{h_1}} t_{b_{h_1+1}} t_{b_{h_1+2}} \cdots t_{b_{2h_1}}, \ \ \ \mathrm{and}\\
&\psi_i=t_{R_i} \cdots t_{R_k} \varphi, 
\end{align*}
and let $V$ be the set of the vanishing cycles of $f_{W_2^g(1,\psi)}$.

Since $R_1$ intersects $a_{2h_1+h_2-1}$ at one point and does not intersect $A_j$ for $j=4h_1+2, \ldots,A_{4h_1+2h_2-2}$, and $a_{2h_1+h_2-1}$ are disjoint from $a_{n+1},\cdots,a_{h_1},b_{h_1+1},\ldots,b_{2h_1}$ and $R_2,\ldots,R_k$, 
we see that in $\pi_1(\Sigma_g)$, up to conjugation, 
\begin{align*}
\psi_1(a_{2h_1+h_2-1})=t_{R_1}(a_{2h_1+h_2-1})=a_{2h_1+h_2-1}R_1^{\epsilon}, 
\end{align*}
where $\epsilon_1$ is equal to $1$ or $-1$. 
Since $a_{2h_1+h_2-1}=1$ in $G$, we may replace the relator $\psi_1(a_{2h_1+h_2-1})=1$ by $R_1=1$. 
Let $c$ be an element of $V$. 
If $R_1$ is disjoint from $\psi_2(c)$, then we have $\phi(c)=t_{R_1}(\psi_2(c))=\psi_2(c)$. 
If $R_1$ intersects $\psi_2(c)$ at $t$ points. 
then it is easily seen that there are elements $x_1,\ldots,x_{t+1}$ in $\pi_1(\Sigma_g)$ such that 
$\psi_2(c)=x_1x_2\cdots x_{t+1}$ and that $t_{R_1}(\psi_2(c))=x_1R_1^{\zeta_1}x_2R_1^{\zeta_2}\cdots x_tR_1^{\zeta_t}x_{t+1}$ (up to conjugacy), 
where each $\zeta_s$ is equal to $1$ or $-1$. 
From $R_1=1$, we obtain $\psi(c)=t_{R_1}(\psi_2(c))=\psi_2(c)$ in $G^\prime$. 
Therefore, we may replace the relator $\psi_1(c)=1$ by $\psi_2(c)=1$.

By repeating this argument for each $i=2,\ldots,k$,  we see that we may replace the relators $\psi_1(A_{4h_1+2h_2-i})=1$ and $\psi_1(c)=1$ by $R_i=1$ and $\varphi(c)=1$, respectively. 
In particular, since for each $j=4h_1+2,\ldots,4h_1+2h_2-2$, $a_{2h_1+h_2-1}=1$ and $A_j=1$ in $G^\prime$ and $a_{2h_1+h_2-1}=\varphi(a_{2h_1+h_2-1})$ and $A_j=\varphi(A_j)$ in $\pi_1(\Sigma_g)$ (up to conjugation), 
we can delete the relator $\psi_1(a_{2h_1+h_2-1})=1$ and $\psi_1(A_j)=1$. 
Therefore, from the proof of Proposition~\ref{free}, we see that $G^\prime$ has a presentation with generators $a_1, b_1,\ldots, a_g, b_g$ and with relations 
\begin{align*}
&a_ia_{g+1-i}=b_ia_{g+1-i}b_{g+1-i}a_{g+1-i}^{-1} &&\mathrm{for} \ \ 1\leq i\leq r; \\
&a_{2h_1+k}=b_{2h_1+k}=1 &&\mathrm{for} \ \ 1\leq k\leq h_2-1; \\
&a_ja_{2h_1+1-j}=b_ja_{2h_1+1-j}b_{2h_1+1-j}a_{2h_1+1-j}^{-1}=1 &&\mathrm{for} \ \ 1\leq j\leq h_1; \\
&c_{h_1}=1; &&\\
&a_{n+1}=a_{n+2}=\cdots =a_{h_1}=1;\\ 
&b_{h_1}=b_{h_1+1}=\cdots =b_{2h_1}=1;\\
&R_1=R_2=\cdots =R_k=1.
\end{align*}

We note that the element $[R_i]\in\pi_1(\Sigma_g)$ is contained in the subgroup generated by $a_1,b_1, \ldots,a_{h_1},b_{h_1}$ and $a_{2h_1+1},b_{2h_1+1},\ldots,a_{2h_1+h_2-1},b_{2h_1+h_2-1}$. 
Since from this presentation, we see that $a_s=1$ for $s=n+1,\ldots,h_1,2h_1+1,\ldots,2h_1+h_2-1$ and $b_j=1$ for $j=1,\ldots,h_1,2h_1+1,\ldots,2h_1+h_2-1$, we get a word representing the element $r_i$ by Proposition~\ref{prop5.1}. 
Therefore, $G$ is isomorphic to $\Gamma$.

A similar argument works for $g=2(2h_1+h_2-1)+1$. 
This completes the proof of Theorem~\ref{main}.
\end{proof}

\begin{proof}[Proof of Corollary~\ref{pencil}]
The genus-$g$ Lefschetz fibration in Theorem~\ref{main} has at least two disjoint $(-1)$-sections. 
Therefore, by blowing down one of the disjoint $(-1)$-sections of this Lefschetz fibration, we obtain the required genus-$g$ Lefschetz pencil. 
This completes the proof. 
\end{proof}

\begin{rem}
The upper bound for $g_P(\Gamma)$ in Corollary~\ref{pencil} may not be sharp. 
In fact, since $\CP$ admits a genus-$0$ Lefschetz pencil, $g_P(\Gamma)=0$ if $\Gamma$ is the trivial group. 
%In fact, since $\CP\sharp \CPb$ and $\CP\sharp 4\CPb$ admit a genus-$0$ Lefschetz pencil, $g_P(\Gamma)=0$ if $\Gamma$ is the trivial group. 
%\end{rem}
%\begin{rem}
When we replace the relations in Proposition~\ref{gurtasrelation} and the map $\varphi_1$ in the proof of Theorem~\ref{main} by another relation and map, we can improve the upper bound of $g_P(\Gamma)$. 
For example,  for $g=4n$, we consider the following word: 
\begin{align*}
W_1^{2n}=(t_{B_0^\prime}t_{B_1^\prime}\cdots t_{B_{2n}^\prime})^2t_{c_{2n}}^{-1}, 
\end{align*}
where $B_{2k}^\prime$ and $B_{2k-1}^\prime$ are the simple closed curves defined by 
\begin{align*}
&B_{2k}^\prime=a_k b_{k+1}b_{k+2} \cdots b_{2n-k} c_{2n-k}a_{2n+1-k}, \ \ \ 0\leq k\leq n; \\
&B_{2k-1}^\prime=a_k b_kb_{k+1} \cdots b_{2n+1-k} c_{2n+1-k}a_{2n+1-k}, \ \ \ 1\leq k\leq n,
\end{align*}
respectively. 
In \cite{OS2}, it was shown that this word $W_1^{2n}$ is a relator in $\Mod_g^2$. 
Since $W_2^g$ includes two $t_{c_{2n}}$, we can apply twice $\phi$-twisted $W_1^{2n}$-substitution to $W_2^g$, where $\phi$ satisfies $\phi(c_{2n})=c_{2n}$. 
We denote by $W(1,\phi)$ the relator obtained by applying once trivial $W_1^{2n}$-substitution and once $\phi$-twisted $W_1^{2n}$-substitution to $W_2^g$. 
Let 
\begin{align*}
&\varphi^\prime = t_{b_{n+1}}t_{b_{n+2}}t_{b_{n+3}}\cdots t_{b_{2n}} \ \ \ \mathrm{and}\\
&\varphi^\prime_m = t_{a_1}t_{a_2}\cdots t_{a_{n-1}}t_{a_n}^m t_{b_{n+2}}t_{b_{n+3}}\cdots t_{b_{2n}}.
\end{align*}
Then, $\pi_1(X_{W(1,\varphi^\prime)})=\langle a_1,\ldots,a_n\rangle \cong F_n$ for each $n$. 
In \cite{HKM}, it was shown that for each $n$, $\pi_1(X_{W(1,\varphi_m^\prime)})=\langle a_n,b_n \mid a_nb_na_n^{-1}b_n^{-1}, a_n^m\rangle\cong\mathbb{Z}\oplus\mathbb{Z}_m$. %a better upper bound for $\Gamma=\mathbb{Z}\oplus\mathbb{Z}_n$ was obtained in \cite{KHM}. 
Therefore, $g_P(F_n)\leq 4n$ and $g_P(\mathbb{Z}\oplus\mathbb{Z}_m)\leq 4$. 
\end{rem}
\begin{rem}
We expect that by twisted substitution techniques, we have analogues of results of Lefschetz fibrations obtained by fiber sum operations for ones with $(-1)$-sections. 
The articles \cite{OS}, \cite{Kor} and \cite{Mo} gave examples of non-holomorphic Lefschetz fibrations by fiber sum operations (and lantern substitutions). 
By twisted substitution techniques (and a lantern substitution), two kinds of non-holomorphic ones with $(-1)$-sections were constructed in \cite{HKM}. 
One is a Lefschetz fibration with non-complex total space, and the other is a Lefschetz fibration violating the ``slope inequality". 
%Applying twisted $MCK_h$-substitution to $MCK_g$ seems to have a similar property to fiber sum. 
%In fact, The first author and the second author \cite{KM} obtained the following result by applying twisted $MCK_h$-substitution to $MCK_g$. 
\end{rem}

\section{Construction of a lift of Gurtas' positive relator}\label{Gurtas}
In this section, we prove Proposition~\ref{gurtasx} instead of Proposition~\ref{gurtasrelation}. 
We find that Proposition~\ref{gurtasrelation} is sufficient to prove Proposition~\ref{gurtasx}. 
In order to prove Proposition~\ref{gurtasx}, we prepare Lemma~\ref{korkmaz}, \ref{korkmazx} and Proposition~\ref{hyper4b}.

We give an outline of the proof that $W_2^g$ is a relator in $\Mod_g^2$. 
\begin{lem}\label{korkmaz}
Let $\Sigma_g^2$ be the compact oriented surface of genus $g$ with two boundary components obtained from $\Sigma_g$ by removing two disjoint open disks. 
Let $a_{g+1}$ and $a_{g+1}^\prime$ be the boundary curves of $\Sigma_g^2$ defined by $a_{g+1}$ and $a_{g+1}^\prime=c_ga_{g+1}$, respectively. 
Then, in $\Mod_g^2$, the following relation holds.
\begin{align*}
&t_{a_{g+1}} t_{a^\prime_{g+1}}= 
  \left\{ \begin{array}{ll}
      \displaystyle ( t_{B_0} t_{B_1} t_{B_2} \cdots t_{B_g} t_{c_r} )^2 & \ \ (g=2r) \\[3mm]
      \displaystyle ( t_{B_0} t_{B_1} t_{B_2} \cdots t_{B_g} t_{a_{r+1}}^2 t_{a_{r+1}^\prime}^2 )^2 & \ \ (g=2r+1).
      \end{array} \right.
\end{align*}
\end{lem}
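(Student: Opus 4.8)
The plan is to establish Lemma~\ref{korkmaz} by recognizing the asserted identity as the boundary-twist computation that underlies the Matsumoto--Cadavid--Korkmaz positive relator, lifted to $\Mod_g^2$. Recall that the word
\begin{align*}
W_2^g =
  \left\{ \begin{array}{ll}
      ( t_{B_0} t_{B_1} \cdots t_{B_g} t_{c_r} )^2 & (g=2r)\\[2mm]
      ( t_{B_0} t_{B_1} \cdots t_{B_g} t_{a_{r+1}}^2 t_{a_{r+1}^\prime}^2 )^2 & (g=2r+1)
      \end{array} \right.
\end{align*}
descends, under the capping map $\Mod_g^2 \to \Mod_g$, to the classical positive relator equal to the identity, since $t_{a_{g+1}} = t_{a_{g+1}^\prime} = 1$ in $\Mod_g$ because $a_{g+1}$ and $a_{g+1}^\prime$ bound disks there. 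Thus in $\Mod_g^2$ the product $(t_{B_0}\cdots t_{B_g}t_{c_r})^2$ (resp.\ the odd analogue) must equal some product of Dehn twists about curves parallel to the two boundary components, i.e.\ a word of the form $t_{a_{g+1}}^{m} t_{a_{g+1}^\prime}^{m'}$ for integers $m,m'$; the content of the lemma is that $m = m' = 1$.

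The approach I would take is the one the paper itself recommends: reproduce the argument of Section~2 of \cite{Kor}, which realizes $W_2^g$ as the monodromy factorization of an explicit genus-$g$ Lefschetz fibration carrying the hyperelliptic involution, and read off the boundary multiplicities directly. Concretely, first I would verify that the left-hand product is central in $\Mod_g^2$ modulo the boundary (it is a power of the hyperelliptic-type relation, hence commutes with the boundary twists), so that it necessarily lies in the subgroup generated by $t_{a_{g+1}}$ and $t_{a_{g+1}^\prime}$. Second, I would compute the two boundary multiplicities $m, m'$ by a Euler-characteristic or signature count: the total space $X_{W_2^g}$ of the associated fibration, together with its two disjoint sections of self-intersection $-m$ and $-m'$, has invariants fixed by the number $2g+6$ (resp.\ $4g+12$) of vanishing cycles, and pinning down the self-intersection numbers of the two sections forces $m = m' = 1$. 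Equivalently, one uses the lantern-type or chain relations to normalize the word until only the two boundary twists remain, tracking their exponents throughout.

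The main obstacle will be the bookkeeping in the second step: making the identification of the two boundary components with $a_{g+1}$ and $a_{g+1}^\prime$ precise, and confirming that each contributes exactly one twist rather than being interchanged or accruing an extra factor under the squaring in $W_2^g$. This is exactly where the symmetry of the Matsumoto relator under the hyperelliptic involution must be invoked carefully, since that involution swaps the two boundary circles and so constrains $(m,m')$ to be symmetric, after which a single self-intersection computation fixes the common value to be $1$. Because \cite{Kor} carries out the genus-$g$ generalization of Matsumoto's genus-$2$ computation in full, I would lift that argument verbatim to the bounded surface, keeping the two boundary curves uncapped throughout and only capping at the very end to check consistency with the known relator in $\Mod_g$; this yields the stated equality and completes the proof.
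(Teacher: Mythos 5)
Your proposal has a genuine gap at its very first reduction. From the fact that $W_2^g$ maps to $1$ under $\Mod_g^2\to\Mod_g$ you conclude that $(t_{B_0}\cdots t_{B_g}t_{c_r})^2$ must be a product of twists about boundary-parallel curves, but the kernel of the capping homomorphism is much larger than $\langle t_{a_{g+1}},t_{a_{g+1}^\prime}\rangle$: by the Birman exact sequence it contains the whole point-pushing subgroup, an extension involving $\pi_1$ of the capped surface. The only way to force the element into $\langle t_{a_{g+1}},t_{a_{g+1}^\prime}\rangle$ along these lines is to show it is \emph{central} in $\Mod_g^2$ (for $g\ge 2$ the center is exactly that subgroup), and your justification for centrality --- that the word ``commutes with the boundary twists'' --- is vacuous, since the boundary twists are central and everything commutes with them. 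Proving that this particular word is central is essentially as hard as the lemma itself. The second step is also not sound: the Euler characteristic and signature of the total space $X_{W_2^g}$ are determined by the number of vanishing cycles, but they do not determine the self-intersection numbers of individual sections, and since $\Mod_g^b$ is perfect for $g\ge 3$ there is no abelian invariant with which to ``count'' the boundary exponents; the hyperelliptic symmetry at best gives $m=m'$, not $m=m'=1$.

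The paper's proof is a direct computation that never caps off the boundary. It imports from Section~2 of \cite{Kor} the braid-type identity $\beta_0\beta_1\cdots\beta_g\beta^2=\Delta_{2g+1}\Delta_{2g}\cdots\Delta_1$, where $\Delta_k=t_{A_1}\cdots t_{A_k}$, $\overline{\Delta}_k=t_{A_k}\cdots t_{A_1}$ and $\beta_k=\overline{\Delta}_k\Delta_{2g+1-k}\Delta_{2g-k}^{-1}\overline{\Delta}_k^{-1}$; it then checks $\overline{\Delta}_k\Delta_{2g-k}(A_{2g+1-k})=B_k$, so that $\beta_k=t_{B_k}$, applies the chain relation to convert $\overline{\Delta}_g^{2g+2}$ (resp.\ $\overline{\Delta}_g^{g+1}$) into $t_{c_r}$ (resp.\ $t_{a_{r+1}}t_{a_{r+1}^\prime}$) and $\Delta_{2g+1}^{2g+2}$ into $t_{a_{g+1}}t_{a_{g+1}^\prime}$, and finally verifies $\Delta_{2g+1}^{2g+2}=(\Delta_{2g+1}\Delta_{2g}\cdots\Delta_1)^2$ using the commutation rule $t_{A_k}\Delta_m=\Delta_m t_{A_{k-1}}$. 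Your outline defers exactly this computation (``normalize the word until only the two boundary twists remain''), which is where the entire content of the lemma lies; to repair the proposal you would need to carry out that explicit chain-relation bookkeeping rather than appeal to topological invariants of the total space.
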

\begin{proof}[Outline of the proof]
%We denote by $A_1\ldots,A_{2g+1}$ the simple closed curves on $\Sigma_g^2$ defined by $A_1=a_1$, $A_{2k}=b_k$ and $A_{2k+1}=a_ka_{k+1}^{-1}$. 
%To shorten notation, we write $t_i$ instead of $t_{A_i}$. 
We define $\Delta_0=\overline{\Delta}_0=1$. 
Moreover, for each $k=1,\ldots,2g+1$, we define $\Delta_k$ and $\overline{\Delta}_k$ to be the words 
\begin{align*}
&\Delta_k=t_{A_1}t_{A_2}\cdots t_{A_k}& &\mathrm{and}& &\overline{\Delta}_k=t_{A_k}\cdots t_{A_2}t_{A_1},&
\end{align*}
For each $k=0,1,\ldots,g$, $\beta_k$ and $\beta$ are defined by 
\begin{align*}
&\beta_k=\overline{\Delta}_k\Delta_{2g+1-k}\Delta_{2g-k}^{-1}\overline{\Delta}_k^{-1}, &\mathrm{and}& &\beta=\overline{\Delta}_g^{g+1}. 
\end{align*}
Then, by applying the same argument in Section 2 of \cite{Kor} with $\sigma_i$ replaced by $t_{A_i}$, 
we have the following relation 
\begin{align}\label{delta}
&\beta_0\beta_1\beta_2\cdots\beta_g\beta^2=\Delta_{2g+1}\Delta_{2g}\cdots\Delta_3\Delta_2\Delta_1, 
\end{align}
where $\sigma_i$ is the standard generator of the braid group $\mathrm{B}_{2g+2}$ on $2g+2$ strings.

It is easy to check that $\overline{\Delta}_k\Delta_{2g-k}(A_{2g+1-k})=B_k$. 
This gives 
\begin{align*}
t_{B_k}=(\overline{\Delta}_k\Delta_{2g-k})t_{A_{2g+1-k}}(\overline{\Delta}_k\Delta_{2g-k})^{-1}=\overline{\Delta}_k\Delta_{2g+1-k}\Delta_{2g-k}^{-1}\overline{\Delta}_k^{-1}=\beta_k. 
\end{align*}
Therefore, from the relation (\ref{delta}), we have 
\begin{align*}
&t_{B_0}t_{B_1}t_{B_2}\cdots t_{B_g}(\overline{\Delta}_g)^{2g+2}=\Delta_{2g+1}\Delta_{2g}\cdots\Delta_3\Delta_2\Delta_1.
\end{align*}
When $g=2r$ (resp. for $g=2r+1$), by the chain relation $\overline{\Delta}_g^{2g+2}=t_{c_r}$ (resp. $\overline{\Delta}_g^{g+1}=t_{a_{r+1}}t_{a^\prime_{r+1}}$), we have 
\begin{align}\label{garsidehalf}
&\Delta_{2g+1}\Delta_{2g}\cdots\Delta_3\Delta_2\Delta_1=
  \left\{ \begin{array}{ll}
      \displaystyle t_{B_0}t_{B_1}t_{B_2}\cdots t_{B_g}t_{c_r} & \ \ (g=2r) \\[3mm]
      \displaystyle t_{B_0}t_{B_1}t_{B_2}\cdots t_{B_g}t_{a_{r+1}}t_{a_{r+1}^\prime} & \ \ (g=2r+1).
      \end{array} \right.
\end{align}

If we prove that $t_{a_{g+1}}t_{a_{g+1}^\prime} = (\Delta_{2g+1}\Delta_{2g}\cdots\Delta_3\Delta_2\Delta_1)^2$ in $\mathrm{Mod}_g^2$, the assertion follows. 
Note that by the chain relation, we have $\Delta_{2g+1}^{2g+2} = t_{a_{g+1}}t_{a_{g+1}^\prime}$ in $\mathrm{Mod}_g^2$. 
By Lemma 2.1 (a) in \cite{Kor} (i.e. $t_{A_k}\Delta_m=\Delta_m t_{A_{k-1}}$ if $1<k\leq m$), we have 
\begin{align*}
\Delta_{2g+1}^{2g+2}
=&\Delta_{2g+1}\Delta_{2g}t_{A_{2g+1}}\Delta_{2g+1}\Delta_{2g+1}^{2g-1}\\
=&\Delta_{2g+1}\Delta_{2g}\Delta_{2g+1}t_{A_{2g}}\Delta_{2g+1}^{2g-1}\\
=&\Delta_{2g+1}\Delta_{2g}\Delta_{2g-1}(t_{A_{2g}}t_{A_{2g+1}})t_{A_{2g}}\Delta_{2g+1}^{2g-1}\\
=&\Delta_{2g+1}\Delta_{2g}\Delta_{2g-1}\Delta_{2g+1}(t_{A_{2g-1}}t_{A_{2g}})t_{A_{2g-1}}\Delta_{2g+1}^{2g-2}\\
=&\Delta_{2g+1}\Delta_{2g}\Delta_{2g-1}\Delta_{2g-2}(t_{A_{2g-1}}t_{A_{2g}}t_{A_{2g+1}})(t_{A_{2g-1}}t_{A_{2g}})t_{A_{2g-1}}\Delta_{2g+1}^{2g-2}\\
 &\rotatebox{90}{$\cdots$}&\\ %\hspace{30mm} \rotatebox{90}{$\cdots$}\\
=&\Delta_{2g+1}\Delta_{2g}\cdots \Delta_1(t_{A_2}t_{A_3}\cdots t_{A_{2g+1}})(t_{A_2}t_{A_3}\cdots t_{A_{2g}})\cdots(t_{A_2}t_{A_3})t_{A_2}\Delta_{2g+1}\\
=&\Delta_{2g+1}\Delta_{2g}\cdots \Delta_1\Delta_{2g+1}\Delta_{2g}\cdots \Delta_1, \\
\end{align*}
and the proof is complete. 
\end{proof}

\begin{figure}[hbt]
 \centering
     \includegraphics[width=12.5cm]{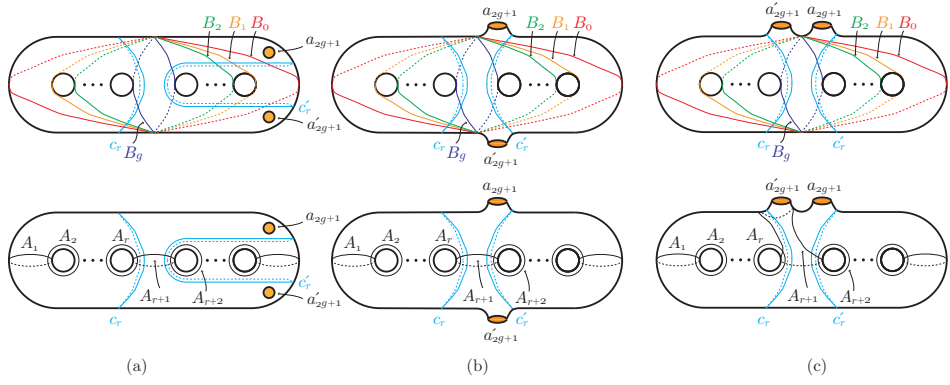}
     \caption{Modified surface $\Sigma_g^2$ and curves $B_0,\ldots,B_g,c_r,c_r^\prime$.}
     \label{crprime}
\end{figure}

\begin{lem}\label{korkmazx}
Suppose that $g=2r$. 
In the notation of Lemma~\ref{korkmaz}, 
let $c_r^\prime$ be the separating simple closed curve defined by $a_{g+1}(b_{r+1}\cdots b_g)a_{g+1}^\prime(b_{r+1}\cdots b_g)^{-1}c_r$ (cf Figure~\ref{crprime} (a)). 
We modify $\Sigma_g^2$ and $B_0,\ldots,B_g$, $c_r, c_r^\prime$ by isotopy as shown in Figure~\ref{crprime} (b) and (c). 
Then, in $\Mod_g^2$, the following relation holds.
\begin{align*}
t_{a_{g+1}}t_{a_{g+1}^\prime}=t_{c_r} t_{c_r^\prime} (t_{B_0}t_{B_1}\cdots t_{B_g})^2.
\end{align*}
\end{lem}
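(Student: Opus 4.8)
The plan is to start from Korkmaz's factorization in Lemma~\ref{korkmaz} and transport the two interior copies of $t_{c_r}$ to the left by conjugation, thereby converting $(t_{B_0}t_{B_1}\cdots t_{B_g}t_{c_r})^2$ into the prescribed form $t_{c_r}t_{c_r^\prime}(t_{B_0}t_{B_1}\cdots t_{B_g})^2$. Write $P:=t_{B_0}t_{B_1}\cdots t_{B_g}$, so that Lemma~\ref{korkmaz} reads $t_{a_{g+1}}t_{a_{g+1}^\prime}=(Pt_{c_r})^2$ in $\Mod_g^2$. Using the conjugation relation $t_{\phi(c)}=\phi t_c\phi^{-1}$ repeatedly (recall that in our convention the rightmost factor acts first), I would push each $t_{c_r}$ across the preceding powers of $P$:
\begin{align*}
(Pt_{c_r})^2 = Pt_{c_r}Pt_{c_r} = t_{P(c_r)}P^2t_{c_r} = t_{P(c_r)}t_{P^2(c_r)}P^2.
\end{align*}
Thus the lemma reduces to the single geometric identity $t_{P(c_r)}t_{P^2(c_r)}=t_{c_r}t_{c_r^\prime}$, and everything after this reduction is formal.

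To establish the identity I would identify the two image curves. The key step is to show that $P$ carries the central separating curve $c_r$ onto $c_r^\prime$: applying the successive twists $t_{B_0},t_{B_1},\ldots,t_{B_g}$ to $c_r$ and tracking the result through the isotopies recorded in Figure~\ref{crprime}(b),(c) should produce exactly the separating curve $a_{g+1}(b_{r+1}\cdots b_g)a_{g+1}^\prime(b_{r+1}\cdots b_g)^{-1}c_r=c_r^\prime$. Granting $P(c_r)=c_r^\prime$, the second image is free: since $(Pt_{c_r})^2=t_{a_{g+1}}t_{a_{g+1}^\prime}$ is central it fixes the isotopy class $c_r$, and because $c_r$ and $c_r^\prime$ are disjoint (evident from Figure~\ref{crprime}) we have $t_{c_r}(c_r)=c_r$ and $t_{c_r}(c_r^\prime)=c_r^\prime$; evaluating $(Pt_{c_r})^2(c_r)=c_r$ then forces $P(c_r^\prime)=c_r$, hence $P^2(c_r)=c_r$. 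Finally, disjointness of $c_r$ and $c_r^\prime$ makes their twists commute, so $t_{P(c_r)}t_{P^2(c_r)}=t_{c_r^\prime}t_{c_r}=t_{c_r}t_{c_r^\prime}$, which closes the argument.

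The main obstacle is precisely the geometric identification $P(c_r)=c_r^\prime$. One must follow the separating curve $c_r$ through the entire product $t_{B_0}t_{B_1}\cdots t_{B_g}$, in which every $B_i$ meets $c_r$, and recognize the resulting simple closed curve as the prescribed $c_r^\prime$. This is exactly the content Figure~\ref{crprime} is arranged to make transparent, and I would carry it out by the indicated isotopies, verifying the equality at the level of the curve rather than merely its homotopy class, since a separating curve is not detected by $\pi_1(\Sigma_g)$. By contrast, the conjugation algebra above, the centrality input, and the commutation of $t_{c_r}$ with $t_{c_r^\prime}$ are all routine once the figure is in hand.
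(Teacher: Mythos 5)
Your argument is correct and hinges on the same crux as the paper's proof, namely the identity $Pt_{c_r}P^{-1}=t_{c_r^\prime}$, i.e.\ $P(c_r)=c_r^\prime$ for $P=t_{B_0}t_{B_1}\cdots t_{B_g}$; the difference is in how this is obtained. You propose to verify $P(c_r)=c_r^\prime$ by tracking the separating curve $c_r$ through all $g+1$ twists $t_{B_i}$, each of which meets $c_r$ --- a delicate picture computation that you defer (and note that Figure~\ref{crprime} only depicts $c_r$ and $c_r^\prime$ after an isotopy of the surface; it is not arranged to display $P(c_r)$). The paper instead gets the identity almost for free from the bookkeeping already set up in the proof of Lemma~\ref{korkmaz}: equation (\ref{garsidehalf}) gives $P=\Delta_{2g+1}\Delta_{2g}\cdots\Delta_1\,t_{c_r}^{-1}$ with $t_{c_r}=(\overline{\Delta}_g)^{2g+2}$ by the chain relation, and the half-twist $\Delta_{2g+1}\cdots\Delta_1$ reverses the chain, $A_i\mapsto A_{2g+2-i}$, hence conjugates $(\overline{\Delta}_g)^{2g+2}=t_{c_r}$ to $(t_{A_{g+2}}\cdots t_{A_{2g+1}})^{2g+2}=t_{c_r^\prime}$; substituting yields $P=t_{c_r^\prime}^{-1}Pt_{c_r}$, which is exactly your key claim, with the only geometric input being the easily checked action of the half-twist on the nonseparating chain curves. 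Your remaining steps are valid but longer than necessary: once $Pt_{c_r}=t_{c_r^\prime}P$ is known, $(Pt_{c_r})^2=t_{c_r^\prime}P^2t_{c_r}$, and conjugating by $t_{c_r}$ --- legitimate because the boundary multitwist $t_{a_{g+1}}t_{a_{g+1}^\prime}$ is central --- gives the lemma at once, without needing $P^2(c_r)=c_r$ or the commutation of $t_{c_r}$ with $t_{c_r^\prime}$ (though your derivation of $P^2(c_r)=c_r$ from centrality is itself correct). In short: your route works, but to make it a complete proof you should either carry out the image computation $P(c_r)=c_r^\prime$ in detail or, better, extract it algebraically from Lemma~\ref{korkmaz} as above.
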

\begin{proof}
%In the notation of the proof of Lemma~\ref{korkmaz}, i
It is easily seen that for each $i=1,\ldots, g$, we have 
\begin{align*}
\Delta_{2g+1}\cdots\Delta_2\Delta_1(A_i)=A_{2g+2-i}. 
\end{align*}
This gives the following relation 
\begin{align*}
\Delta_{2g+1}\cdots\Delta_2\Delta_1 t_{A_i}=t_{A_{2g+i}} \Delta_{2g+1}\cdots\Delta_2\Delta_1. 
\end{align*}
for each $i=1,\ldots,2r$. 
Therefore, we have 
\begin{align*}
\Delta_{2g+1}\cdots\Delta_2\Delta_1 (\overline{\Delta}_g)^{-(2g+2)}=(t_{A_{g+2}}\cdots t_{A_{2g+1}})^{-(2g+2)} \Delta_{2g+1}\cdots\Delta_2\Delta_1. 
\end{align*}
Since 
\begin{align*}
&t_{B_0}t_{B_1}t_{B_2}\cdots t_{B_g}(\overline{\Delta}_g)^{2g+2}=\Delta_{2g+1}\cdots\Delta_2\Delta_1 \ (=t_{B_0}t_{B_1}t_{B_2}\cdots t_{B_g}t_{c_r}) 
\end{align*}
from the proof of Lemma~\ref{korkmaz}, we have 
\begin{align*}
(t_{A_{g+2}}\cdots t_{A_{2g+1}})^{2g+2} t_{B_0}t_{B_1}t_{B_2}\cdots t_{B_g} & = \Delta_{2g+1}\cdots\Delta_2\Delta_1\\
&( = t_{B_0}t_{B_1}t_{B_2}\cdots t_{B_g}t_{c_r}). 
\end{align*}
By the chain relation, we obtain $t_{c_r^\prime}=(t_{A_{g+2}}\cdots t_{A_{2g+1}})^{2g+2}$. 
Therefore, from Lemma~\ref{korkmaz}, we obtain 
\begin{align*}
t_{a_{g+1}}t_{a_{g+1}^\prime}= t_{c_r^\prime} t_{B_0}t_{B_1}\cdots t_{B_g} \cdot t_{B_0}t_{B_1}\cdots t_{B_g}t_{c_r}. 
\end{align*}
By conjugate by $t_{c_r}$, we have 
\begin{align*}
t_{a_{g+1}}t_{a_{g+1}^\prime}=t_{c_r} t_{c_r^\prime} (t_{B_0}t_{B_1}\cdots t_{B_g})^2. 
\end{align*}
\end{proof}

%Hereafter, we assume that $g=2h_1+h_2-1$. 
%Let $\delta_0,\delta_1,\ldots,\delta_{2h_1}$, $\alpha_1,\ldots, \alpha_{2h_2-1}$ and $\beta_0,\gamma_0,\ldots, \beta_{h_2-1},\gamma_{h_2-1},\beta_{h_2},\gamma_{h_2}$ be the simple closed curves on $\Sigma_{h_2-1}$ as shown in Figure~\ref{}. 
The following relation was constructed by Hamada \cite{Ha}.
The proof is based on the argument of \cite{Ta}.

\begin{prop}[\cite{Ha}]\label{hyper4b}
Let $\Sigma_g^4$ be the compact oriented surface of genus $g$ with four boundary components obtained from $\Sigma_g$ by removing four disjoint open disks. 
Let $a_0,a_0^\prime,a_{g+1}$ and $a_{g+1}^\prime$ be the boundary curves of $\Sigma_g^2$, and $a_0$ and $a_{g+1^\prime}$ are defined by $a_0^\prime=c_0a_0$ $a_{g+1}^\prime=c_ga_{g+1}$, respectively. 
Then, the following relation in $\Mod_g^4$ holds:
\begin{align*}
&t_{a_0} t_{a_0^\prime} t_{a_{g+1}} t_{a_{g+1}^\prime} = t_{A_{2g+1}}  \cdots t_{A_2} t_{a_1} t_{a_1^\prime} t_{A_2} \cdots t_{A_{2g+1}} \cdot t_{A_1} \cdots t_{A_{2g}} t_{a_g} t_{a_g^\prime} t_{A_{2g}} \cdots t_{A_1}. 
\end{align*}
\end{prop}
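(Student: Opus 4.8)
The plan is to prove the identity by the hyperelliptic, braid-group lifting technique already used in the outline proof of Lemma~\ref{korkmaz} above (and underlying \cite{Ta}), rather than by a direct cut-and-paste on $\Sigma_g^4$. First I would realize $\Sigma_g^4$ as a double cover of a disk $D$ carrying $2g+2$ marked points, branched over those points, arranged so that the hyperelliptic involution $\iota$ is the deck transformation, the chain $A_1,\dots,A_{2g+1}$ is $\iota$-invariant and projects to the standard arcs joining consecutive marked points, and the four boundary curves $a_0,a_0^\prime,a_{g+1},a_{g+1}^\prime$ are the lifts of two boundary loops of $D$, one enclosing the marked points realizing $a_1,a_1^\prime$ and one enclosing those realizing $a_g,a_g^\prime$. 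Under this correspondence $t_{A_i}$ is the lift of the half-twist $\sigma_i\in\mathrm{B}_{2g+2}$ and a full boundary twist downstairs lifts to the corresponding boundary Dehn twist upstairs; this is exactly the dictionary exploited in Lemma~\ref{korkmaz}, where $\overline{\Delta}_g^{\,g+1}$ lifts $\Delta^{g+1}$ to $t_{a_{r+1}}t_{a_{r+1}^\prime}$.

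Second, I would rewrite the two palindromic blocks of the right-hand side through this dictionary. Using the local chain relation on the sub-surface carrying $A_2,\dots,A_{2g+1}$ (respectively $A_1,\dots,A_{2g}$), I expect each block $t_{A_{2g+1}}\cdots t_{A_2}\,(t_{a_1}t_{a_1^\prime})\,t_{A_2}\cdots t_{A_{2g+1}}$ to be identified as the lift of a single full twist about a sub-disk of $D$, that is, as a boundary twist of a sub-surface of $\Sigma_g^4$; the central factors $t_{a_1}t_{a_1^\prime}$ and $t_{a_g}t_{a_g^\prime}$ themselves arise as lifts of short sub-chain relations, exactly as $t_{a_{r+1}}t_{a_{r+1}^\prime}$ does in Lemma~\ref{korkmaz}. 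The asserted equality then becomes a purely braid-theoretic statement in $\mathrm{B}_{2g+2}$: the product of the two boundary twists about the outer marked points equals a product of two sub-disk full twists, each conjugated into position by a $\Delta$-type word.

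Third, I would establish that braid identity by the Garside-type calculus already in play in Lemma~\ref{korkmaz}, namely the commutation $t_{A_k}\Delta_m=\Delta_m t_{A_{k-1}}$ (equivalently $\sigma_k\Delta=\Delta\sigma_{k-1}$) together with the braid relations, transforming the descending-then-ascending palindromes toward the standard half-twist normal form built from $\Delta_{2g+1}$ and $\overline{\Delta}_g$; an induction on $g$ that peels off the outermost strand should reduce the claim to the genus-one case on four strands, which I would check by hand. Finally I would lift the resulting braid equality back to $\Mod_g^4$ through the centralizer of $\iota$.

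The hard part will be this last step, the framing bookkeeping. A relation lifted from $\mathrm{B}_{2g+2}$ to the hyperelliptic mapping class group is well defined only up to the central boundary twists, so the genuine content is to show that the lift of the right-hand side produces \emph{exactly} $t_{a_0}t_{a_0^\prime}t_{a_{g+1}}t_{a_{g+1}^\prime}$, with coefficient $+1$ on each of the four boundary curves and no residual interior separating twist. I would pin this down by tracking the images of arcs transverse to the chain, or equivalently by an Euler-characteristic and signature count comparing the two words as relators; this is precisely where the argument of \cite{Ta} is needed, since one must verify that the two palindromic blocks contribute exactly the two outer pairs of boundary twists and nothing more.
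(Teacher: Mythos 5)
Your plan takes a genuinely different route from the paper, and the paper's route is worth knowing because it is much more elementary. The paper never passes to the branched cover: it proves the identity by induction on $g$ entirely inside $\Mod_g^4$. The base case is the four-holed torus relation of Korkmaz--Ozbagci, $t_{a_0}t_{a_0^\prime}t_{a_2}t_{a_2^\prime}=(t_{A_1}t_{A_3}t_{A_2}t_{a_1}t_{a_1^\prime}t_{A_2})^2$, rearranged by a cyclic conjugation and the commutation $t_{A_1}t_{A_3}=t_{A_3}t_{A_1}$. The inductive step conjugates the genus-$(g-1)$ identity into $\Sigma_g^4$, applies a second four-holed torus relation on the subsurface spanned by $A_{2g-1},A_{2g},A_{2g+1},a_g,a_g^\prime$ with boundary $a_{g-1},a_{g-1}^\prime,a_{g+1},a_{g+1}^\prime$, and splices the two using disjointness commutations. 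Since the only input is a relation that already carries the correct boundary multiplicities, the framing bookkeeping you identify as ``the hard part'' never arises.

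In your proposal that bookkeeping is not a loose end but the entire content of the proposition, and you do not actually give an argument for it. A candidate relation in $\Mod_g^4$ differs from its image in $\Mod_g$ precisely by the exponents of the four boundary twists and by possible interior separating twists, so the statement to be proved \emph{is} the framing computation; ``tracking arcs'' is a method only once you say which arcs and carry it out, and a signature count in the style of Endo--Nagami yields a single linear constraint, which cannot pin down four independent boundary exponents and exclude residual separating twists. A second soft spot is the identification of each palindromic block $t_{A_{2g+1}}\cdots t_{A_2}\,t_{a_1}t_{a_1^\prime}\,t_{A_2}\cdots t_{A_{2g+1}}$ with the lift of a single full twist about a sub-disk: the flanking ascending word is the reverse, not the inverse, of the descending one, so the block is not a priori even a conjugate of its central factor, and the asserted identification is itself a nontrivial computation in the annular braid group that you would have to supply. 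The skeleton (hyperelliptic lifting in the spirit of Tanaka's paper, which is indeed where the paper says the argument originates) is workable, but as written the decisive step is missing.
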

\begin{proof}
The proof is by induction on genus.

Suppose that $g=1$. 
The following relation, called the \textit{four-holed torus relation}, was constructed by Korkmaz and Ozbagci (see \cite{KO}, Section 3.4):
\begin{align*}
&t_{a_0} t_{a_0^\prime} t_{a_2} t_{a_2^\prime} = (t_{A_1} t_{A_3} t_{A_2} t_{a_1} t_{a_1^\prime} t_{A_2})^2. 
\end{align*}
Since $a_0,a_0^\prime,a_2,a_2^\prime$ are disjoint from $A_1$ and $A_1$ is disjoint from $A_3$, by conjugation by $t_{A_1}$, we have 
\begin{align*}
t_{a_0} t_{a_0^\prime} t_{a_2} t_{a_2^\prime} & = t_{A_3} t_{A_2} t_{a_1} t_{a_1^\prime} t_{A_2} \red{t_{A_1}} \cdot \red{t_{A_3}} t_{A_2} t_{a_1} t_{a_1^\prime} t_{A_2} t_{A_1} \\ 
& =  t_{A_3} t_{A_2} t_{a_1} t_{a_1^\prime} t_{A_2} \red{t_{A_3}} \cdot \red{t_{A_1}} t_{A_2} t_{a_1} t_{a_1^\prime} t_{A_2} t_{A_1}.
\end{align*}
Hence, the conclusion of the Proposition holds for $g=1$.

We assume, inductively, that the relation holds in $\Mod_{g-1}^4$. 
Since then $a_0,a_0^\prime,a_g,a_g^\prime$ are disjoint from $A_1,\ldots,A_{2g-1}$, %from the relation (\ref{hyper}) for $i-1$, %by simultaneous conjugations to (\ref{hyper}), 
we have the following relation in $\Mod_g^4$ by conjugation by $t_{A_{2g-2}} \cdots t_{A_1}$:
\begin{align*}
&t_{a_0} t_{a_0^\prime} t_{a_g} t_{a_g^\prime} \\
&= t_{A_{2g-2}} \cdots t_{A_1} \cdot t_{A_{2g-1}} \cdots t_{A_2} t_{a_1} t_{a_1^\prime} t_{A_2} \cdots t_{A_{2g-1}} \cdot t_{A_1} \cdots t_{A_{2g-2}} t_{a_{g-1}} t_{a_{g-1}}^\prime.
\end{align*}
%Next, we consider the four-holed torus relation 
%\begin{align*}
%t_{a_{g-1}} t_{a_{g-1}^\prime} t_{a_{g+1}} t_{a_{g+1}^\prime} = (t_{A_{2g-1}} t_{A_{2g+1}} t_{A_{2g}} t_{a_g} t_{a_g^\prime} t_{A_{2g}})^2.
%\end{align*}
%By conjugation by $t_{A_{2g-1}} t_{A_{2g+1}} t_{A_{2g}}$, we have 
%%by the four-holed torus relation $t_{a_{g-1}} t_{a_{g-1}^\prime} t_{a_{g+1}} t_{a_{g+1}^\prime} = (t_{A_{2g-1}} t_{A_{2g+1}} t_{A_{2g}} t_{a_g} t_{a_g^\prime} t_{A_{2g}})^2$ and conjugation by $t_{A_{2g-1}} t_{A_{2g+1}} t_{A_{2g}}$ we have 
%\begin{align*}
%t_{a_{g-1}} t_{a_{g-1}^\prime} t_{a_{g+1}} t_{a_{g+1}^\prime} = t_{a_g} t_{a_g^\prime} t_{A_{2g}} t_{A_{2g-1}} t_{A_{2g+1}} t_{A_{2g}} t_{a_g} t_{a_g^\prime} t_{A_{2g}} t_{A_{2g-1}} t_{A_{2g+1}} t_{A_{2g}}. 
%\end{align*}
%%Moreover, 
Since $a_{g-1},a_{g-1}^\prime, a_{g+1}, a_{g+1}^\prime$ are disjoint from $A_{2g-1},A_{2g},A_{2g+1},a_g,a_g^\prime$, 
%by the four-holed torus relation $t_{a_{g-1}} t_{a_{g-1}^\prime} t_{a_{g+1}} t_{a_{g+1}^\prime} = (t_{A_{2g-1}} t_{A_{2g+1}} t_{A_{2g}} t_{a_g} t_{a_g^\prime} t_{A_{2g}})^2$ and conjugation by $t_{A_{2g-1}} t_{A_{2g+1}} t_{A_{2g}}$, 
by the four-holed torus relation 
\begin{align*}
t_{a_{g-1}} t_{a_{g-1}^\prime} t_{a_{g+1}} t_{a_{g+1}^\prime} = (t_{A_{2g-1}} t_{A_{2g+1}} t_{A_{2g}} t_{a_g} t_{a_g^\prime} t_{A_{2g}})^2
\end{align*} 
and conjugation by $t_{A_{2g-1}} t_{A_{2g+1}} t_{A_{2g}}$, 
we have the following relation:
\begin{align*}
t_{a_g}^{-1} t_{a_g^\prime}^{-1} t_{a_{g+1}} t_{a_{g+1}^\prime} =  t_{a_{g-1}^\prime}^{-1} t_{a_{g-1}}^{-1} t_{A_{2g}} t_{A_{2g-1}} t_{A_{2g+1}} t_{A_{2g}} t_{a_g} t_{a_g^\prime} t_{A_{2g}} t_{A_{2g-1}} t_{A_{2g+1}} t_{A_{2g}}. 
\end{align*}
By combining these relations, we have 
\begin{align*}
t_{a_0} t_{a_0^\prime} t_{a_{g+1}} t_{a_{g+1}^\prime} &= t_{A_{2g-2}} \cdots t_{A_1} \cdot t_{A_{2g-1}} \cdots t_{A_2} t_{a_1} t_{a_1^\prime} t_{A_2} \cdots t_{A_{2g-1}} \cdot t_{A_1} \cdots t_{A_{2g-2}} \\
& \ \cdot t_{A_{2g}} t_{A_{2g-1}} t_{A_{2g+1}} t_{A_{2g}} \cdot t_{a_g} t_{a_g^\prime} t_{A_{2g}} t_{A_{2g-1}} t_{A_{2g+1}} t_{A_{2g}}.
\end{align*}
Note that $A_1,\ldots,A_{2g+1}$ are disjoint from $a_0,a_0^\prime,a_{g+1},a_{g+1}^\prime$. 
Moreover, $A_{2g}$ and $A_{2g+1}$ are disjoint from $A_1,\ldots,A_{2g-2}$ and $A_1,\ldots,A_{2g-1}$, respectively. 
Therefore, by conjugation by $t_{A_{2g-2}}\cdots t_{A_1}$ and $t_{A_{2g+1}}t_{A_{2g}}$, we have
\begin{align*}
&t_{a_0} t_{a_0^\prime} t_{a_{g+1}} t_{a_{g+1}^\prime} \\
&= \blue{t_{A_{2g-2}} \cdots t_{A_1}} \cdot t_{A_{2g-1}} \cdots t_{A_2} t_{a_1} t_{a_1^\prime} t_{A_2} \cdots t_{A_{2g-1}} \cdot t_{A_1} \cdots t_{A_{2g-2}} \\
& \ \cdot \red{t_{A_{2g}}} t_{A_{2g-1}} \red{t_{A_{2g+1}}} t_{A_{2g}} \cdot t_{a_g} t_{a_g^\prime} t_{A_{2g}} t_{A_{2g-1}} \blue{t_{A_{2g+1}} t_{A_{2g}}} \\
&= \blue{t_{A_{2g+1}} t_{A_{2g}}} \cdot t_{A_{2g-1}} \cdots t_{A_2} t_{a_1} t_{a_1^\prime} t_{A_2} \cdots t_{A_{2g-1}} \cdot \red{t_{A_{2g}} t_{A_{2g+1}}} \cdot t_{A_1} \cdots t_{A_{2g-2}} \\
& \ \cdot t_{A_{2g-1}} t_{A_{2g}} \cdot t_{a_g} t_{a_g^\prime} t_{A_{2g}} t_{A_{2g-1}} \blue{t_{A_{2g-2}} \cdots t_{A_1}}. 
\end{align*}
%This gives the relation
%\begin{align*}
%&t_{a_0} t_{a_0^\prime} t_{a_{g+1}} t_{a_{g+1}^\prime} = t_{A_{2g+1}}  \cdots t_{A_2} t_{a_1} t_{a_1^\prime} t_{A_2} \cdots t_{A_{2g+1}} \cdot t_{A_1} \cdots t_{A_{2g}} t_{a_g} t_{a_g^\prime} t_{A_{2g}} \cdots t_{A_1}. 
%\end{align*}

This completes the proof of Proposition~\ref{hyper4b}. 
\end{proof}

\begin{figure}[hbt]
 \centering
     \includegraphics[width=8cm]{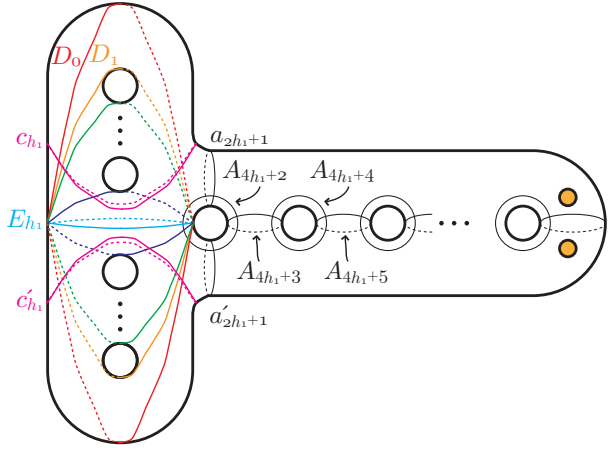}
     \caption{The curve $c_{h_1}^\prime$ on $\Sigma_g^2$.}
     \label{gurtascurve3}
\end{figure}

Proposition~\ref{gurtasrelation} is sufficient to prove the following Proposition. 
\begin{prop}\label{gurtasx}
Let $\Sigma_g^2$ (resp.@$\Sigma_g^1$) be the compact oriented surface of genus $g$ with two boundary components (resp. one boundary component) obtained from $\Sigma_g$ by removing two disjoint open disks (resp. one open disk). 
Let $a_{g+1}, a_{g+1}^\prime=c_ga_{g+1}$ (resp. $a_{g+1}$) be the boundary curves of $\Sigma_g^2$ (resp. the boundary curve of $\Sigma_g^1$). 
Then, the following relations (\ref{gurtas2}) and (\ref{gurtas1}) hold in $\Mod_g^2$ and $\Mod_g^1$, respectively. 
\begin{align}
t_{a_{g+1}} t_{a_{g+1}^\prime} &= t_{E_{h_1}} t_{A_{2h_1+2}} \cdots t_{A_{2g}} t_{a_g} t_{a_g^\prime} t_{A_{2g}} \cdots t_{A_{2h_1+2}} t_{E_{h_1}}  \label{gurtas2} \\
& \ \cdot t_{A_{2g+1}} t_{A_{2g}} \cdots t_{A_{2h_1+2}} \cdot (t_{D_0} t_{D_1} \cdots t_{D_{2h_1}})^2 \cdot t_{A_{2h_1+2}} \cdots t_{A_{2g}} t_{A_{2g+1}}, \notag \\
t_{a_{g+1}}&= t_{E_{h_1}} t_{A_{2h_1+2}} \cdots t_{A_{2g}} t_{a_g} t_{a_g^\prime} t_{A_{2g}} \cdots t_{A_{2h_1+2}} t_{E_{h_1}} \label{gurtas1} \\
& \ \cdot t_{a_g^\prime} t_{A_{2g}} \cdots t_{A_{2h_1+2}} \cdot (t_{D_0} t_{D_1} \cdots t_{D_{2h_1}})^2 \cdot t_{A_{2h_1+2}} \cdots t_{A_{2g}} t_{a_g^\prime}. \notag
\end{align}
\end{prop}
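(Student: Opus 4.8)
The plan is to derive the two chain-type relations (\ref{gurtas2}) and (\ref{gurtas1}) by induction on the number of handles that are ``rolled up'' via the four-holed torus relation of Proposition~\ref{hyper4b}, exactly as in the construction of Gurtas' positive relator. The key observation is that Proposition~\ref{hyper4b} already packages the inductive step: it expresses the product $t_{a_0}t_{a_0^\prime}t_{a_{g+1}}t_{a_{g+1}^\prime}$ of the four boundary twists of $\Sigma_g^4$ as a palindromic word in the $t_{A_i}$ together with $t_{a_1}t_{a_1^\prime}$ and $t_{a_g}t_{a_g^\prime}$. To obtain (\ref{gurtas2}) I would first cap off two of the four boundary components of $\Sigma_g^4$, replacing one pair, say $a_0,a_0^\prime$, by the genus structure on which the $D_j$ and $E_{h_1}$ curves live. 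Concretely, I expect to apply Proposition~\ref{hyper4b} on a subsurface of genus $g-h_1$ carrying the curves $A_{2h_1+2},\dots,A_{2g+1}$, while the ``lower'' subsurface of genus $h_1$ contributes the factor $(t_{D_0}t_{D_1}\cdots t_{D_{2h_1}})^2$ through an application of Lemma~\ref{korkmaz} (in its genus-$h_1$ form, the Cadavid--Korkmaz relation), since the $D_j$ are precisely the $B$-type curves for that subsurface.

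The second step is to identify the two sides geometrically. Using the modified picture of $\Sigma_g$ and of $D_0,\dots,D_{2h_1},E_{h_1}$ in Figure~\ref{gurtas2curve}, together with the curve $c_{h_1}^\prime$ of Figure~\ref{gurtascurve3}, I would check that the product $(t_{D_0}\cdots t_{D_{2h_1}})^2$ is, up to the boundary twists of the genus-$h_1$ subsurface, the global monodromy supported there, and that $E_{h_1}=c_{h_1}a_{2h_1+1}$ plays the role of the separating curve $c_r$ in Lemma~\ref{korkmaz}. The conjugations appearing in (\ref{gurtas2})--(\ref{gurtas1})—the outer symmetric words $t_{E_{h_1}}t_{A_{2h_1+2}}\cdots t_{A_{2g}}t_{a_g}t_{a_g^\prime}t_{A_{2g}}\cdots t_{A_{2h_1+2}}t_{E_{h_1}}$ and the flanking $t_{A_{2g+1}}t_{A_{2g}}\cdots t_{A_{2h_1+2}}$ on either side of $(t_{D_0}\cdots t_{D_{2h_1}})^2$—should come out of repeated use of the disjointness commutations $t_ct_d=t_dt_c$ and the conjugation identity $t_{\phi(c)}=\phi t_c\phi^{-1}$, precisely as in the inductive step of the proof of Proposition~\ref{hyper4b} (the blue/red bookkeeping there). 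The relation (\ref{gurtas1}) in $\Mod_g^1$ is then deduced from (\ref{gurtas2}) by capping one of the two boundary components: setting the corresponding boundary twist to $1$ replaces one $t_{A_{2g+1}}$ by $t_{a_g^\prime}$ and merges $a_{g+1},a_{g+1}^\prime$ into the single boundary $a_{g+1}$.

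The main obstacle I anticipate is bookkeeping the conjugations so that the word emerges in exactly the stated palindromic normal form, rather than merely in some conjugate or cyclically permuted form. In particular, verifying that the separating curve produced by the chain relation on the genus-$h_1$ part is genuinely $E_{h_1}=c_{h_1}a_{2h_1+1}$ (and not a different curve isotopic to it only after further conjugation) requires careful tracking against Figures~\ref{gurtas1curve}, \ref{gurtas2curve} and \ref{gurtascurve3}; this is where the equalities (\ref{D0})--(\ref{E}) for $D_0,\dots,D_{2h_1},E_{h_1}$ in $\pi_1(\Sigma_g)$ will be used to confirm the curve identifications. Once the curve identifications are pinned down, the algebra is a routine sequence of commutations and conjugations, so the essential content is geometric rather than computational. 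Finally, I would note that Proposition~\ref{gurtasrelation} follows from Proposition~\ref{gurtasx} by the substitution $r=2h_1+h_2-1$ and by discarding, via $t_{a_{g+1}}=t_{a_{g+1}^\prime}=1$ or the appropriate boundary identifications, the handles beyond index $r$, so it suffices to establish (\ref{gurtas2}) and (\ref{gurtas1}).
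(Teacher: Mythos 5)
Your overall architecture does match the paper's: decompose $\Sigma_g$ along the separating curves bounding the genus-$2h_1$ subsurface carrying $D_0,\dots,D_{2h_1}$ (note: genus $2h_1$, not $h_1$), apply the Cadavid--Korkmaz relation there and Proposition~\ref{hyper4b} on the complementary four-holed subsurface with boundary $c_{h_1},c_{h_1}^\prime,a_{g+1},a_{g+1}^\prime$, and combine by commutations; and your passage from (\ref{gurtas2}) to (\ref{gurtas1}) by capping off one boundary component, so that $A_{2g+1}$ becomes homotopic to $a_g^\prime$, is exactly the paper's final step.

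The gap is in the step where you say the lower subsurface ``contributes the factor $(t_{D_0}t_{D_1}\cdots t_{D_{2h_1}})^2$ through an application of Lemma~\ref{korkmaz}'' and that the remaining algebra is a routine sequence of commutations. Lemma~\ref{korkmaz} in its genus-$2h_1$ form yields $(t_{D_0}\cdots t_{D_{2h_1}}t_{c_{h_1}})^2$, with the separating twist interleaved, not the clean square appearing in (\ref{gurtas2}). Since $c_{h_1}$ intersects the curves $D_j$ (which run through all $2h_1$ handles), the two copies of $t_{c_{h_1}}$ cannot be commuted out of this word; the correct normal form is $t_{c_{h_1}}t_{c_{h_1}^\prime}(t_{D_0}\cdots t_{D_{2h_1}})^2$, where $c_{h_1}^\prime$ is a \emph{second} separating curve, and establishing this identity is precisely the content of Lemma~\ref{korkmazx}, which the paper proves by a separate braid-group computation (pushing $\Delta_{2g+1}\cdots\Delta_1$ past $(\overline{\Delta}_g)^{-(2g+2)}$ and invoking the chain relation to identify $(t_{A_{g+2}}\cdots t_{A_{2g+1}})^{2g+2}=t_{c_r^\prime}$). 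Only in that form do $t_{c_{h_1}}$ and $t_{c_{h_1}^\prime}$ commute with everything produced by Proposition~\ref{hyper4b} and cancel against its boundary factors. A secondary misidentification: $E_{h_1}$ is not the analogue of the separating curve $c_r$ of Lemma~\ref{korkmaz}; in the application of Proposition~\ref{hyper4b} it occupies the slots of $a_1$ and $a_1^\prime$ (which coincide in this geometry), which is why $t_{E_{h_1}}$ appears twice in the symmetric outer word of (\ref{gurtas2}). With Lemma~\ref{korkmazx} supplied, the rest of your plan goes through as the paper's proof does.
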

\begin{proof}
Let $c_{h_1}^\prime$ be the separating simple closed curve %defined by $c_{h_1}^\prime=c_{h_1}c_{2h_1}^{-1}$ 
as shown in Figure~\ref{gurtascurve3}. 
By Lemma~\ref{korkmazx} and Proposition~\ref{hyper4b}, we have 
\begin{align*}
&t_{a_{h_1+1}} t_{a_{h_1+1}^\prime} = t_{c_{h_1}} t_{c_{h_1}^\prime} (t_{D_0} t_{D_1} \cdots t_{D_{2h_1}})^2, \\
&t_{c_{h_1}} t_{c_{h_1}^\prime} t_{a_{g+1}} t_{a_{g+1}^\prime} = t_{a_g} t_{A_{2g}} \cdots t_{A_{2h_1+2}} t_{E_{h_1}} t_{E_{h_1}} t_{A_{2h_1+2}} \cdots t_{A_{2g}} t_{a_g^\prime} \\
& \hspace{83pt} \cdot t_{A_{2g+1}} \cdots t_{A_{2h_1+2}} t_{a_{h_1+1}} t_{a_{h_1+1}^\prime} t_{A_{2h_1+2}} \cdots t_{A_{2g+1}}.
\end{align*}
Since $c_{h_1}$ and $c_{h_1}^\prime$ are disjoint from $A_{2h_1+2},\ldots,A_{2g}, E_{h_1},a_{h_1+1},a_{h_1+1}^\prime$, we have 
\begin{align*}
&t_{c_{h_1}^\prime}^{-1} t_{c_{h_1}}^{-1} \cdot t_{a_{h_1+1}} t_{a_{h_1+1}^\prime} = (t_{D_0} t_{D_1} \cdots t_{D_{2h_1}})^2, \\
&t_{a_{g+1}} t_{a_{g+1}^\prime} = t_{a_g} t_{A_{2g}} \cdots t_{A_{2h_1+2}} t_{E_{h_1}} t_{E_{h_1}} t_{A_{2h_1+2}} \cdots t_{A_{2g}} t_{a_g^\prime} \\
& \hspace{50pt} \cdot t_{A_{2g+1}} \cdots t_{A_{2h_1+2}} \cdot t_{c_{h_1}^\prime}^{-1} t_{c_{h_1}}^{-1} \cdot t_{a_{h_1+1}} t_{a_{h_1+1}^\prime} \cdot t_{A_{2h_1+2}} \cdots t_{A_{2g+1}}. 
\end{align*}
Combining these relations gives the relation (\ref{gurtas2}).

In $\Sigma_g^1$, $A_{2g+1}$ is homotopic to $a_g^\prime$, and the relation (\ref{gurtas1}) follows. 
\end{proof}

\begin{figure}[hbt]
 \centering
     \includegraphics[width=11cm]{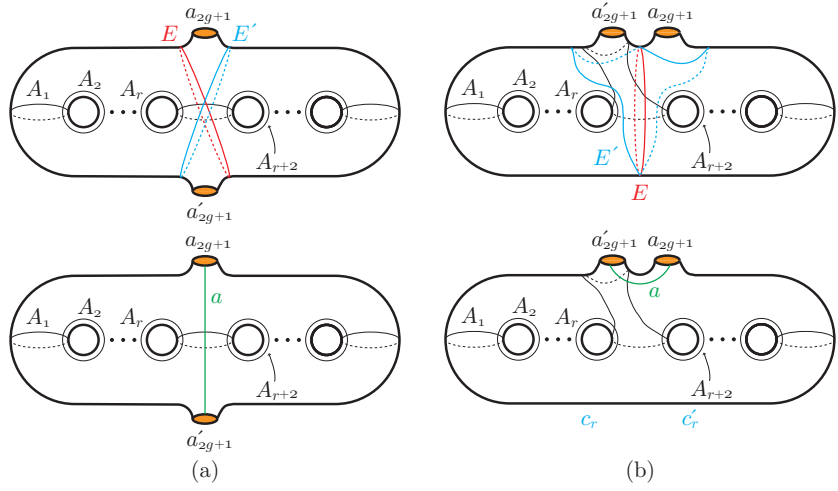}
     \caption{The curves $E,E^\prime$ and the arc $a$.}
     \label{EEprimea}
\end{figure}

Since, in $\Sigma_g$, $a_{g+1}$ and $a_{g+1}^\prime$ are null-homotopic, $t_{a_{g+1}}=t_{a_{g+1}^\prime}=1$ in $\Mod_g$, the relation in Proposition~\ref{gurtasx} is positive relator in $\Mod_g$. 
Then, we note that $A_{2g+1}$ and $a_g^\prime$ are homotopic to $a_g$. 
We prove that, in $\Mod_g$, the relations in Proposition~\ref{gurtasx} are Hurwitz equivalent to Gurtas' positive relator (see \cite{Gur}): 
\begin{align*}
(t_{A_{2h_1+2}} \cdots t_{A_{2g}} t_{a_g} t_{a_g} t_{A_{2g}} \cdots t_{A_{2h_1+2}} t_{D_0} t_{D_1} \cdots t_{D_{2h_1}} t_{E_{h_1}} )^2=1.
\end{align*}
In order to prove this result, we prepare the following Lemma. 
\begin{lem}\label{garside}
We deform $\Sigma_g^2$ as shown in Figure~\ref{EEprimea} (a) and (b). 
Let $E$ and $E^\prime$ be the simple closed curves on $\Sigma_g^2$ as shown in Figure~\ref{EEprimea} (a) and (b), and let $a$ be the arc connecting the boundary components of $\Sigma_g^2$ as shown in Figure~\ref{EEprimea} (a) and (b). 
Then, 
\begin{align}
&t_{B_0}t_{B_1}\cdots t_{B_g}(E)=E^\prime, \label{garsidea}\\
&t_{B_0}t_{B_1}\cdots t_{B_g}t_E(a)=t_{a_{g+1}}t_{a_{g+1}^\prime}(a). \label{garsideb}
\end{align}
\end{lem}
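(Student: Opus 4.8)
The plan is to establish the closed-curve identity (\ref{garsidea}) first, by a direct geometric tracking, and then to reduce the arc identity (\ref{garsideb}) to a single localized computation by invoking Lemma~\ref{korkmaz}.

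Set $\delta:=t_{B_0}t_{B_1}\cdots t_{B_g}$. For (\ref{garsidea}) I would follow the image of $E$ through the twists one factor at a time. The curves $B_0,\ldots,B_g$ form exactly the chain-type configuration used in Lemma~\ref{korkmaz} and drawn in Figures~\ref{CK} and~\ref{crprime}, so each $t_{B_i}$ drags the strand of $E$ crossing $B_i$ one step along the chain, and after the full product $E$ has been swept into the position of $E'$ on the surface of Figure~\ref{EEprimea}. To turn this picture into a proof I would re-use the algebra from the proofs of Lemmas~\ref{korkmaz} and~\ref{korkmazx}, where $\delta$ is identified with $P\,t_{c_r}^{-1}$ for $P:=\Delta_{2g+1}\cdots\Delta_1$, and where $P$ acts on the standard curves by the reflection $P(A_i)=A_{2g+2-i}$. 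Writing $E$ and $E'$ in terms of the $A_i$ then reduces (\ref{garsidea}) to this reflection together with the disjointness data read off from Figure~\ref{EEprimea}.

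For (\ref{garsideb}) the key observation is that the target on the right is completely pinned down by Lemma~\ref{korkmaz}: in the even case $g=2r$ one has $t_{a_{g+1}}t_{a_{g+1}^\prime}=(\delta\,t_{c_r})^2$, so that
\begin{align*}
t_{a_{g+1}}t_{a_{g+1}^\prime}(a)=\delta\,t_{c_r}\,\delta\,t_{c_r}(a).
\end{align*}
Substituting this into the claim $\delta\,t_E(a)=t_{a_{g+1}}t_{a_{g+1}^\prime}(a)$ and cancelling the leading $\delta$ (applying the homeomorphism $\delta^{-1}$ preserves the isotopy class), the assertion (\ref{garsideb}) becomes the single localized equation
\begin{align*}
t_E(a)=t_{c_r}\,\delta\,t_{c_r}(a),
\end{align*}
which I would verify by inspection on the deformed surface, using the position of $c_r$ relative to $a$ to simplify $t_{c_r}(a)$; the odd case $g=2r+1$ is handled in the same way with the corresponding form of Lemma~\ref{korkmaz}.

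The step I expect to be the main obstacle is the bookkeeping of the arc $a$ rather than any conceptual point. Unlike a closed curve, $a$ has its two endpoints pinned on the two boundary circles, and a boundary-parallel twist such as $t_{a_{g+1}}$ acts on $a$ by winding an endpoint once around a boundary component rather than trivially; keeping precise track of these boundary windings, and checking that the isotopies deforming $\Sigma_g^2$ into the form of Figure~\ref{EEprimea} are carried out rel boundary, is the delicate part. I would therefore settle the two localized equations above by careful side-by-side comparison of arcs on the deformed picture rather than by any purely algebraic shortcut.
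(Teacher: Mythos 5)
Your proposal matches the paper's proof in essence: the paper likewise rewrites $t_{B_0}t_{B_1}\cdots t_{B_g}$ as $\Delta_{2g+1}\cdots\Delta_2\Delta_1 t_{c_r}^{-1}$ using the identity (\ref{garsidehalf}) from the proof of Lemma~\ref{korkmaz}, and then verifies both (\ref{garsidea}) and (\ref{garsideb}) by drawing pictures, which is exactly the pictorial inspection you defer to. Your cancellation of the leading $\delta$ in (\ref{garsideb}) via Lemma~\ref{korkmaz} is a correct but minor reshuffle of the same computation (note that the resulting equation $t_E(a)=t_{c_r}\,\delta\,t_{c_r}(a)$ still contains the full product $\delta$, so it is not genuinely more localized than the original identity).
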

\begin{proof}
From the equality (\ref{garsidehalf}), we see that 
\begin{align*}
t_{B_0}t_{B_1}\cdots t_{B_g}=\Delta_{2g+1}\cdots\Delta_2\Delta_1t_{c_r}^{-1}. 
\end{align*}
By drawing picture pictures, we find that 
\begin{align*}
&\Delta_{2g+1}\cdots\Delta_2\Delta_1t_{c_r}^{-1}(E)=E^\prime, \ \ \ \mathrm{and} \ \ \ \Delta_{2g+1}\cdots\Delta_2\Delta_1t_{c_r}^{-1}t_E(a)=t_{a_{g+1}}t_{a_{g+1}^\prime}(a).
\end{align*}
This proves the Lemma. 
\end{proof}

We prove the following Proposition. 
This means that Proposition~\ref{gurtasx} gives an alternative construction of the monodromy of Gurtas fibration. 
\begin{prop}
In $\Mod_g$, the following relation holds. 
\begin{align*}
& t_{E_{h_1}} t_{A_{2h_1+2}} \cdots t_{A_{2g}} t_{a_g} t_{a_g} t_{A_{2g}} \cdots t_{A_{2h_1+2}} t_{E_{h_1}}  \\
& \ \cdot t_{a_g} t_{A_{2g}} \cdots t_{A_{2h_1+2}} \cdot (t_{D_0} t_{D_1} \cdots t_{D_{2h_1}})^2 \cdot t_{A_{2h_1+2}} \cdots t_{A_{2g}} t_{a_g} \\
& \sim (t_{A_{2h_1+2}} \cdots t_{A_{2g}} t_{a_g} t_{a_g} t_{A_{2g}} \cdots t_{A_{2h_1+2}} t_{D_0} t_{D_1} \cdots t_{D_{2h_1}} t_{E_{h_1}} )^2. 
\end{align*}
\end{prop}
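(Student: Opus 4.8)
The plan is to treat the two sides as positive factorizations of one and the same element of $\Mod_g$ --- namely the identity, since the left-hand side is relation (\ref{gurtas1}) read in $\Mod_g$ (where $t_{a_{g+1}}=1$ and $a_g^\prime$, $A_{2g+1}$ become homotopic to $a_g$), while the right-hand side is Gurtas' relator --- and to exhibit an explicit chain of elementary transformations and global conjugations carrying one to the other. To control the bookkeeping I would first abbreviate $Q:=t_{A_{2h_1+2}}\cdots t_{A_{2g}}t_{a_g}$, its reversed word $\overline{Q}:=t_{a_g}t_{A_{2g}}\cdots t_{A_{2h_1+2}}$, $D:=t_{D_0}t_{D_1}\cdots t_{D_{2h_1}}$ and $E:=t_{E_{h_1}}$. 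Since $t_{A_{2h_1+2}}\cdots t_{A_{2g}}t_{a_g}t_{a_g}t_{A_{2g}}\cdots t_{A_{2h_1+2}}=Q\,\overline{Q}$, the left-hand side becomes $E\,Q\,\overline{Q}\,E\,\overline{Q}\,D^2\,Q$ and Gurtas' relator becomes $(Q\,\overline{Q}\,D\,E)^2$. In particular the two words have the same number of factors, which is the first consistency check.

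Next I would record the admissible moves: simultaneous conjugation of the whole factorization, the two elementary transformations of Section~\ref{Lefschetz}, and the cyclic invariance of the factorization that these two together provide. A single cyclic shift brings the left-hand side to $Q\,\overline{Q}\,E\,\overline{Q}\,D^2\,Q\,E$, which already agrees with $Q\,\overline{Q}\,D\,E\,Q\,\overline{Q}\,D\,E$ in its outer blocks $Q\,\overline{Q}\,(\cdots)\,E$. The problem thus reduces to transforming the central block $E\,\overline{Q}\,D^2\,Q$ into the periodic block $D\,E\,Q\,\overline{Q}\,D$, that is, to splitting the doubled $D^2$ across the two halves and sliding $E$ past $\overline{Q}$, while commuting all twists whose curves are disjoint; the relevant disjointness among the $A_j$, $a_g$, the $D_i$ and $E_{h_1}$ can be read off from Figures~\ref{gurtas1curve} and~\ref{gurtas2curve}.

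The geometric engine for this central step is Lemma~\ref{garside}. From the proof of Lemma~\ref{korkmaz}, the product $t_{B_0}\cdots t_{B_g}$ equals the half-twist $\Theta:=\Delta_{2g+1}\cdots\Delta_2\Delta_1$ up to the factor $t_{c_r}^{-1}$, and $\Theta$ reverses the chain by $\Theta(A_i)=A_{2g+2-i}$; this is exactly the hyperelliptic-type involution that exchanges the two halves of Gurtas' relator and explains the square $(Q\,\overline{Q}\,D\,E)^2$. I would use the identities $t_{B_0}\cdots t_{B_g}(E)=E^\prime$ and $t_{B_0}\cdots t_{B_g}\,t_E(a)=t_{a_{g+1}}t_{a_{g+1}^\prime}(a)$ to justify the single conjugation that folds the palindromic central block $E\,\overline{Q}\,D^2\,Q$ onto the periodic one, thereby matching the two halves of the relator.

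The hard part will be precisely this folding/exchange step: verifying that the conjugation realizing $\Theta$ genuinely sends each vanishing cycle of one half of the left-hand factorization to the corresponding cycle of the other half, and that every intermediate elementary transformation is supported on a legitimately disjoint or braid-related pair of curves. Tracking these curve images exactly --- rather than merely checking that both words represent the identity --- is what Lemma~\ref{garside} is designed to supply, and it is where essentially all of the content of the Proposition lies; the surrounding reorderings reduce to routine commutations among disjoint curves.
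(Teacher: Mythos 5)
Your setup (the abbreviations $Q,\overline{Q},D,E$, the check that both sides are positive factorizations of the same length, and the identification of Lemma~\ref{garside} as the geometric input) agrees with the paper's, but the proof itself is missing: the step you yourself label ``the hard part'' is the entire content of the proposition, and the mechanism you propose for it does not work as described. Conjugating the whole factorization by the half-twist $\Theta=\Delta_{2g+1}\cdots\Delta_2\Delta_1$ is a \emph{simultaneous} conjugation, so it replaces every letter $t_c$ by $t_{\Theta(c)}$; it cannot ``fold'' the palindromic central block $E\,\overline{Q}\,D^2\,Q$ onto the periodic block $D\,E\,Q\,\overline{Q}\,D$, because exchanging or splitting sub-blocks requires partial conjugations, i.e.\ chains of elementary transformations whose legitimacy is exactly what has to be verified. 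Moreover, reducing the problem to ``transforming the central block'' tacitly assumes the needed elementary transformations are supported inside that block; two positive subwords representing the same mapping class need not be Hurwitz equivalent, so this reduction is not automatic.

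What the paper actually does is three concrete moves, each justified by a curve-image computation rather than by disjointness. Writing $\tau=Q$ and $\overline{\tau}=\overline{Q}$: (i) $t_{E_{h_1}}\tau\overline{\tau}t_{E_{h_1}}$ fixes $a_g$ and each $A_i$ for $2h_1+2\leq i\leq 2g$, hence commutes with $\tau$ up to elementary transformations, which together with a cyclic shift turns $E\tau\overline{\tau}E\,\overline{\tau}D^2\tau$ into $E\tau\overline{\tau}E\,\tau\overline{\tau}D^2$; (ii) $(\tau\overline{\tau})^{-1}(E_{h_1})=t_{D_0}\cdots t_{D_{2h_1}}(E_{h_1})$ --- this is where Lemma~\ref{garside}~(\ref{garsideb}), transported to the genus-$h_1$ subsurface where the $D_j$ play the role of the $B_j$, enters --- which lets the middle $t_{E_{h_1}}$ be slid into the centre of $D^2$; (iii) $(\tau\overline{\tau})^{-1}(A_{2h_1+2})=t_{D_0}\cdots t_{D_{2h_1}}t_{E_{h_1}}(A_{2h_1+2})$ from Lemma~\ref{garside}~(\ref{garsidea}), combined with $\tau\overline{\tau}$ fixing the remaining $A_i$ and $a_g$ and their disjointness from the $D_j$ and $E_{h_1}$, gives $\tau\overline{\tau}\cdot\tau\overline{\tau}DE\sim\tau\overline{\tau}DE\cdot\tau\overline{\tau}$ and hence the periodic form $(\tau\overline{\tau}DE)^2$. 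None of these is a routine commutation among disjoint curves --- the curves of $\tau$ genuinely meet $E_{h_1}$ and the $D_j$, which is why $(\tau\overline{\tau})^{-1}$ acts on them nontrivially --- so your closing assessment inverts the actual difficulty. To complete the argument you would need to state and verify these (or equivalent) commutation relations explicitly.
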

\begin{proof}
For simplity of notation, we write 
\begin{align*}
\tau:=t_{A_{2h_1+2}} \cdots t_{A_{2g}} t_{a_g} \ \ \ \mathrm{and} \ \ \ \overline{\tau}:=t_{a_g} t_{A_{2g}} \cdots t_{A_{2h_1+2}}. 
\end{align*}
Note that for each $i=2h_1+2,\ldots, 2g$, we find that 
\begin{align*}
&t_{E_{h_1}} \tau \overline{\tau} t_{E_{h_1}} (A_i)=A_i \ \ \ \mathrm{and} \ \ \ 
t_{E_{h_1}} \tau \overline{\tau} t_{E_{h_1}} (a_g)=a_g. 
\end{align*}
%\begin{align*}
%& t_{E_{h_1}} t_{A_{2h_1+2}} \cdots t_{A_{2g}} t_{a_g} t_{a_g} t_{A_{2g}} \cdots t_{A_{2h_1+2}} t_{E_{h_1}} (A_i)=A_i, \\
%& t_{E_{h_1}} t_{A_{2h_1+2}} \cdots t_{A_{2g}} t_{a_g} t_{a_g} t_{A_{2g}} \cdots t_{A_{2h_1+2}} t_{E_{h_1}} (a_g)=a_g. 
%\end{align*}
%Since 
This gives 
\begin{align*}
& t_{E_{h_1}} \tau \overline{\tau} t_{E_{h_1}} \cdot t_{A_i} \sim t_{A_i} \cdot t_{E_{h_1}} \tau \overline{\tau} t_{E_{h_1}} \ \ \ \mathrm{and} \ \ \ 
 t_{E_{h_1}} \tau \overline{\tau} t_{E_{h_1}} \cdot t_{a_g} \sim t_{a_g} \cdot t_{E_{h_1}} \tau \overline{\tau} t_{E_{h_1}}, 
\end{align*}
so, we obtain the following relation:
\begin{align*}
t_{E_{h_1}} \tau \overline{\tau} t_{E_{h_1}} \cdot \tau \sim \tau \cdot t_{E_{h_1}} \tau \overline{\tau} t_{E_{h_1}}. 
\end{align*}
%\begin{align*}
%& t_{E_{h_1}} t_{A_{2h_1+2}} \cdots t_{A_{2g}} t_{a_g} t_{a_g} t_{A_{2g}} \cdots t_{A_{2h_1+2}} t_{E_{h_1}} \cdot t_{A_i} \\
%& \sim t_{A_i} \cdot t_{E_{h_1}} t_{A_{2h_1+2}} \cdots t_{A_{2g}} t_{a_g} t_{a_g} t_{A_{2g}} \cdots t_{A_{2h_1+2}} t_{E_{h_1}}, \\
%& t_{E_{h_1}} t_{A_{2h_1+2}} \cdots t_{A_{2g}} t_{a_g} t_{a_g} t_{A_{2g}} \cdots t_{A_{2h_1+2}} t_{E_{h_1}} \cdot t_{a_g} \\
%& \sim t_{a_g} \cdot t_{E_{h_1}} t_{A_{2h_1+2}} \cdots t_{A_{2g}} t_{a_g} t_{a_g} t_{A_{2g}} \cdots t_{A_{2h_1+2}} t_{E_{h_1}}. 
%\end{align*}
%From the two above equalities, 
Therefore, applying elementary transformations (including cyclic permutations) gives %the following sequence of equivalences:
\begin{align}\label{ele1}
& t_{E_{h_1}} \tau \overline{\tau}t_{E_{h_1}} \cdot \overline{\tau} (t_{D_0} t_{D_1} \cdots t_{D_{2h_1}})^2 \cdot \red{\tau} \sim t_{E_{h_1}} \tau \overline{\tau} t_{E_{h_1}} \cdot \red{\tau} \cdot \overline{\tau} (t_{D_0} t_{D_1} \cdots t_{D_{2h_1}})^2. 
\end{align}
%\begin{align*}
%& t_{E_{h_1}} t_{A_{2h_1+2}} \cdots t_{A_{2g}} t_{a_g} t_{a_g} t_{A_{2g}} \cdots t_{A_{2h_1+2}} t_{E_{h_1}}  \\
%& \ \cdot t_{a_g} t_{A_{2g}} \cdots t_{A_{2h_1+2}} \cdot (t_{D_0} t_{D_1} \cdots t_{D_{2h_1}})^2 \cdot \red{t_{A_{2h_1+2}} \cdots t_{A_{2g}} t_{a_g}} \\
%& \sim t_{E_{h_1}} t_{A_{2h_1+2}} \cdots t_{A_{2g}} t_{a_g} t_{a_g} t_{A_{2g}} \cdots t_{A_{2h_1+2}} t_{E_{h_1}}  \\
%& \hspace{15pt} \cdot \red{t_{A_{2h_1+2}} \cdots t_{A_{2g}} t_{a_g}} \cdot t_{a_g} t_{A_{2g}} \cdots t_{A_{2h_1+2}} \cdot (t_{D_0} t_{D_1} \cdots t_{D_{2h_1}})^2. 
%\end{align*}

%By drawing pictures, we find 
Since by drawing pictures and Lemma~\ref{garside} (\ref{garsideb}), we find 
\begin{align*}
&(\tau \overline{\tau})^{-1} (E_{h_1}) =t_{a_{2h_1+1}}t_{a_{2h_1+1}^\prime}(E_{h_1})%, \\
%\end{align*}
%%Since by drawing pictures and Lemma~\ref{garside} (\ref{garsideb}), we find 
%%\begin{align*}
%%(t_{A_{2h_1+2}} \cdots t_{A_{2g}} t_{a_g} t_{a_g} t_{A_{2g}} \cdots t_{A_{2h_1+2}})^{-1} (E_{h_1}) &=t_{D_0} t_{D_1} \cdots t_{D_{2h_1}} (E_{h_1}) \\
%%& (=t_{a_{2h_1+1}}t_{a_{2h_1+1}^\prime}(E_{h_1})), 
%%\end{align*}
%Moreover, by Lemma~\ref{garside} (\ref{garsideb}), we have 
%\begin{align*}
%&
=t_{D_0} t_{D_1} \cdots t_{D_{2h_1}} (E_{h_1}), %=t_{a_{2h_1+1}}t_{a_{2h_1+1}^\prime}(E_{h_1}), 
\end{align*}
we obtain 
\begin{align*}
\tau \overline{\tau} \cdot t_{D_0} t_{D_1} \cdots t_{D_{2h_1}} \cdot t_{E_{h_1}} \sim t_{E_{h_1}} \cdot \tau \overline{\tau} \cdot t_{D_0} t_{D_1} \cdots t_{D_{2h_1}}.
\end{align*}
%\begin{align*}
%& t_{A_{2h_1+2}} \cdots t_{A_{2g}} t_{a_g} t_{a_g} t_{A_{2g}} \cdots t_{A_{2h_1+2}} \cdot t_{D_0} t_{D_1} \cdots t_{D_{2h_1}} \cdot t_{E_{h_1}} \\
%& \  \sim t_{E_{h_1}} \cdot t_{A_{2h_1+2}} \cdots t_{A_{2g}} t_{a_g} t_{a_g} t_{A_{2g}} \cdots t_{A_{2h_1+2}} \cdot t_{D_0} t_{D_1} \cdots t_{D_{2h_1}}.
%\end{align*}
Therefore, by using this relation, we have
\begin{align}
& t_{E_{h_1}} \tau \overline{\tau} \red{t_{E_{h_1}}} \cdot \tau \overline{\tau} \cdot (t_{D_0} t_{D_1} \cdots t_{D_{2h_1}})^2 \label{ele2}\\
& \sim t_{E_{h_1}} \tau \overline{\tau} \cdot \tau \overline{\tau} \cdot t_{D_0} t_{D_1} \cdots t_{D_{2h_1}} \cdot \red{t_{E_{h_1}}} \cdot t_{D_0} t_{D_1} \cdots t_{D_{2h_1}}. \notag
\end{align}

By drawing pictures and Lemma~\ref{garside} (\ref{garsidea}), we find 
\begin{align*}
& (\tau \overline{\tau})^{-1} (A_{2h_1+2})=t_{D_0} t_{D_1} \cdots t_{D_{2h_1}} t_{E_{h_1}} (A_{2h_1+2}). 
\end{align*}
%\begin{align*}
%& (t_{A_{2h_1+2}} \cdots t_{A_{2g}} t_{a_g} t_{a_g} t_{A_{2g}} \cdots t_{A_{2h_1+2}})^{-1} (A_{2h_1+2})=t_{D_0} t_{D_1} \cdots t_{D_{2h_1}} t_{E_{h_1}} (A_{2h_1+2}). 
%\end{align*}
Therefore, we have 
\begin{align*}
& \tau \overline{\tau} \cdot t_{D_0} t_{D_1} \cdots t_{D_{2h_1}} t_{E_{h_1}}  \cdot t_{A_{2h_1+2}} \sim t_{A_{2h_1+2}} \cdot \tau \overline{\tau} \cdot t_{D_0} t_{D_1} \cdots t_{D_{2h_1}} t_{E_{h_1}} .
\end{align*}
Note that for each $i=2h_1+3,\ldots, 2g$, we find that 
\begin{align*}
& \tau \overline{\tau} (A_i)=A_i \ \ \ \mathrm{and} \ \ \ \tau \overline{\tau} (a_g)=a_g. 
\end{align*}
Moreover, since $A_{2h_1+3},\ldots,A_{2g}$ and $a_g$ are disjoint from $D_0,\ldots, D_{2h_1}, E_{h_1}$, 
Therefore, we obtain the following: 
For each $i=2h_1+3,\ldots, 2g$, 
\begin{align*}
& \tau \overline{\tau} t_{D_0} t_{D_1} \cdots t_{D_{2h_1}} t_{E_{h_1}} \cdot t_{A_i} \sim t_{A_i} \cdot  t\tau \overline{\tau} t_{D_0} t_{D_1} \cdots t_{D_{2h_1}} t_{E_{h_1}}, \\
& \tau \overline{\tau} t_{D_0} t_{D_1} \cdots t_{D_{2h_1}} t_{E_{h_1}} \cdot t_{a_g} \sim t_{a_g} \cdot \tau \overline{\tau} t_{D_0} t_{D_1} \cdots t_{D_{2h_1}} t_{E_{h_1}}. 
\end{align*}
This gives 
\begin{align*}
\tau \overline{\tau} \cdot \tau \overline{\tau} t_{D_0} t_{D_1} \cdots t_{D_{2h_1}} t_{E_{h_1}} \sim \tau \overline{\tau} t_{D_0} t_{D_1} \cdots t_{D_{2h_1}} t_{E_{h_1}} \cdot \tau \overline{\tau}. 
\end{align*}
From this relation, applying elementary transformations (including cyclic permutations) gives
\begin{align}
& \blue{t_{E_{h_1}}} \red{\tau \overline{\tau}} \cdot \tau \overline{\tau} \cdot t_{D_0} t_{D_1} \cdots t_{D_{2h_1}} t_{E_{h_1}} \cdot t_{D_0} t_{D_1} \cdots t_{D_{2h_1}} \label{ele3}\\
& \ \sim \tau \overline{\tau} \cdot t_{D_0} t_{D_1} \cdots t_{D_{2h_1}} t_{E_{h_1}} \cdot \red{\tau \overline{\tau}} \cdot t_{D_0} t_{D_1} \cdots t_{D_{2h_1}} \cdot \blue{t_{E_{h_1}}}. \notag
\end{align}

The Proposition follows from the relations (\ref{ele1}),(\ref{ele2}) and (\ref{ele3}). 
This completes the proof. 
\end{proof}

\appendix
\section{}\label{A}
In this Appendix, we prove Proposition~\ref{prop5.1}.

\begin{proof}[Proof of Proposition~\ref{prop5.1}]
Let us consider the surface $\Sigma_n$ embedded in $\mathrm{R}^3$ as shown as in Figure~\ref{fundamental} such that for each $1\leq j\leq n$, a simple closed curves $b_j^\prime$ on $\Sigma_n$ which is isotopic to $b_j$ lies on the plane $x=0$. 
Write $r_i=a_{i_1}^{m_1}\cdots a_{i_d}^{m_d}$, where $d=l(r_i)$ is the syllable length of $r_i$. 
We denote by $\xi$ a constant such that the base point lies in the plane $z=\xi$. 
Let $L$ be a arc on $\Sigma_n$ which lies in the plane $\{z=\xi\}\cap \{x\geq 0\}$.

For $1\leq t\leq d$, let $\alpha_t$ be a loop on $\Sigma_n$ which is isotopic to $a_{i_t}$. 
If $j_s=j_{s^\prime}$ for some $s<s^\prime$, then we assume that $\alpha_{s^\prime}$ is the right of $\alpha_s$ and that $\alpha_{s^\prime}$ is disjoint from $\alpha_s$. 
Here, we call the positive direction of $y$-axis right. 
Let $A_t$ (resp. $B_t$) be a point on $L$ such that $A_t$ (resp. $B_t$) is the left of $\alpha_t$ (resp. the right of $\alpha_t$) and there are not $A_s,B_s$ between $x_t$ and $A_t$ (resp. $B_t$ and $x_t$). 
\begin{figure}[hbt]
 \centering
     \includegraphics[width=10cm]{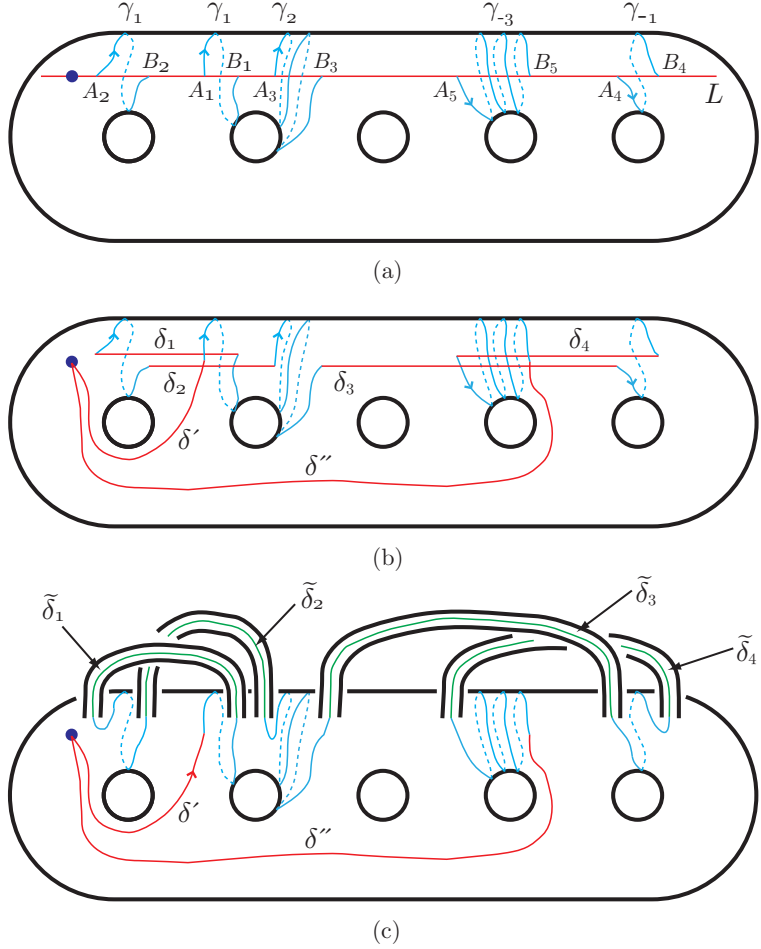}
     \caption{Construction of $R_i$ on $\Sigma_{n+d-1}$ for $r_i=a_2a_1a_2^2a_5^{-1}a_4^{-3}$ and for $n=5$.}
     \label{curveR}
\end{figure}

Let $\gamma_{m_t}=t_{\alpha_t}^{-m_t}(\zeta_t)$, where $\zeta_t$ is the subarc of $L$ from the point $A_j$ to the point $B_j$. 
For each $1\leq j\leq d-1$, let denote by $\delta_j$ the subarc of $L$ from the point $B_j$ to the point $A_{j+1}$. 
Then, we can define an arc $\beta$ on $\Sigma_n$ connecting $A_1$ to $B_d$ to be 
\begin{align*}
\beta=\gamma_{m_1} \star \delta_1 \star \gamma_{m_2} \star \delta_2 \star \cdots \star \delta_{d-1} \star \gamma_{m_d}, 
\end{align*}
where $\gamma \star \delta$ denotes an arc $\gamma$ followed by an arc $\delta$. 
Let $\delta_0$ (resp. $\delta_d$) be the subarc of $L$ from the base point to $A_1$ (resp. from $B_d$ to the base point). 
Then, $\delta_0 \star \beta \star \delta_d$ represents $r_i$ (cf. Figure~\ref{curveR} (a)). 
After perturbing $\beta$ slightly, we assume that $\delta_1,\ldots,\delta_{d-1}$ are pairwise disjoint and lie parallel to the plane $x=0$. 
Note that all self-intersection points of $\delta_0 \star \beta \star \delta_d$ lie on $\delta_0 \cup \delta_1 \cup \cdots \cup \delta_d$.

Let $\delta^\prime$ (resp. $\delta^{\prime\prime}$) be an arc from the base point to $A_1$ (resp. from $B_d$ to the base point) 
which is disjoint from $\alpha_1,\alpha_2,\ldots,\alpha_d$ and $b_1^\prime,b_2^\prime,\ldots,b_n^\prime$ and lies in the space $\{z\leq \xi\}$. 
Suppose that the interiors of $\delta^\prime$, $\delta^{\prime\prime}$ and $\beta$ are pairwise disjoint. 
Then, the loop $\delta^\prime \star \beta \star \delta^{\prime\prime}$ represents 
\begin{align*}
b_1b_2\cdots b_{i_1-1}r_i b_{i_d}^{-1}\cdots b_2^{-1}b_1^{-1}
\end{align*}
in $\pi_1(\Sigma_n)$ (cf. Figure~\ref{curveR} (b)).

Let $D_1,D_1^\prime,\ldots,D_{d-1},D_{d-1}^\prime$ be pairwise disjoint disks on $\Sigma_n$ such that for each $1\leq t\leq d-1$,
$\mathrm{Int}(D_t)$ and $\mathrm{Int}(D_t^\prime)$ are disjoint from $\delta^\prime$, $\beta$ and $\delta^{\prime\prime}$, and $A_t \in \partial D_t$ and $B_t \in \partial D_t^\prime$. 
We remove $2d-2$ open disks $\mathrm{Int}(D_t)$ and $\mathrm{Int}(D_t^\prime)$ from $\Sigma_n$. 
Then, for each $1\leq t\leq d-1$, by attaching an annulus, denote by $\mathcal{A}_t$, to the surface 
\begin{align*}
\displaystyle \Sigma_n \setminus \bigcup_{t=1}^{d-1} (\mathrm{Int}(D_t)\cup \mathrm{Int}(D_t^\prime))
\end{align*} 
along $\partial D_t$ and $\partial D_t^\prime$, we obtain the closed oriented surface 
\begin{align*}
\displaystyle \left(\Sigma_n \setminus \bigcup_{t=1}^{d-1} (\mathrm{Int}(D_t)\cup \mathrm{Int}(D_t^\prime))\right) \cap \left(\bigcup_{t=1}^{d-1}\mathcal{A}_t\right)
\end{align*} 
of genus $n+d-1$, denoted by $\Sigma_{n+d-1}$. 
An orientation on $\Sigma_{n+d-1}$ is given by the orientation on $\Sigma_n$.

We define a loop $R_i$ on $\Sigma_{n+d-1}$ as follows. 
For each $1\leq t \leq d-1$, let $\widetilde{\delta}_t$ be a simple arc on $\mathcal{A}_t$ from the point $B_t$ to the point $A_{t+1}$ 
such that $\widetilde{\delta}_t$ lies parallel to the plane $x=0$. 
Then, by ``replacing" $\delta_t$ in $\delta^\prime \star \beta \star \delta^{\prime\prime}$ by $\widetilde{\delta}_t$, we obtain the loop 
\begin{align*}
R=\delta^\prime \star \gamma_{m_1} \star \widetilde{\delta}_1 \star \gamma_{m_2} \star \widetilde{\delta}_2 \star \cdots \star \widetilde{\delta}_{d-1} \star \gamma_{m_d} \star \delta^{\prime\prime}. 
\end{align*}
In particular, $R_i$ is simple on $\Sigma_{n+d-1}$ (cf. Figure~\ref{curveR} (c)).

Note that from construction, $\widetilde{\delta}_t \star \delta_t$ is a simple closed curve on $\Sigma_{n+d-1}$. 
If we collapse each $\mathcal{A}_t$ onto the arc $\delta_t$, then we obtain a map $\Sigma_{n+d-1}\to \Sigma_n$ and the induced map $\pi_1(\Sigma_{n+d-1}) \to \pi_1(\Sigma_n)$ takes $[R]$ to $b_1b_2\cdots b_{i_1-1}r_i b_{i_d}^{-1}\cdots b_2^{-1}b_1^{-1}$, 
and $b_1b_2\cdots b_{i_1-1}r_i b_{i_d}^{-1}\cdots b_2^{-1}b_1^{-1}$ is mapped to $r_i$ under the map $\pi_1(\Sigma_n)\to\pi_1(\Sigma_n)$ which maps $a_j$ to $a_j$, and $b_j$ to $1$ for all $j$.

Let $h=n+l-1$, where $l=\max_{1\leq i\leq k}\{l(r_i)\}$. 
For each $1\leq i \leq k$, we now construct a loop $R_i$ on $\Sigma_h$ as follows. 
First, by sliding $\mathcal{A}_1,\ldots,\mathcal{A}_{l(r_i)-1}$, we deform the surface $\Sigma_{n+l(r_i)-1}$ into the standard position as shown in Figure~\ref{fundamental} 
in such a way that the simple loop $\widetilde{\delta}_t \star \delta_t$ becomes isotopic to $b_{n+t}$ 
and the boundary curves of $\mathcal{A}_t$ become isotopic to $a_{n+t}$ (cf. Figure~\ref{embedded} (a), (b) and (c)). 
If $l(r_j)=l$ for some $j$, then we see that the simple closed curve $a_h$ intersects $R_j$ transversely at one point. 
Therefore, we assume that $l(r_i)<l$. 
Next, we remove a small open disk from the deformed surface near $a_{n+l(r_i)-1}$ and disjoint from $R_i$ (cf. Figure~\ref{embedded} (d)). 
Thus, we obtain a surface of genus $n+l(r_i)-1$ with one boundary component, denoted by $\Sigma_{n+l(r_i)-1}^1$. 
We embed $\Sigma_{n+l(r_i)-1}^1$ into the standard surface $\Sigma_h$ in such a way that 
for each $1\leq t\leq n+l(r_i)-1$, simple loops $a_t,b_t$ on $\Sigma_{n+l(r_i)-1}^1$ correspond to the simple loops $a_t,b_t$ on $\Sigma_h$ (cf. Figure~\ref{embedded} (e)). 
Finally, we replace $R_i$ with a simple representative of $[R_i]\{(b_1b_2\cdots b_{h-1})(b_1b_2\cdots b_h)^{-1}\}^{\epsilon}$, where $\epsilon=\pm 1$ (cf. Figure~\ref{embedded} (d)). 
Then, we see that the resulting simple loop $R_i$ intersects $a_h$ transversely at one point.

From the above construction, $[R_i]$ is mapped to $r_i$ under the map $\Phi:\pi_1(\Sigma_h)\to \pi_1(\Sigma_n)$ for each $i=1,\ldots,k$.

This finish the proof of Proposition~\ref{prop5.1}.
\end{proof}
\begin{figure}[hbt]
 \centering
     \includegraphics[width=10cm]{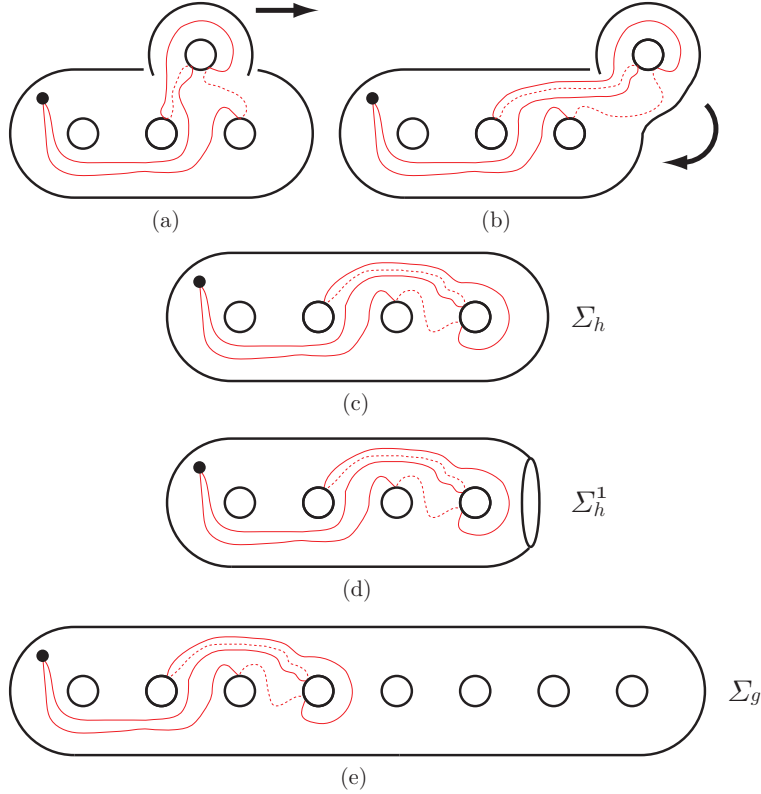}
     \caption{Construction of $R_i$ for $r_i=a_3^{-1}a_2^{-1}$ in the case $n=3$ and $g=8$.}
     \label{embedded}
\end{figure}

\end{document}